\theoremstyle{definition}
\newtheorem{definition}{Definition}[section]
\theoremstyle{plain}
\newtheorem{theorem}[definition]{Theorem}
\newtheorem{proposition}[definition]{Proposition}
\newtheorem{lemma}[definition]{Lemma}
\newtheorem{corollary}[definition]{Corollary}
\theoremstyle{remark}
\newtheorem{remark}[definition]{Remark}
\numberwithin{equation}{section}
\newcommand{\Y}{\mathrm{Y}}
\newcommand{\YR}{\mathrm{Y}}
\newcommand{\YD}{\widehat{\mathrm{Y}}}
\newcommand{\U}{\mathrm{U}}
\begin{document}
\title{A Drinfeld Presentation of the Queer Super-Yangian}
\author{Zhihua Chang ${}^1$ and Yongjie Wang ${}^{2}$}
\maketitle

\begin{center}
\footnotesize
\begin{itemize}
\item[1] School of Mathematics, South China University of Technology, Guangzhou, 510640, China.
\item[2] School of Mathematics, Hefei University of Technology, Hefei, Anhui, 230009, China.
\end{itemize}
\end{center}

\begin{abstract} 
We introduce a Drinfeld presentation for the super-Yangian $\mathrm{Y}(\mathfrak{q}_n)$ associated with the queer Lie superalgebra $\mathfrak{q}_n$. The Drinfeld generators of $\mathrm{Y}(\mathfrak{q}_n)$ are obtained through a block Gauss decomposition of the generator matrix in its RTT presentation, and the Drinfeld relations are explicitly computed by utilizing a block version of its RTT relations. As a byproduct, we obtain a new expression for the central series of $\mathrm{Y}(\mathfrak{q}_n)$ in terms of Gauss generators.
\bigskip

\noindent\textit{MSC(2020):} 17B37, 16T20, 20G42.
\bigskip

\noindent\textit{Keywords:} Super-Yangian; Queer Superalgebra; Drinfeld Presentation; Gauss Decomposition; Center. 
\end{abstract}

\section{Introduction}

The Yangian first appeared in the study of quantum inverse scattering methods by Faddeev and the St. Petersburg school. It was named Yangian by V. Drinfeld in his paper in 1985 \cite{D85} in honor of the famous physicist Chen-Ning Yang. Yangian is the quantum deformation of the universal enveloping algebra of the current Lie algebra $\mathfrak{g}[u]$ associated with a finite-dimensional complex Lie algebra $\mathfrak{g}$. As a Hopf algebra, the Yangian $\mathrm{Y}(\mathfrak{g})$ of a complex simple Lie algebra $\mathfrak{g}$ possesses both algebraic and co-algebraic structures, which are realized through generators and defining relations \cite{D88}. We also refer to \cite{M07, N94} and the references therein for more results on Yangians.
\medskip

Yangians usually possess three distinct realizations: the Drinfeld-Jimbo realization, the Drinfeld new realization, and the RTT realization. These realizations are analogous to the Chevalley-Serre realization, the loop realization, and the matrix realization of affine Lie algebras, respectively. It is accepted among experts that the Yangians defined by these three realizations are isomorphic to each other, but their proofs are quite complicated. Inspired by the work of Ding and Frenkel in \cite{DF93} on the quantum affine Lie algebra of type $A$, Brundan and Kleshchev proved the equivalence between the RTT realization and the Drinfeld new realization of the Yangian $\mathrm{Y}(\mathfrak{gl}_n)$ by using the Gauss decomposition in \cite{BK05}. Recently, this technique has also been successfully employed to establish an isomorphism between the RTT realization and the Drinfeld new realization of Yangians in types $B, C$ and $D$, as well as twisted Yangians in \cite{JLM18, LWZ23, LWZ24}. Simultaneously, Guay, Regelskis, and Wendlandt have also provided an independent proof of the isomorphism among the three presentations of orthogonal and symplectic Yangians \cite{GRW18}.
\medskip

Associated with the generalization to supersetting, many researchers have investigated the super-Yangians associated with finite-dimensional complex Lie superalgebras in recent years.  In 1991, Nazarov defined the super-Yangian $\mathrm{Y}(\mathfrak{gl}_{m|n})$ of $\mathfrak{gl}_{m|n}$ and conjectured that its central generators can be represented by the quantum Berezinian in \cite{N91}, which has been proven by Gow \cite{G05, G07}. Zhang classified the finite-dimensional irreducible representations of the super-Yangian algebra $\mathrm{Y}(\mathfrak{gl}_{1|1})$ and determined their corresponding Drinfeld polynomials in \cite{Zh95, M23b}, subsequently completing the classification of the finite-dimensional irreducible representations of the super-Yangian $\mathrm{Y}(\mathfrak{gl}_{m|n})$ in \cite{Zh96}. Based on Brundan and Kleshchev's work, Gow and Peng obtained the Drinfeld presentation and parabolic presentation of the super-Yangian $\mathrm{Y}(\mathfrak{gl}_{m|n})$ in \cite{G07} and \cite{P11, P16}, respectively.
\medskip

The ortho-symplectic super-Yangian $\mathrm{Y}(\mathfrak{osp}_{m|2n})$ was introduced in \cite{AACFR03} via the RTT presentation, whose Drinfeld realization was recently obtained in \cite{M24a}. Consequently, Molev and Ragoucy have classified the finite-dimensional irreducible representations of $\mathrm{Y}(\mathfrak{osp}_{m|2n})$ in terms of Drinfeld polynomials in a series of papers \cite{M23a, M23b, M24a, MR24a, MR24b}.
Analogous to the twisted Yangian of types $B, C$ and $D$, Briot and Ragoucy have also introduced the twisted super-Yangian $\mathrm{Y}^{\mathrm{tw}}(\mathfrak{osp}_{m|2n})$ as fixed-point subalgebras of the super-Yangian $\mathrm{Y}(\mathfrak{gl}_{m|2n})$ under finite-order automorphisms. The finite-dimensional irreducible representations of $\mathrm{Y}^{\mathrm{tw}}(\mathfrak{osp}_{m|2n})$ were classified in terms of Drinfeld polynomials, as discussed in \cite{BR03}. These subalgebras are usually not Hopf algebras, but rather left coideal subalgebras of $\mathrm{Y}(\mathfrak{gl}_{m|2n})$. 
Recently, Lin, Zhang, and the second author of this article have investigated the quantum Berezinian of the twisted super-Yangian $\mathrm{Y}^{\mathrm{tw}}(\mathfrak{osp}_{m|2n})$ in \cite{LiWZ24}, and obtained the center of the twisted super-Yangian. This discovery enables the definition of the special twisted super-Yangian, which is isomorphic to the quotient of the twisted super-Yangian by its center. 
\medskip

Besides the basic classical Lie superalgebras, there are two families of strange Lie superalgebras, queer Lie superalgebras and periplectic Lie superalgebras, according to Kac's classification of the finite-dimensional complex simple Lie superalgebras \cite{Kac77}. 
Let $V=\mathbb{C}^{n|n}$ be a $2n$-dimensional vector superspace, and let $\zeta\in\mathrm{End}(V)$ be an odd linear map such that $\zeta^2=-1$. Then the queer Lie superalgebra $\mathfrak{q}_n$ is the centralizer subalgebra of $\zeta$ in the general linear superalgebra $\mathfrak{gl}(V)$. It is one of the strange Lie superalgebras, which can be realized as the following $n|n$-block matrices: 
$$\left\{\begin{pmatrix}
A&B\\
B&A
\end{pmatrix}| A \text{ and }B \text{ are }n\times n\text{ matrices}\right\}.$$
The queer Lie superalgebra has a unique structure that distinguishes it from other Lie superalgebras. For instance, the Cartan subalgebra $\mathfrak{h}$ is a solvable but non-abelian Lie superalgebra, which partly leads to the existence of its queer nature, including the Clifford module structure on the highest weight space of an irreducible highest weight $\mathfrak{q}_n$-mdoule $V(\lambda)$ \cite{CW12}. It does not possess an even nondegenerate invariant bilinear form, but admits an odd nondegenerate invariant bilinear form. The $r$-matrix in $\mathfrak{q}_n^{\otimes2}$ does not satisfy the classical Yang-Baxter equation. This highly peculiar phenomenon has resulted in its quantum deformation $\mathrm{U}_q(\mathfrak{q}_n)$, the affinization of the twisted affine queer superalgebra, and the queer super-Yangian $\mathrm{Y}(\mathfrak{q}_n)$, see \cite{CG12, LMZ25, N99, Ol92} and the references therein. 
\medskip

Due to some unique properties, the super-Yangian associated with a Lie superalgebra of type $P$ or $Q$ cannot be defined as a deformation of the universal enveloping algebra of the polynomial current Lie superalgebra in the class of Hopf algebras. However, Nazarov introduced the super-Yangians of the strange Lie superalgebras in \cite{N92}, which are deformations of the universal enveloping algebra of the twisted polynomial current Lie superalgebra; more details can be found in \cite{N99,N24}.
\medskip

Nazarov investigated the center and Drinfeld double of the super-Yangians $\mathrm{Y}(\mathfrak{q}_n)$ and $\mathrm{Y}(\mathfrak{p}_n)$ in \cite{N99} and \cite{N24}, respectively. He also constructed a class of finite-dimensional irreducible representations of $\mathrm{Y}(\mathfrak{q}_n)$ via the representations of degenerate affine Sergeev algebras. Furthermore, the centralizer construction of $\mathrm{Y}(\mathfrak{q}_n)$ was obtained by Nazarov and Sergeev in \cite{NS06}. Thus, these super-Yangians were also realized as limits of certain centralizers in the universal enveloping algebras of type $Q$. 

Analogously to the role of Clifford algebras in the representation theory of queer Lie superalgebras, the queer super-Yangian $\mathrm{Y}(\mathfrak{q}_1)$ plays a significant role in investigating the queer super-Yangian $\mathrm{Y}(\mathfrak{q}_n)$. Numerous related results revolve around this special case, including the quantum Berezinian \cite{N22} and its connection with finite $W$-superalgebras for any principal nilpotent element \cite{Po22, Po23, PS16, PS17, PS21}. However, based on the authors' knowledge, the classification of finite-dimensional representations of the queer super-Yangian has not been established up to this point. One of the main obstacles to this is the absence of the Drinfeld presentation. In this paper, we present a Drinfeld realization of the super-Yangian $\mathrm{Y}(\mathfrak{q}_n)$ and express the central series in terms of Gauss generators. Surprisingly, the central series $z(u)$ of the super-Yangian $\mathrm{Y}(\mathfrak{q}_n)$ is precisely the product of $n$ copies of the central series of $\mathrm{Y}(\mathfrak{q}_1)$, as detailed in  Theorem \ref{thm:ct} for more details. Consequently, we express the central series in terms of the quantum Berezinian defined by Nazarov in \cite{N22}.
\medskip

Our approach to obtaining a Drinfeld presentation of the super-Yangian $\mathrm{Y}(\mathfrak{q}_n)$ takes advantage of a block version of the Gauss decomposition, which is inspired by \cite{BK05}. We rearrange a matrix of type $Q$ such that the rows and columns of the matrix are indexed by the numbers $1,-1,2,-2,\ldots,n,-n$. This allows us to view the matrix as an $n\times n$-block matrix with each entry in $Q(1)$. Using $\mathfrak{q}_2$ as an illustrative example, we consider the matrix
$$\left(\begin{array}{cc|cc}
a_{11}&a_{12}&b_{-1,1}&b_{-1,2}\\
a_{21}&a_{22}&b_{-2,1}&b_{-2,2}\\
\hline
b_{-1,1}&b_{-1,2}&a_{11}&a_{12}\\
b_{-2,1}&b_{-2,2}&a_{21}&a_{22}
\end{array}\right)
$$
which can be rearranged as 
$$\left(\begin{array}{cc|cc}
a_{11}&b_{-1,1}&a_{12}&b_{-1,2}\\
b_{-1,1}&a_{11}&b_{-1,2}&a_{12}\\ 
\hline
a_{21}&b_{-2,1}&a_{22}&b_{-2,2}\\
b_{-2,1}&a_{21}&b_{-2,2}&a_{22}
\end{array}\right).$$

The advantage of this arrangement is that the quasi-diagonal block matrices form a Cartan subalgebra $\mathfrak{h}$ of the queer Lie superalgebra $\mathfrak{q}_2$. Moreover, the positive (negative) part of $\mathfrak{q}_2$ with respect to $\mathfrak{h}$ consists of the upper (lower) quasi-triangular block matrices. Inspired by this point of view, we rearrange the generator matrix $T(u)$ of the queer super-Yangian $\Y(\mathfrak{q}_n)$ such that it can be viewed as an $n\times n$ block matrix, whose entries are $2\times 2$-matrix of a certain form. Then the block Gauss decomposition will yield a family of appropriate Drinfeld generators.
\medskip

In order to effectively calculate the relations among these Drinfeld generators, we reformulate the RTT relation for the queer super-Yangian in block form. Combining this with the Gauss decomposition and the embedding theorem of the Yangian $\mathrm{Y}(\mathfrak{gl}_n)$, we deduce our main theorem (Theorem \ref{thm:isomorphism}) which asserts the equivalence between the Drinfeld presentation and the RTT presentation of $\YR(\mathfrak{q}_n)$.
\medskip

We have observed that Stukopin introduced a Drinfeld presentation of $\mathrm{Y}(\mathfrak{q}_1)$ in \cite{S20} and conjectured a Drinfeld realization of the general case $\mathrm{Y}(\mathfrak{q}_n)$ in \cite{S18}. His approach takes a complete Gauss decomposition of the generator matrix in the RTT presentation of $\Y(\mathfrak{q}_n)$, which leads to a triangular decomposition of $\Y(\mathfrak{q}_n)$ with a purely even diagonal part. Our approach employs a block Gauss decomposition instead and yields a different triangular decomposition, in which the diagonal part contains odd elements as well. As a corollary, we derive the central series $z(u)$ obtained by Nazarov \cite[Proposition 3.1]{N99} in terms of Drinfeld generators. Based on these facts, we believe that such a new triangular decomposition is advantageous for investigating the highest weight modules of $\Y(\mathfrak{q}_n)$.
\medskip

Further directions: The Drinfeld presentation of $\Y(\mathfrak{q}_n)$ leads to its triangular decomposition, which motivates subsequent investigations on its highest weight representations. An immediate problem is to classify finite-dimensional irreducible highest weight modules of $\mathrm{Y}(\mathfrak{q}_n)$ in terms of Drinfeld polynomials. We will address this problem in upcoming papers. 
\medskip

Brundan and Kleshchev established an isomorphism between the truncated shifted Yangian $\mathrm{Y}_{n,l}(\sigma)$ and the finite $W$-algebra associated to any nilpotent element of the general linear Lie algebra $\mathfrak{gl}_N$ in the literature. This allows for the derivation of generators and relations for the finite $W$-algebras associated with nilpotent matrices and investigating the highest weight theory for shifted Yangians and finite $W$-algebras, see \cite{BK06, BK08}. This profound connection has recently been extended to the case of other classical Lie algebras by Lu, Peng, Tappeiner, Toply, and Wang in \cite{LPTTW25}.
Meanwhile, the theory has been generalized to the finite $W$-superalgebra associated to any even nilpotent matrix in the general linear Lie superalgebra by Peng \cite{P21}.  The following question for the queer super-Yangian $\mathrm{Y}(\mathfrak{q}_n)$ is natural and fundamental compared to Yangian $\mathrm{Y}(\mathfrak{gl}_n)$: What is the relationship between the parabolic presentation of $\mathrm{Y}(\mathfrak{q}_n)$ and the finite $W$-superalgebra of type $Q$. 
\medskip

We acknowledge that Poletaeva and Serganova have made significant contributions in this area. More details can be found in \cite{Po22, Po23, PS17, PS21}, which primarily focus on the specific case of $\mathfrak{q}_1$. However, the more general case remains unknown. 
Since the parabolic realization plays a crucial role in establishing the connection between finite $W$-algebras and truncated shifted Yangians, we will proceed to construct the parabolic realization of $\Y(\mathfrak{q}_n)$. We anticipate that our Drinfeld realization and its parabolic counterpart will provide a more comprehensive investigation of the finite $W$-superalgebra of type $Q$. Notably, the parabolic presentations for twisted Yangians of types AI and AII, as well as for the extended Yangian  of types $B$ and $C$, have already been established in \cite{LPTTW25} and \cite{CJLM25}, respectively. 
\medskip

The paper is organized as follows. In Section \ref{se:Yqdef}, we review some basic facts related to the super-Yangian $\mathrm{Y}(\mathfrak{q}_n)$, and rewrite the RTT relation for $\mathrm{Y}(\mathfrak{q}_n)$ in block form. In Section \ref{se:Gaussdec}, we establish the Gauss decomposition and obtain the Drinfeld generators for $\Y(\mathfrak{q}_n)$. An embedding theorem for the super-Yangian $\mathrm{Y}(\mathfrak{q}_n)$ is proved in Section \ref{se:Embedding}, which reduces the calculation of the Drinfeld relations to the lower-rank case.  In Sections \ref{se:q2relation} and \ref{se:serrerelation}, we explicitly calculate the relations among the Drinfeld generators. We state and prove our main theorem about the equivalence between the Drinfeld presentation and the RTT presentation in Section \ref{se:Maintheorem}. In the last Section \ref{se:qCenter}, we transform the central series $z(u)$ obtained by Nazarov in terms of Drinfeld generators. 
\medskip

{\bf Notations and terminologies:}
Throughout the paper,  the symbols $\mathbb{Z}$ and $\mathbb{N}$ represent the sets of integers and non-negative integers, respectively. All vector spaces and algebras are assumed to be defined over the field $\mathbb{C}$ of complex numbers. The Kronecker delta $\delta_{ij}$ is equal to $1$ if $i=j$ and $0$ otherwise.
We write $\mathbb{Z}_2=\left\{\bar{0},\bar{1}\right\}$. For a homogeneous element $x$ of an associative or Lie superalgebra, we use $|x|$ to denote the degree of $x$ with respect to the $\mathbb{Z}_2$-grading. When we write $|x|$ for an element $x$,  we always assume that $x$ is a homogeneous element and automatically extend the relevant formulas by linearity (when applicable). All modules of Lie superalgebras are assumed to be $\mathbb{Z}_2$-graded. The tensor product of two superalgebras $A$ and $B$ carries a superalgebra structure given by 
$$(a_1 \otimes b_1)\cdot(a_2\otimes b_2) =(-1)^{|a_2||b_1|}a_1a_2\otimes b_1b_2,$$
for homogeneous elements $a_i\in A, b_i\in B$ with $i=1,2$. Any $N\times N$-matrix $X=[X_{ij}]$ with entries in an associative (super)algebra $\mathcal{A}$ will be regarded as the element 
$$X=\sum\limits_{i,j=1}\mathsf{E}_{ij}\otimes X_{ij}\in\mathrm{End}(\mathbb{C}^N)\otimes\mathcal{A},$$
where $\mathsf{E}_{ij}$ denotes the standard matrix unit. We will need tensor product superalgebras of the form
$\mathrm{End}(\mathbb{C}^N)^{\otimes m}\otimes \mathcal{A}$. For any $1\leqslant a\leqslant m$, we will denote by $X^a$ the element associated with $a$-th  copy of $\mathrm{End}(\mathbb{C}^N)$ so that 
$$X^a=\sum\limits_{i,j=1}^N1^{\otimes (a-1)}\otimes \mathsf{E}_{ij}\otimes 1^{\otimes (m-1)}\otimes X_{ij}\in\mathrm{End}(\mathbb{C}^N)^{\otimes m}\otimes \mathcal{A}.$$ 

\vskip 0.3cm
{\bf  Acknowledgment}.  The authors are partially supported by the NSF of China (Grant 12471025). The second author is also supported by the Anhui Provincial Natural Science Foundation 2308085MA01, and the Fundamental Research Funds for the Central Universities JZ2025HGTG0251. 

\section{The Queer Super-Yangian}
\label{se:Yqdef}

In this section, we will review some fundamental facts for the twisted queer current Lie superalgebra $\widehat{\mathfrak{q}}^{tw}_n[u]$ and the queer super-Yangian $\mathrm{Y}(\mathfrak{q}_n)$ to fix our notations. We refer to \cite{N99} for more details.   

Let $n$ be a positive integer, and $I_{n|n}$ be the set $\{-n,\ldots,-1,1,\ldots,n\}$, whose element $i$ is of parity $|i|:=\bar{0}$ if $i>0$ and $\bar{1}$ if $i<0$, respectively. Let $\mathbb{C}^{n|n}$ be the $\mathbb{Z}_2$-graded vector space with the standard basis consisting of $v_i$ of parity $|i|$ for $i\in I_{n|n}$. As usual, the algebras $\mathrm{End}(\mathbb{C}^{n|n})$ and $\mathfrak{gl}_{n|n}$ denote the associative superalgebra and the Lie superalgebra of all $2n\times 2n$-matrices, respectively, in which the $(i,j)$-matrix unit $\mathsf{E}_{ij}$ is of parity $|i|+|j|$. \textit{The queer Lie superalgebra $\mathfrak{q}_n$} is the Lie sub-superalgebra of $\mathfrak{gl}_{n|n}$ consisting of elements that are fixed by the involutive automorphism $\eta(\mathsf{E}_{ij})=\mathsf{E}_{-i,-j}$. It is generated by
$$\mathsf{g}_{ij}:=\mathsf{E}_{ij}+\mathsf{E}_{-i,-j}\text{ for }i,j\in I_{n|n},$$
which satisfy the relations $\mathsf{g}_{ij}=\mathsf{g}_{-i,-j}$, and
\begin{equation}
[\mathsf{g}_{ij},\mathsf{g}_{kl}]=\delta_{kj}\mathsf{g}_{il}+\delta_{j,-k}\mathsf{g}_{i,-l}-(-1)^{(|i|+|j|)(|k|+|l|)}\delta_{li}\mathsf{g}_{kj}-(-1)^{(|i|+|j|)(|k|+|l|)}\delta_{l,-i}\mathsf{g}_{k,-j},
\label{eq:qncommutator}
\end{equation}
for $i,j\in I_{n|n}$. The relations in \eqref{eq:qncommutator} are also equivalently written as a single ternary relation in $\mathrm{End}\left(\mathbb{C}^{n|n}\right)\otimes\mathrm{End}\left(\mathbb{C}^{n|n}\right)\otimes\mathrm{U}(\mathfrak{q}_n)$:
\begin{align*}
    &\left[\mathsf{G}^1,\mathsf{G}^2\right]=\mathsf{G}^2(\mathsf{P}+\mathsf{Q})-(\mathsf{P}+\mathsf{Q})\mathsf{G}^2,
\end{align*}
 where $\mathsf{G}=\sum\limits_{i,j\in I_{n|n}}(-1)^{|j|}\mathsf{E}_{ij}\otimes\mathsf{g}_{ji}$, $\mathsf{P}=\mathop{\sum}\limits_{i,j}(-1)^{|j|}\mathsf{E}_{ij}\otimes \mathsf{E}_{ji}$, and $\mathsf{Q}=\mathop{\sum}\limits_{i,j}(-1)^{|j|}\mathsf{E}_{ij}\otimes \mathsf{E}_{-j,-i}$.
 
The involution $\eta$ can be extended to the current Lie superalgebra $\mathfrak{gl}_{n|n}[\xi]$ in the usual sense. Then the \textit{twisted queer current Lie superalgebra} is defined to be
$$\widehat{\mathfrak{q}}^{tw}_n=\{X(\xi)\in\mathfrak{gl}_{n|n}[\xi]|\ \eta(X(\xi))=X(-\xi)\}.$$
The Lie superalgebra $\widehat{\mathfrak{q}}_n^{tw}$ is generated by elements
$\mathsf{g}_{ij}^{(r)}:=\mathsf{E}_{ij} \xi^r+(-1)^r\mathsf{E}_{-i,-j} \xi^{r}$ of parity $|i|+|j|$ for $i,j\in I_{n|n}$ and $r\in\mathbb{N}$. They satisfy the relations $\mathsf{g}_{ij}^{(r)}=(-1)^r\mathsf{g}_{-i,-j}^{(r)}$ and 
\begin{align*}
\left[\mathsf{g}_{ij}^{(r)}, \mathsf{g}_{kl}^{(s)}\right]=
&\left(\delta_{jk}\mathsf{g}_{il}^{(r+s)}+\delta_{j,-k}\mathsf{g}_{-i,l}^{(r+s)}(-1)^r\right)-\left(\delta_{il}\mathsf{g}_{kj}^{(r+s)}+\delta_{-i,l}\mathsf{g}_{k,-j}^{(r+s)}(-1)^r\right)(-1)^{(|i|+|j|)(|k|+|l|)},
\end{align*}
for $i,j,k,l\in I_{n|n}$ and $r,s\in\mathbb{N}$. 
Moreover, we write 
$$\mathsf{g}_{ij}(u)=\delta_{ij}+(-1)^{|j|}\sum\limits_{r\geqslant0}\mathsf{g}_{ij}^{(r)}u^{-r-1}\in\widehat{\mathfrak{q}}_n^{tw}[[u^{-1}]]$$ 
for $i,j\in I_{n|n}$, and $\mathsf{G}(u)=\sum\limits_{i,j\in I_{n|n}}(-1)^{|i|}\mathsf{E}_{ji}\otimes\mathsf{g}_{ij}(u)$. Then $\mathsf{g}_{-i,-j}(u)=\mathsf{g}_{ij}(-u)$ for $i,j\in I_{n|n}$ and
$$\left[\mathsf{G}^1(u),\mathsf{G}^2(v)\right]=
\mathsf{G}^2(v)\left(\frac{\mathsf{P}}{u-v}+\frac{\mathsf{Q}}{u+v}\right)-\left(\frac{\mathsf{P}}{u-v}+\frac{\mathsf{Q}}{u+v}\right)\mathsf{G}^2(v).$$
holds as an equality of formal Laurent series in $u^{-1}, v^{-1}$ with coefficients in $\mathrm{End}\left(\mathbb{C}^{n|n}\right)\otimes\mathrm{End}\left(\mathbb{C}^{n|n}\right)\otimes\mathrm{U}\left(\widehat{\mathfrak{q}}_n^{tw}\right)$ after multiplying each side by $u^2-v^2$.

\textit{The queer super-Yangian $\mathrm{Y}(\mathfrak{q}_n)$} is a $\mathbb{Z}_2$-graded unital associative superalgebra generated by $t_{ij}^{(r)}$ of parity $|i|+|j|$ with $i,j\in I_{n|n}$ and $r=1,2,\ldots$, subject to certain relations. In order to explicitly write them down, we assemble the generators into formal series
$$t_{ij}(u):=\sum\limits_{r\geqslant0}t_{ij}^{(r)}u^{-r}\in \mathrm{Y}(\mathfrak{q}_n)[[u^{-1}]], $$
where $t_{ij}^{(0)}:=\delta_{ij}$. They are further encapsulated into a $2n\times2n$-matrix
$$T(u)=\sum\limits_{i,j\in I_{n|n}}\mathsf{E}_{ij}\otimes t_{ij}(u).$$ 
Let
\begin{equation*}
R(u,v):=1
-\frac{1}{u-v}\sum\limits_{i,j\in I_{n|n}}(-1)^{|j|}\mathsf{E}_{ij}\otimes \mathsf{E}_{ji}
-\frac{1}{u+v}\sum\limits_{i,j\in I_{n|n}}(-1)^{|j|}\mathsf{E}_{ij}\otimes \mathsf{E}_{-j,-i}.
\end{equation*}
Then the defining relations of $\mathrm{Y}(\mathfrak{q}_n)$ can be written as
\begin{eqnarray}
t_{i,j}(-u)&=&t_{-i,-j}(u),\qquad\quad\text{ for }i,j\in I_{n|n},\label{eq:tijsym}\\
R(u,v)T^1(u)T^2(v)&=&T^2(v)T^1(u)R(u,v),\label{eq:RTT}
\end{eqnarray}
where \eqref{eq:RTT} holds in $\mathrm{End}(\mathbb{C}^{n|n})^{\otimes 2}\otimes \mathrm{Y}(\mathfrak{q}_n)[[u^{-1}, v^{-1}]]$. The relation \eqref{eq:RTT} is also equivalent to
\begin{equation}
\begin{aligned}
\theta(i,k,l)[t_{ij}(u),t_{kl}(v)]
=&\frac{1}{u-v}(t_{kj}(u)t_{il}(v)-t_{kj}(v)t_{il}(u))\\
&-\frac{(-1)^{|k|+|l|}}{u+v}(t_{-k,j}(u)t_{-i,l}(v)-t_{k,-j}(v)t_{i,-l}(u)),
\end{aligned}
\label{eq:RTTexp}
\end{equation}
where $\theta(i,k,l)=(-1)^{|i||k|+|k||l|+|l||i|}$. 

We also denote the inverse matrix of $T(u)$ by $\widetilde{T}(u)=\sum\limits_{i,j\in I_{n|n}}\mathsf{E}_{ij}\otimes \widetilde{t}_{ij}(u)$, which satisfies
\begin{equation}
T^1(u)R^{12}(u,v)\widetilde{T}^2(v)=\widetilde{T}^2(v)R^{12}(u,v)T^1(u).\label{eq:TRT}
\end{equation}
It is equivalent to 
 \begin{align*}
 &\theta(i,j,l)\left[t_{ij}(u),\widetilde{t}_{kl}(v)\right]\\
 =&\frac{1}{u-v}\left(\delta_{jk}\sum\limits_{p\in I_{n|n}}(-1)^{|p|}\theta(p,i,l)t_{ip}(u)\tilde{t}_{pl}(v)-\delta_{il}\sum\limits_{p\in I_{n|n}}(-1)^{|p|}\theta(p,j,k)\widetilde{t}_{kp}(v)t_{pj}(u)\right)\\
 &+\frac{1}{u+v}\left((-1)^{|j|}\delta_{j,-k}\sum\limits_{p\in I_{n|n}}\theta(p,i,l)t_{ip}(u)\widetilde{t}_{-p,l}(v)-(-1)^{|i|}\delta_{i,-l}\sum\limits_{p\in I_{n|n}}\theta(p,j,k)\widetilde{t}_{k,-p}(v)t_{pj}(u)\right).
 \end{align*}
 \begin{lemma}\cite[Relations (2.12) and (2.13)]{N99}\label{lem:embeddingandevaluation}
 The assignment $$\mathrm{ev}:\ t_{ij}(u)\mapsto\delta_{ij}-\mathsf{g}_{ji}u^{-1}(-1)^{|j|}, i,j\in I_{n|n}$$
 defines a surjective homomorphism $\Y(\mathfrak{q}_n)\twoheadrightarrow\mathrm{U}(\mathfrak{q}_n)$, and 
 $$\iota:\ \mathbf{g}_{ji}\mapsto-t_{ij}^{(1)}(-1)^{|j|}, i,j\in I_{n|n}$$
 defines an embedding $\mathrm{U}(\mathfrak{q}_n)\hookrightarrow\Y(\mathfrak{q}_n)$.
 \end{lemma}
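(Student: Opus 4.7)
The plan is to establish both statements by direct verification: the defining relations of $\Y(\mathfrak{q}_n)$ are almost literally encoded in the defining relations of $\mathfrak{q}_n$ at the $u^{-1}v^{-1}$-level, so the argument amounts to carefully matching coefficients while keeping track of sign conventions.

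\textbf{Step 1 (The evaluation map $\mathrm{ev}$).} I would first check that $\mathrm{ev}$ respects the symmetry relation \eqref{eq:tijsym}. Substituting $t_{ij}(u) \mapsto \delta_{ij} - \mathsf{g}_{ji}u^{-1}(-1)^{|j|}$, the left-hand side produces $\delta_{ij}+\mathsf{g}_{ji}u^{-1}(-1)^{|j|}$ while the right-hand side produces $\delta_{ij}-\mathsf{g}_{-j,-i}u^{-1}(-1)^{|-j|}$; these coincide because $|-j|\equiv|j|+\bar{1}$ and $\mathsf{g}_{-j,-i}=\mathsf{g}_{ji}$. Next I would verify that $\mathrm{ev}$ respects the RTT relation in its expanded form \eqref{eq:RTTexp}. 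Inserting the evaluated series, the only nontrivial coefficient is that of $u^{-1}v^{-1}$ (all higher powers vanish); extracting it reduces the identity, after cancelling the scalar $\theta(i,k,l)$ on both sides, exactly to \eqref{eq:qncommutator}. Since the image of $\mathrm{ev}$ contains each $\mathsf{g}_{ji}$ up to a nonzero scalar, surjectivity is immediate.

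\textbf{Step 2 (The embedding $\iota$).} To see that $\iota$ is a well-defined algebra homomorphism, I would extract the coefficient of $u^{-1}v^{-1}$ from \eqref{eq:RTTexp} in $\Y(\mathfrak{q}_n)$: this yields precisely the bracket relation \eqref{eq:qncommutator} for the elements $-t_{ij}^{(1)}(-1)^{|j|}$, while the coefficient of $u^{-1}$ in \eqref{eq:tijsym} gives $t_{ij}^{(1)}=-t_{-i,-j}^{(1)}$, hence $\mathsf{g}_{ji}=\mathsf{g}_{-j,-i}$ on the nose. Thus $\iota$ factors through a well-defined homomorphism $\mathrm{U}(\mathfrak{q}_n)\to\Y(\mathfrak{q}_n)$.

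\textbf{Step 3 (Injectivity).} The cleanest way to conclude injectivity is to compute the composition $\mathrm{ev}\circ\iota$: one finds $\mathrm{ev}(\iota(\mathsf{g}_{ji}))=\mathrm{ev}(-t_{ij}^{(1)}(-1)^{|j|})=(-1)^{2|j|}\mathsf{g}_{ji}=\mathsf{g}_{ji}$, so $\mathrm{ev}\circ\iota=\mathrm{id}_{\mathrm{U}(\mathfrak{q}_n)}$, which forces $\iota$ to be injective.

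\textbf{Main obstacle.} The only genuinely delicate step is the sign bookkeeping in verifying \eqref{eq:RTTexp} under $\mathrm{ev}$: every term carries a sign from either $\theta(i,k,l)$, the $(-1)^{|j|}$ in the evaluation formula, or the $(-1)^{|k|+|l|}$ on the $\mathsf{Q}$-term, and one must confirm that the mixed $\mathsf{P}$- and $\mathsf{Q}$-contributions in \eqref{eq:RTTexp} assemble into both the $\delta_{jk}$-type and $\delta_{j,-k}$-type contributions in \eqref{eq:qncommutator} with the correct signs. Once this matching is done for a general tuple $(i,j,k,l)\in I_{n|n}^{4}$, the rest of the argument is formal.
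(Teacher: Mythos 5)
Your proposal is correct: the paper gives no proof of this lemma (it simply cites Nazarov's relations (2.12)–(2.13) in [N99]), and your direct verification — checking \eqref{eq:tijsym} and extracting the $u^{-1}v^{-1}$-coefficient of \eqref{eq:RTTexp} to recover \eqref{eq:qncommutator}, then concluding injectivity of $\iota$ from $\mathrm{ev}\circ\iota=\mathrm{id}$ — is exactly the standard argument behind the cited result. The sign bookkeeping you flag does work out (e.g.\ $\tfrac{u^{-1}-v^{-1}}{u-v}=-u^{-1}v^{-1}$ and $\tfrac{u^{-1}+v^{-1}}{u+v}=u^{-1}v^{-1}$ make both sides pure $u^{-1}v^{-1}$-multiples), so there is no gap.
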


\begin{lemma}\label{lem:antiinvolution}
The assignment $t_{ij}(u)\mapsto t_{ji}(u), i,j\in I_{n|n}$ defines an anti-involution $\omega$ of the associative algebra $\Y(\mathfrak{q}_n)$, that is a $\mathbb{C}$-linear map such that $\omega(ab)=\omega(b)\omega(a)$ for $a,b\in\Y(\mathfrak{q}_n)$.
\end{lemma}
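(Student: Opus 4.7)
The plan is to check that the assignment $\omega\colon t_{ij}(u) \mapsto t_{ji}(u)$ respects the two families of defining relations of $\Y(\mathfrak{q}_n)$, namely the symmetry \eqref{eq:tijsym} and the RTT relation \eqref{eq:RTT} (equivalently \eqref{eq:RTTexp}). The free associative superalgebra generated by $\{t_{ij}^{(r)}\}$ admits a unique anti-automorphism swapping $t_{ij}^{(r)}$ with $t_{ji}^{(r)}$; to prove the lemma it suffices to verify that this anti-automorphism descends to the quotient $\Y(\mathfrak{q}_n)$. The involutivity $\omega^2 = \mathrm{id}$ is then immediate from its action on generators.

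The symmetry \eqref{eq:tijsym} is handled trivially: applying $\omega$ to $t_{ij}(-u) = t_{-i,-j}(u)$ yields $t_{ji}(-u) = t_{-j,-i}(u)$, which is again an instance of \eqref{eq:tijsym}. The substantive verification is for the RTT relation in the form \eqref{eq:RTTexp}. The key algebraic input is the identity $\omega([a,b]) = [\omega(b),\omega(a)]$ for the super-commutator, a direct consequence of the anti-homomorphism property and the definition of the super-bracket. Applying $\omega$ to the left-hand side of \eqref{eq:RTTexp} and then using super-commutator antisymmetry
\[
[t_{lk}(v), t_{ji}(u)] = -(-1)^{(|l|+|k|)(|j|+|i|)}\,[t_{ji}(u), t_{lk}(v)]
\]
recasts it as a scalar multiple of $[t_{ji}(u), t_{lk}(v)]$. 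Applying $\omega$ term-by-term to the right-hand side (so that each product is reversed) yields, after a short rearrangement, precisely $-1$ times the right-hand side of the instance of \eqref{eq:RTTexp} obtained under the substitution $(i,j,k,l) \mapsto (j,i,l,k)$; the latter equals $-\theta(j,l,k)\,[t_{ji}(u), t_{lk}(v)]$.

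Matching the two expressions reduces the whole verification to the Koszul-sign identity
\[
\theta(i,k,l)\,(-1)^{(|l|+|k|)(|j|+|i|)} \;=\; \theta(j,l,k),
\]
which is routine: after expanding the exponent on the left and cancelling the terms $2|i||k|$ and $2|l||i|$ modulo $2$, both sides equal $(-1)^{|k||l|+|l||j|+|k||j|}$. The only real obstacle is the careful bookkeeping of super-signs — tracking the factor $\theta(i,k,l)$, the parity weights $(-1)^{|k|+|l|}$ on the $u+v$ term, the antisymmetry of the super-bracket, and the reordering of products — but no further algebraic input is needed beyond the RTT relation itself and parity arithmetic.
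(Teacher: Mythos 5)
Your proposal is correct and follows essentially the same route as the paper, which simply asserts that $\omega$ preserves the relations \eqref{eq:tijsym} and \eqref{eq:RTTexp}; you have carried out the sign verification that the paper omits, and your Koszul-sign identity $\theta(i,k,l)(-1)^{(|i|+|j|)(|k|+|l|)}=\theta(j,l,k)$ checks out.
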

\begin{proof}
$\omega$ is well-defined since it preserves the relations \eqref{eq:tijsym} and \eqref{eq:RTTexp}.
\end{proof}

There are two different ascending $\mathbb{Z}$-filtrations of $\mathrm{Y}(\mathfrak{q}_n)$ as defined in \cite{N99}. The first assigns degree $r$ to each  generator $t_{ij}^{(r)}$ for $r\geqslant 1$, yielding the associated graded algebra denoted by $\mathrm{gr}\mathrm{Y}(\mathfrak{q}_n)$. This algebra is supercommutative with free generators $t_{ij}^{(r)}$ and $t_{-i,j}^{(r)}$ (using the same notation without ambiguity), where $i,j= 1,\ldots,n$ and $r\in\mathbb{N}$. It leads to a PBW theorem for $\Y(\mathfrak{q}_n)$ as demonstrated in \cite[Corollary 2.4]{N99}.

The second filtration is defined by declaring that the generator $t_{ij}^{(r)}$  is of degree $r-1$ for each $r\geqslant 1$. The corresponding graded algebra, denoted by $\mathrm{gr}^{\prime}\mathrm{Y}(\mathfrak{q}_n)$,  is isomorphic to the universal enveloping algebra $\mathrm{U}(\widehat{\mathfrak{q}}^{tw}_n)$ of the twisted current Lie superalgebra $\widehat{\mathfrak{q}}_n^{tw}$, see \cite[Proposition 2.1]{N99}. Let $\pi: \Y(\mathfrak{q}_n)\rightarrow\mathrm{gr}^{\prime}\Y(\mathfrak{q}_n)$ be the canonical homomorphism. The isomorphism is explicitly given in the following theorem:

\begin{theorem} \cite[Theorem 2.3 ]{N99}\label{thm:Iso}
There is a superalgebra isomorphism from  $\mathrm{gr}^{\prime}\mathrm{Y}(\mathfrak{q}_n)$ to $\mathrm{U}(\widehat{\mathfrak{q}}^{tw}_n)$ defined by 
 $$\pi\left(t_{ij}^{(r+1)}\right)\mapsto -(-1)^{|j|}\mathsf{g}_{ji}^{(r)},$$
 for $i,j\in I_{n|n}$ and $r\in\mathbb{N}$.
\end{theorem}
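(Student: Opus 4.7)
The plan is to construct the inverse map $\phi:\mathrm{U}(\widehat{\mathfrak{q}}^{tw}_n)\to \mathrm{gr}^{\prime}\Y(\mathfrak{q}_n)$ sending $\mathsf{g}_{ji}^{(r)}\mapsto -(-1)^{|j|}\,\overline{t}_{ij}^{(r+1)}$, where $\overline{t}_{ij}^{(r+1)}$ denotes the image of $t_{ij}^{(r+1)}$ in the degree-$r$ piece of $\mathrm{gr}^{\prime}\Y(\mathfrak{q}_n)$, and then to show it is a bijective superalgebra homomorphism. Well-definedness on the symmetry relation $\mathsf{g}_{ji}^{(r)}=(-1)^r\mathsf{g}_{-j,-i}^{(r)}$ is a direct consequence of \eqref{eq:tijsym}, which translates coefficient-wise to $t_{ij}^{(r+1)}=(-1)^{r+1}t_{-i,-j}^{(r+1)}$ and matches under the sign $-(-1)^{|j|}$ (using $|-j|=|j|+\bar{1}$).

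The heart of the matter is verifying that the defining commutation relations of $\widehat{\mathfrak{q}}^{tw}_n$ hold for the images. The key observation is a filtration-degree count in \eqref{eq:RTTexp}: the commutator $[t_{ij}^{(r+1)},t_{kl}^{(s+1)}]$ has filtration degree $r+s$, whereas any genuine quadratic term $t_{ab}^{(p)}t_{cd}^{(q)}$ appearing in the expansion has $p+q=r+s+1$ and hence filtration degree $(p-1)+(q-1)=r+s-1$. Thus, modulo lower filtration degree, only the linear contributions survive, which come precisely from the $\delta_{ij}$-constant term in $t_{ij}(u)=\delta_{ij}+\sum_{r\geqslant 1}t_{ij}^{(r)}u^{-r}$. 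Expanding the two generating functions
\[
\frac{t_{kj}(u)t_{il}(v)-t_{kj}(v)t_{il}(u)}{u-v},\qquad \frac{t_{-k,j}(u)t_{-i,l}(v)-t_{k,-j}(v)t_{i,-l}(u)}{u+v}
\]
and extracting the coefficient of $u^{-r-1}v^{-s-1}$ from their $\delta$-linear pieces, one obtains $\delta_{jk}t_{il}^{(r+s+1)}-\delta_{il}t_{kj}^{(r+s+1)}$ from the first summand, and the analogous expression with signs $(-1)^{r+1}$, $(-1)^{s+1}$ multiplying the $t_{k,-j}^{(r+s+1)}, t_{i,-l}^{(r+s+1)}$ terms from the second. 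After multiplying by $-(-1)^{|l|}$ and $-(-1)^{|j|}$ in accordance with the proposed map and dividing by $\theta(i,k,l)$, a careful sign bookkeeping shows that this is exactly the defining relation of $\widehat{\mathfrak{q}}^{tw}_n$.

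For bijectivity, surjectivity is immediate since the classes $\overline{t}_{ij}^{(r+1)}$ generate $\mathrm{gr}^{\prime}\Y(\mathfrak{q}_n)$ by construction. Injectivity follows from a PBW comparison: the PBW theorem for $\Y(\mathfrak{q}_n)$ (the first filtration, stated after this theorem in \cite[Corollary~2.4]{N99}) provides an upper bound on the size of $\mathrm{gr}^{\prime}\Y(\mathfrak{q}_n)$ via ordered monomials in the $\overline{t}_{ij}^{(r)}$ subject to the symmetry $\overline{t}_{ij}^{(r)}=(-1)^{r-1}\overline{t}_{-i,-j}^{(r)}$, and this upper bound matches the PBW basis of $\mathrm{U}(\widehat{\mathfrak{q}}^{tw}_n)$ via a chosen ordering of the generators $\mathsf{g}_{ji}^{(r)}$. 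Since $\phi$ is a surjection sending a spanning set to a basis (after this matching), it must be an isomorphism.

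The main obstacle is the sign/parity bookkeeping in the leading-order extraction from the $\mathsf{Q}$-part. The asymmetric appearance of $(-1)^r$ and $(-1)^s$, together with the prefactor $(-1)^{|k|+|l|}$ and the sign $-(-1)^{|j|}$ built into the map, must all conspire to reproduce the twisted current relation $\mathsf{g}_{ij}^{(r)}=(-1)^r\mathsf{g}_{-i,-j}^{(r)}$ together with the bracket formula; a misplaced sign here would destroy both well-definedness and the match with the defining relations of $\widehat{\mathfrak{q}}^{tw}_n$, so this is the step requiring the most care.
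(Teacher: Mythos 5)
The paper itself offers no proof of this statement: it is imported verbatim from \cite[Theorem 2.3]{N99}, so there is no in-paper argument to compare yours against. On its own terms, the first half of your proposal is sound and is the standard route: the symmetry check works because $t_{-i,-j}^{(r+1)}=(-1)^{r+1}t_{ij}^{(r+1)}$ from \eqref{eq:tijsym} is reconciled with $\mathsf{g}_{-j,-i}^{(r)}=(-1)^{r}\mathsf{g}_{ji}^{(r)}$ precisely by the extra sign coming from $|-j|=|j|+\bar{1}$ in the prefactor $-(-1)^{|j|}$; and the filtration-degree count in \eqref{eq:RTTexp} (quadratic terms drop to degree $r+s-1$, only the $\delta$-linear terms survive in degree $r+s$) correctly produces the bracket of $\widehat{\mathfrak{q}}^{tw}_n$, including the asymmetric $(-1)^r$ from the $\mathsf{Q}$-part. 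This establishes a surjective superalgebra homomorphism $\mathrm{U}(\widehat{\mathfrak{q}}^{tw}_n)\twoheadrightarrow\mathrm{gr}^{\prime}\mathrm{Y}(\mathfrak{q}_n)$, which is in effect \cite[Proposition 2.1]{N99}.

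The gap is in your injectivity step. You invoke the PBW theorem \cite[Corollary 2.4]{N99}, but in Nazarov's paper that corollary is \emph{deduced from} the very theorem you are proving: both statements are equivalent to the linear independence of the ordered monomials in the $t_{ij}^{(r)}$, and neither filtration argument supplies this on its own --- the relations only show that each associated graded algebra is a \emph{quotient} of the expected object (free supercommutative algebra, resp.\ $\mathrm{U}(\widehat{\mathfrak{q}}^{tw}_n)$). So, as a proof of the cited result, the argument is circular; the missing ingredient is an independent linear-independence input, which Nazarov obtains by composing iterated comultiplications with evaluation homomorphisms into $\mathrm{U}(\mathfrak{q}_N)$ (cf.\ Lemma~\ref{lem:embeddingandevaluation} and the centralizer construction) and checking that ordered monomials have linearly independent images there for large $N$. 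A secondary slip: the PBW theorem would give a \emph{lower} bound on $\mathrm{gr}^{\prime}\mathrm{Y}(\mathfrak{q}_n)$ (linear independence of the classes $\overline{t}_{ij}^{\,(r)}$), to be matched against the \emph{upper} bound coming from the surjectivity of your $\phi$; you have the directions of the two bounds reversed, though the intended dimension-matching conclusion is the right one once the independent input is in place.
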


\section{A Block Gauss decomposition}\label{se:Gaussdec}

In this section, we will find a family of generators for the super-Yangian $\Y(\mathfrak{q}_n)$ through the Gaussian factorization of the generator matrix $T(u)$ in an appropriate block form. 

We reorder the generator matrix $T(u)$ of $\Y(\mathfrak{q}_n)$ such that its rows and columns are labeled by $1,-1,2,-2,\ldots,n,-n$. Then $T(u)$ can be partitioned into an $n\times n$ block matrix such that each block $T_{ab}(u)$ is a $2\times 2$-matrix with entries in $\mathrm{Y}(\mathfrak{q}_n)[[u^{-1}]]$, that is, 
\begin{align}\label{eq:qnTu}
T(u)=
\begin{pmatrix}
T_{11}(u)&\ldots &T_{1n}(u)\\
\vdots& &\vdots\\
T_{n1}(u)&\ldots & T_{nn}(u)
\end{pmatrix}
=\sum\limits_{a,b=1}^n\mathsf{E}_{ab}\otimes T_{ab}(u), 
\end{align}
where each block
\begin{align*}
T_{ab}(u)
=\sum\limits_{i,j\in I_{1|1}}\varepsilon_{ij}\otimes t_{ia,jb}(u),
\end{align*}
is a $2\times 2$-matrix satisfying $t_{-a,-b}(u)=t_{ab}(-u)$ and $t_{a,-b}(u)=t_{-a,b}(-u)$ for $a,b=1,2,\ldots,n$. Such $2\times2$-matrices possess the following nice properties:
\begin{lemma}\label{le:q1}
Let $\mathcal{A}=\mathcal{A}_{\bar{0}}\oplus\mathcal{A}_{\bar{1}}$ be an arbitrary associative superalgebra with unit and $\mathcal{A}[[u^{-1}]]$ be the superalgebra of formal series with coefficients in $\mathcal{A}$. We say that a $2\times 2$-matrix 
$$X(u)=\sum\limits_{i,j\in I_{1|1}}\varepsilon_{ij}\otimes x_{ij}(u)$$ 
with entries in $\mathcal{A}[[u^{-1}]]$ is of YQ form if $x_{-i,-j}(u)=x_{ij}(-u)\in\mathcal{A}_{\bar{0}}[[u^{-1}]]$ for $i,j=-1,1$. Then these matrices fulfill the following properties:
\begin{enumerate}
\item If $X(u)$ and $Y(u)$ are $2\times2$-matrices of YQ form, so is $X(u)Y(u)$.
\item Let $X(u)$ be a $2\times2$-matrix of YQ form. If $x_{11}(u)$ is invertible and $x_{-1,1}(u)\in u^{-1}\mathcal{A}[[u^{-1}]]$, then $X(u)$ is an invertible matrix, whose inverse is also of YQ form. 
\end{enumerate}
\end{lemma}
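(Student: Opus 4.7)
The plan is to reformulate the YQ condition as a single involutive matrix symmetry and then exploit this to give short arguments for both parts. Let $\Sigma:=\varepsilon_{1,-1}+\varepsilon_{-1,1}\in\mathrm{End}(\mathbb{C}^{1|1})$ be the (even) swap matrix, so $\Sigma^{2}=1$. A direct check shows that the condition $x_{-i,-j}(u)=x_{ij}(-u)$ for $i,j\in\{-1,1\}$ is equivalent to the single identity
\[
\Sigma\, X(u)\, \Sigma \;=\; X(-u)
\]
in $\mathrm{End}(\mathbb{C}^{1|1})\otimes\mathcal{A}[[u^{-1}]]$. Together with the parity assignment $|x_{ij}|=|i|+|j|$, this is all that we need.

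For part (i), the parity requirement is preserved since the $(i,l)$-entry of $X(u)Y(u)$ is a signed sum $\sum_{j}\pm\, x_{ij}(u)y_{jl}(u)$, each summand of total parity $(|i|+|j|)+(|j|+|l|)=|i|+|l|$. The YQ symmetry follows from the fact that conjugation by $\Sigma$ is an algebra automorphism of $\mathrm{End}(\mathbb{C}^{1|1})\otimes\mathcal{A}[[u^{-1}]]$, so
\[
\Sigma\bigl(X(u)Y(u)\bigr)\Sigma=\bigl(\Sigma X(u)\Sigma\bigr)\bigl(\Sigma Y(u)\Sigma\bigr)=X(-u)Y(-u).
\]

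For part (ii), set $f(u):=x_{11}(u)$ and $g(u):=x_{-1,1}(u)$; the YQ symmetry then forces the four entries of $X(u)$ to be $f(u),g(-u),g(u),f(-u)$. By hypothesis $f(u)$ is a unit of $\mathcal{A}[[u^{-1}]]$, and $g(\pm u)\in u^{-1}\mathcal{A}[[u^{-1}]]$. The plan is to use the standard $2\times2$ block Gauss factorisation
\[
X(u)=\begin{pmatrix}1&0\\ g(u)f(u)^{-1}&1\end{pmatrix}
\begin{pmatrix}f(u)&0\\ 0&S(u)\end{pmatrix}
\begin{pmatrix}1&f(u)^{-1}g(-u)\\ 0&1\end{pmatrix},
\]
where $S(u):=f(-u)-g(u)f(u)^{-1}g(-u)$ is the Schur complement. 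The key observation is that $g(u)f(u)^{-1}g(-u)\in u^{-2}\mathcal{A}[[u^{-1}]]$, hence $S(u)=f(-u)\bigl(1-f(-u)^{-1}g(u)f(u)^{-1}g(-u)\bigr)$ is a product of a unit of $\mathcal{A}[[u^{-1}]]$ with a series of the form $1+u^{-2}(\cdots)$, and is therefore itself a unit. All three block-triangular factors are then invertible, so $X(u)$ is invertible and $X(u)^{-1}$ is the product of their inverses in reverse order.

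The YQ form of $Y(u):=X(u)^{-1}$ is now immediate from the symmetry reformulation: applying $\Sigma(\cdot)\Sigma$ to $X(u)Y(u)=1$ and using $\Sigma X(u)\Sigma=X(-u)$ gives $X(-u)\bigl(\Sigma Y(u)\Sigma\bigr)=1$, whence $\Sigma Y(u)\Sigma=X(-u)^{-1}=Y(-u)$. The parity of the entries of $Y(u)$ is read off from the explicit product of the inverses of the three triangular factors, which are polynomial expressions in $f(\pm u)^{\pm1}$, $g(\pm u)$ and $S(\pm u)^{-1}$ whose parities track $|i|+|l|$ as required. The only delicate point in the whole argument is the Koszul sign bookkeeping in the supermatrix products, but since $\Sigma$ is even and the YQ parity assignment is compatible with matrix multiplication, these signs cancel uniformly.
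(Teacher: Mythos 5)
The paper itself gives no argument for this lemma (the ``proof'' consists of the sentence ``The proof of the lemma is straightforward. We omit it here.''), so there is no authorial proof to compare against; your proposal has to stand on its own. Your treatment of part (ii) is sound and is surely what the authors intend: the lower--diagonal--upper factorisation with Schur complement $S(u)=f(-u)-g(u)f(u)^{-1}g(-u)$, together with the observation that $g(u)f(u)^{-1}g(-u)\in u^{-2}\mathcal{A}[[u^{-1}]]$ forces $S(u)$ to be a unit, correctly establishes invertibility, and passing the identity $X(u)X(u)^{-1}=1$ through the substitution $u\mapsto -u$ correctly transfers the symmetry to the inverse.

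The one genuine problem is the claim on which you hang everything else: that $\Sigma=\varepsilon_{1,-1}+\varepsilon_{-1,1}$ is \emph{even} and that $\Sigma X(u)\Sigma=X(-u)$ characterises YQ form. In $\mathrm{End}(\mathbb{C}^{1|1})$ the matrix units $\varepsilon_{1,-1}$ and $\varepsilon_{-1,1}$ have parity $\bar{1}$, so $\Sigma$ is odd, and under the paper's Koszul-signed tensor product the right multiplication $(\varepsilon_{-i,j}\otimes x_{ij})(\Sigma\otimes 1)=(-1)^{|x_{ij}|}\varepsilon_{-i,-j}\otimes x_{ij}$ picks up the sign $(-1)^{|i|+|j|}$. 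Hence $\Sigma X(u)\Sigma=X(-u)$ is equivalent to $x_{-i,-j}(u)=(-1)^{|i|+|j|}x_{ij}(-u)$, which differs from the YQ condition by a sign on the two odd entries; the ``delicate point'' you dismiss at the end is exactly where the argument breaks. The lemma is nevertheless true, and there are two clean repairs. Either treat the $2\times 2$ blocks as ordinary matrices over the ring $\mathcal{A}[[u^{-1}]]$ (which is how the quasideterminant Gauss decomposition is actually used), in which case conjugation by the permutation matrix really is the entry permutation you want; or keep the super conventions and verify part (i) entrywise: in $(XY)_{il}=\sum_j(-1)^{(|i|+|j|)(|j|+|l|)}x_{ij}y_{jl}$ the Koszul sign depends on the indices only through $|i|+|j|$ and $|j|+|l|$, which are unchanged under the simultaneous replacement $i\mapsto -i$, $j\mapsto -j$, $l\mapsto -l$, so $(XY)_{-i,-l}(u)=\sum_j(-1)^{(|i|+|j|)(|j|+|l|)}x_{-i,-j}(u)y_{-j,-l}(u)=(XY)_{il}(-u)$. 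Either route closes the gap; as written, the reduction to conjugation by an ``even'' $\Sigma$ does not.
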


\begin{proof}
The proof of the lemma is straightforward. We omit it here.
\end{proof}

Now, the generator matrix $T(u)$ of $\Y(\mathfrak{q}_n)$ is regarded as an $n\times n$-block matrix, in which each block $T_{ab}(u)$ is a $2\times 2$-matrix of YQ form with entries in $\Y(\mathfrak{q}_n)[[u^{-1}]]$. By \cite{GR97}, $T(u)$ admits the following block Gauss decomposition:
\begin{equation}
T(u)=F(u)H(u)E(u),\label{eq:Gaussfac}
\end{equation}
where $F(u)$ (resp. $H(u)$ and $E(u)$) is a lower triangular unipotent (resp. diagonal and upper triangular unipotent) $n\times n$-block matrix such that all blocks are $2\times 2$-matrices in $\mathrm{Y}(\mathfrak{q}_n)[[u^{-1}]]$. Namely, if we denote the $(a,b)$-block of $F(u)$ and $E(u)$ by $F_{ab}(u)$ and $E_{ab}(u)$, respectively, then
$$F_{ab}(u)=E_{ba}(u)=0,\text{if }a<b, \text{ and }E_{aa}(u)=F_{aa}(u)=1,a=1,2,\ldots,n.$$
We also denote $H(u)=\mathrm{diag}\left(H_1(u), H_2(u),\ldots,H_n(u)\right)$. 

The factorization \eqref{eq:Gaussfac} is uniquely determined by $T(u)$. In fact, $F(u), H(u)$ and $E(u)$ can be described using quasi-determinants \cite{GR97}. Suppose that $A, B, C$ and $D$ are $m\times m, m\times n, n\times m$ and $n\times n$ matrices with entries in some ring, respectively, and $A$ is invertible, then the quasi-determinant is 
\begin{equation}
\label{eq:qdet}
\begin{vmatrix}A&B\\C&\boxed{D}\end{vmatrix}:=D-CA^{-1}B.
\end{equation}
For $1\leqslant i,j\leqslant n$ and $r<i,j$, we denote 
$$\Delta_{r,a,b}(T(u))
:=\begin{vmatrix}
	T_{11}(u)&T_{12}(u)&\cdots&T_{1r}(u)&T_{1b}(u)\\
	T_{21}(u)&T_{22}(u)&\cdots&T_{2r}(u)&T_{2b}(u)\\
	\vdots&\vdots&&\vdots\\
	T_{r1}(u)&T_{r2}(u)&\cdots&T_{rr}(u)&T_{rb}(u)\\
	T_{a1}(u)&T_{a2}(u)&\cdots&T_{ar}(u)&\boxed{T_{ab}(u)}
	\end{vmatrix},$$
and set $\Delta_{0,1,1}(T(u))=H_1(u)$.
Then 
\begin{align}\label{re:generatorsHEF}
\begin{cases}
H_a(u)=\Delta_{a-1,a,a}(T(u)), &a=1,\ldots,n,\\
E_{ba}(u)=H_b(u)^{-1}\Delta_{b-1,b,a}(T(u)),&1\leqslant b<a\leqslant n,\\
F_{ab}(u)=\Delta_{b-1,a,b}(T(u))H_b(u)^{-1},&1\leqslant b<a\leqslant n.
\end{cases}
\end{align}

By Lemma~\ref{le:q1}, all blocks $H_a(u), a=1,2,\ldots,n$, $E_{ab}(u)$, $F_{ba}(u)$ for $1\leqslant a<b\leqslant n$ are $2\times2$-matrices of YQ form with entries in $\Y(\mathfrak{q}_n)[[u^{-1}]]$. We simply write $E_b(u)=E_{b,b+1}(u)$ and $F_b(u)=F_{b+1,b}(u)$ for $b=1,2,\ldots,n-1$. Then the entries of matrices $H_a(u)$, $E_b(u)$ and $F_b(u)$ can be written as follows:
\begin{align*}
H_a(u)=&\varepsilon_{11}\otimes h_a(u)+\varepsilon_{-1,-1}\otimes h_a(-u)
+\varepsilon_{-1,1}\otimes \bar{h}_a(u)+\varepsilon_{1,-1}\otimes\bar{h}_a(-u),\\
E_b(u)=&\varepsilon_{11}\otimes e_b(u)+\varepsilon_{-1,-1}\otimes e_b(-u)
+\varepsilon_{-1,1}\otimes \bar{e}_b(u)+\varepsilon_{1,-1}\otimes\bar{e}_b(-u),\\
F_b(u)=&\varepsilon_{11}\otimes f_b(u)+\varepsilon_{-1,-1}\otimes f_b(-u)
+\varepsilon_{-1,1}\otimes \bar{f}_b(u)+\varepsilon_{1,-1}\otimes\bar{f}_b(-u),
\end{align*}
for $a=1,2,\ldots,n$ and $b=1,2,\ldots,n-1$. Then we can prove the following theorem:

\begin{theorem}
	\label{thm:generator}
	The associative superalgebra $\Y(\mathfrak{q}_n)$ is generated by all coefficients of the formal series $h_a(u)$, $\bar{h}_a(u)$, $e_b(u)$, $\bar{e}_b(u)$, $f_b(u)$ and $\bar{f}_b(u)$ for $a=1,2,\ldots,n$ and $b=1,2,\ldots,n-1$.
\end{theorem}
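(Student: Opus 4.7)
The plan is to show that every RTT generator $t_{ij}^{(r)}$ lies in the subalgebra $\mathcal{Y}'\subseteq\Y(\mathfrak{q}_n)$ generated by the listed coefficients of $h_a(u)$, $\bar h_a(u)$, $e_b(u)$, $\bar e_b(u)$, $f_b(u)$ and $\bar f_b(u)$. Expanding the block Gauss factorization $T(u)=F(u)H(u)E(u)$ entrywise realizes each scalar series $t_{ij}(u)$ as a finite polynomial in the entries of the $2\times 2$ blocks $F_{ab}(u)$ (for $a>b$), $H_a(u)$, and $E_{ab}(u)$ (for $a<b$). Hence it suffices to show that every entry of every such block already lies in $\mathcal{Y}'$.

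For the diagonal blocks $H_a(u)$ and for the adjacent off-diagonal blocks $E_b(u)=E_{b,b+1}(u)$ and $F_b(u)=F_{b+1,b}(u)$, this is immediate by construction: the YQ-form constraint $x_{-i,-j}(u)=x_{ij}(-u)$ from Lemma~\ref{le:q1} recovers all four entries of a YQ block from the pair of independent series $(h_a,\bar h_a)$, $(e_b,\bar e_b)$, or $(f_b,\bar f_b)$. The task therefore reduces to the non-adjacent blocks $E_{ab}(u)$ and $F_{ba}(u)$ with $b-a\ge 2$.

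Following the Brundan--Kleshchev philosophy, I would establish inductive (super-)commutator identities expressing each non-adjacent $E_{ab}(u)$ in the schematic form
\begin{equation*}
E_{ab}(u)\;=\;\bigl[\,(\text{a fixed scalar coefficient of }E_b(u))\,,\,E_{a+1,b}(u)\,\bigr]\;+\;(\text{terms already in }\mathcal{Y}'),
\end{equation*}
so that induction on $b-a$ places every $E_{ab}(u)$ in $\mathcal{Y}'$. Such identities are to be extracted from the block form of the RTT relation \eqref{eq:RTTexp}, applied to suitable pairs of blocks and then translated into the $F$--$H$--$E$ coordinates supplied by the Gauss factorization. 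The parallel statement for $F_{ba}(u)$ follows either by a symmetric argument or, more economically, by invoking the anti-involution $\omega$ of Lemma~\ref{lem:antiinvolution} together with an appropriate relabelling of block indices.

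The main obstacle is the explicit computation of these commutators in the $2\times 2$-block YQ-form setting. Each block carries one independent even and one independent odd scalar series, so a single block commutator produces several coupled scalar identities that intertwine $e$ with $\bar e$ (and analogously for $f$ and $h$). Moreover, the $R$-matrix twist term $(u+v)^{-1}\mathsf Q$ contributes correction terms with no counterpart in the $\Y(\mathfrak{gl}_n)$ treatment. Verifying that every such correction involves only blocks of strictly smaller distance $|a-b|$, so that the induction closes cleanly, is the essential technical point; the block reformulation of the RTT relation is precisely the tool tailored for this bookkeeping.
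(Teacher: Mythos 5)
Your reduction is sound and matches the paper's starting point: expanding $T(u)=F(u)H(u)E(u)$ shows it suffices to place every block $H_a(u)$, $E_{ab}(u)$, $F_{ba}(u)$ in the subalgebra $\mathcal{Y}'$, and the YQ form handles the diagonal and adjacent blocks. The gap is in the step you yourself flag as ``the essential technical point'': the inductive identity expressing $E_{ab}(u)$ as a bracket of a coefficient of $E_b(u)$ with $E_{a+1,b}(u)$ plus terms in $\mathcal{Y}'$ is only stated in schematic form, and you give no argument that the corrections coming from the $(u+v)^{-1}\mathsf{Q}$ part of the $R$-matrix involve only blocks of strictly smaller distance. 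Such identities do hold (compare the later relation \eqref{eq:q3:e1e2}, whose coefficient of $v^{0}$ gives $e_{13}(u)=[e_1(u),e_2^{(1)}]$ with no correction at all), but establishing them requires the block inverse-matrix relations \eqref{re:BTWT} and a genuine computation of the type carried out in Lemma~\ref{lem:E1E2}; as written, the proposal asserts the conclusion of that computation rather than proving it, so the induction is not closed.

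The paper avoids this computation entirely by passing to the associated graded algebra for the loop filtration. Since the cross terms $F_{ap}(u)H_p(u)E_{pb}(u)$ in \eqref{re:TabHFHE} drop filtration degree, one has $\pi\bigl(T_{ab}^{(r)}\bigr)=\pi\bigl(E_{ab}^{(r)}\bigr)$ (and similarly for $F$, $H$), and in $\mathrm{gr}^{\prime}\Y(\mathfrak{q}_n)\cong\mathrm{U}(\widehat{\mathfrak{q}}_n^{tw})$ the block RTT relation degenerates to the exact Lie bracket
$\bigl[\pi\bigl(T_{ab}^{(r+1),1}\bigr),\pi\bigl(T_{bc}^{(s+1),2}\bigr)\bigr]=(P+(-1)^rQ)\,\pi\bigl(T_{ac}^{(r+s+1),2}\bigr)$,
with no correction terms whatsoever; generation of the graded algebra by the symbols then yields generation of $\Y(\mathfrak{q}_n)$ by the standard filtration argument. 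If you want to keep your direct approach, you must actually prove the precise block commutator identities (essentially anticipating Lemma~\ref{lem:E1E2}); otherwise, adopting the filtration shortcut turns your sketch into a complete proof with far less bookkeeping.
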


\begin{proof}
By the Gauss decomposition \eqref{eq:Gaussfac},
\begin{align}
T_{aa}(u)=&H_a(u)+\sum_{p=1}^{a-1}F_{ap}(u)H_p(u)E_{pa}(u),&&a=1,2,\ldots,n,\label{re:TaaHFHE}\\
T_{ab}(u)=&H_a(u)E_{ab}(u)+\sum_{p=1}^{a-1}F_{ap}(u)H_p(u)E_{pb}(u),&&1\leqslant a<b\leqslant n,\label{re:TabHFHE}\\
T_{ba}(u)=&F_{ba}(u)H_a(u)+\sum_{p=1}^{a-1}F_{bp}(u)H_p(u)E_{pa}(u),&&1\leqslant a<b\leqslant n.\label{re:TbaHFHE}
\end{align}
Recall that $\pi:\Y(\mathfrak{q}_n)\rightarrow\mathrm{gr}^{\prime}\Y(\mathfrak{q}_n)$ is the canonical homomorphism given in Section~\ref{se:Yqdef}. The above identities show that $\pi\left(T_{aa}^{(r)}\right)=\pi\left(H_a^{(r)}\right)$ for $a=1,2,\ldots,n$, $r\geqslant1$,
$$\pi\left(T_{ab}^{(r)}\right)=\pi\left(E_{ab}^{(r)}\right), \text{ and }\pi\left(T_{ba}^{(r)}\right)=\pi\left(F_{ba}^{(r)}\right),$$
for $1\leqslant a<b\leqslant n, r\geqslant1$.

On the other hand, the block RTT relation \eqref{re:BTT} imply that
$$\left[\pi\left(T_{ab}^{(r+1),1}\right),\pi\left(T_{bc}^{(s+1),2}\right)\right]
=\left(P+(-1)^rQ\right)\pi\left(T_{ac}^{(r+s+1),2}\right),$$
for distinct $1\leqslant a,b,c\leqslant n$. Hence, $\Y(\mathfrak{q}_n)$ is generated by the coefficients of all entries in the matrices $H_a(u)$ for $a=1,2,\ldots,n$, $E_b(u)$ and $F_b(u)$ for $b=1,2,\ldots,n-1$. 
\end{proof}

\begin{proposition}\label{antiinvolutionomega}
    Let $\omega$ be the anti-involution given in Lemma~\ref{lem:antiinvolution}. Then 
    \begin{align}
     \omega\left(h_{c}(u)\right)=&h_{c}(u),& 
     \omega\left(e_{ab}(u)\right)=&f_{ba}(u),&
     \omega\left(f_{ba}(u)\right)=&e_{ab}(u),\label{re:inveven}\\
    \omega\left(\bar{h}_{c}(u)\right)=&\bar{h}_{c}(-u),&
    \omega\left(\bar{e}_{ab}(u)\right)=&\bar{f}_{ba}(-u),&
    \omega\left(\bar{f}_{ba}(u)\right)=&\bar{e}_{ab}(-u),\label{re:invodd}
    \end{align}
    for $1\leqslant a<b\leqslant n$ and $1\leqslant c\leqslant n$.
\end{proposition}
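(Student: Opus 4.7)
The plan is to apply the anti-involution $\omega$ to the block Gauss decomposition $T(u)=F(u)H(u)E(u)$ and read off how $\omega$ acts on the Gauss generators by invoking the uniqueness of that decomposition. To organize this, I extend $\omega$ entrywise to $2n\times 2n$ matrices over $\mathrm{Y}(\mathfrak{q}_n)$ and combine it with the ordinary matrix transpose $t$, setting $\sigma(X):=\omega(X)^{t}$. A direct index computation (using only $\omega(ab)=\omega(b)\omega(a)$ together with the usual matrix product formula) shows that $\sigma$ is an anti-homomorphism of matrix algebras, $\sigma(XY)=\sigma(Y)\sigma(X)$. Since $\omega(t_{ij}(u))=t_{ji}(u)$ by Lemma~\ref{lem:antiinvolution}, the entrywise image $\omega(T(u))$ coincides with $T(u)^{t}$, whence $\sigma(T(u))=T(u)$.

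Applying $\sigma$ to both sides of $T(u)=F(u)H(u)E(u)$ therefore gives
\[
T(u)=\sigma(E(u))\,\sigma(H(u))\,\sigma(F(u)).
\]
Transposition exchanges upper and lower block-triangular patterns and preserves block-diagonal ones, while $\omega$ fixes the identity blocks on the diagonal. Consequently, regarded as $n\times n$ arrays of $2\times 2$ matrices, $\sigma(E(u))$ is block lower-triangular unipotent, $\sigma(H(u))$ is block diagonal, and $\sigma(F(u))$ is block upper-triangular unipotent. Because the pivots $h_a(u)$ have constant term $1$, the block Gauss decomposition is unique in the sense of \cite{GR97}, which forces
\[
F(u)=\sigma(E(u)),\qquad H(u)=\sigma(H(u)),\qquad E(u)=\sigma(F(u))
\]
at the block level.

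To convert these block identities into the claimed formulas, I note that the $(a,b)$-block of $\sigma(X)$ is precisely the ordinary $2\times 2$ transpose of the entrywise $\omega$-image of the $(b,a)$-block of $X$. Applied to $H_a(u)=\omega(H_a(u))^{t}$, $F_{ba}(u)=\omega(E_{ab}(u))^{t}$ for $a<b$, and $E_{ab}(u)=\omega(F_{ba}(u))^{t}$, this expresses each of $\omega(h_a(u))$, $\omega(\bar h_a(u))$, $\omega(e_{ab}(u))$, $\omega(\bar e_{ab}(u))$, $\omega(f_{ba}(u))$, and $\omega(\bar f_{ba}(u))$ directly in terms of the $2\times 2$ parametrizations recorded just before Theorem~\ref{thm:generator}. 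For example, the $(-1,1)$-entry of $F_{ba}(u)=\omega(E_{ab}(u))^{t}$ reads $\bar f_{ba}(u)=\omega(\bar e_{ab}(-u))$, which after substituting $u\mapsto -u$ becomes $\omega(\bar e_{ab}(u))=\bar f_{ba}(-u)$ as in \eqref{re:invodd}; the remaining identities in \eqref{re:inveven} and \eqref{re:invodd} follow by the same bookkeeping on the other three entries of each $2\times 2$ block.

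I expect the main technical obstacle to be precisely this bookkeeping: two simultaneous layers of transposition, one at the $n\times n$ block level and one inside each $2\times 2$ YQ block of the form described in Lemma~\ref{le:q1}, must be aligned so that the combined operation $\sigma$ exchanges the lower-triangular factor $F(u)$ with the upper-triangular factor $E(u)$ while leaving $H(u)$ invariant. Once $\sigma$ is set up carefully, the verification of its anti-multiplicativity and the invertibility of the pivots required for the uniqueness of the Gauss decomposition are routine.
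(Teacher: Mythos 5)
Your proposal is correct and follows essentially the same route as the paper: both apply the combination of entrywise $\omega$ with block transposition to the Gauss factorization $T(u)=F(u)H(u)E(u)$, after checking that this operation is anti-multiplicative and fixes $T(u)$, and then read off the action on the $2\times 2$ blocks. The only difference is at the final step, where you invoke the uniqueness of the block Gauss decomposition to identify $F(u)=\sigma(E(u))$, $H(u)=\sigma(H(u))$, $E(u)=\sigma(F(u))$ all at once, whereas the paper extracts the same block identities by starting from the first row and column and inducting through the recursive formulas \eqref{re:TaaHFHE}--\eqref{re:TbaHFHE}; both are valid.
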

\begin{proof}
For a $2\times 2$-matrix $X(u)=\sum\limits_{i,j\in I_{1|1}}\varepsilon_{ij}\otimes x_{ij}(u)$ with $x_{ij}(u)\in\Y(\mathfrak{q}_n)[[u^{-1}]]$, we denote $$\omega(X(u))=\sum\limits_{i,j\in I_{1|1}}\varepsilon_{ji}\otimes \omega\left(x_{ij}(u)\right).$$
Then $\omega\left(T_{ab}(u)\right)=T_{ba}(u)$ for $a,b=1,2,\ldots,n$. It suffices to show that
\begin{equation}
\omega\left(H_c(u)\right)=H_c(u),\quad 
\omega\left(E_{ab}(u)\right)=F_{ba}(u),\quad
\omega\left(F_{ba}(u)\right)=E_{ab}(u),
\label{eq:invmat}
\end{equation}
for $1\leqslant a<b\leqslant n$ and $1\leqslant c\leqslant n$.

Since $\omega$ is an anti-automorphism of the associative algebra $\Y(\mathfrak{q}_n)$, we directly verify that $$\omega\left(X(u)Y(u)\right)=\omega\left(Y(u)\right)\omega\left(X(u)\right)$$ for any $2\times 2$-matrices $X(u), Y(u)$ of YQ form with entries in $\Y(\mathfrak{q}_n)$.

The block Gauss decomposition \eqref{eq:Gaussfac} implies that 
$$T_{11}(u)=H_1(u), T_{1b}(u)=H_1(u)E_{1b}(u),\qquad T_{b1}(u)=F_{b1}(u)H_1(u),$$
for $b=2,3,\ldots,n$. It follows that $\omega\left(H_1(u)\right)=\omega\left(T_{11}(u)\right)=T_{11}(u)=H_1(u)$. Then we deduce that
\begin{align*}
T_{b1}(u)=\omega\left(T_{1b}(u)\right)=\omega\left(E_{1b}(u)\right)\omega\left(H_1(u)\right)=\omega\left(E_{1b}(u)\right)H_1(u),
\end{align*}
which shows $\omega\left(E_{1b}(u)\right)=F_{b1}(u)$. Similarly, we have $\omega\left(F_{b1}(u)\right)=E_{1b}(u)$.

Then \eqref{eq:invmat} can be obtained inductively through \eqref{re:TaaHFHE}, \eqref{re:TabHFHE}, and \eqref{re:TbaHFHE}.
\end{proof}
\bigskip

In order to effectively calculate with matrix blocks, we reformulate the RTT relation~\ref{eq:RTT} in a block form. Indeed, the block matrix \eqref{eq:qnTu} can be understood via the identification $\mathrm{End}\left(\mathbb{C}^{n|n}\right)\cong\mathrm{End}\left(\mathbb{C}^n\right)\otimes\mathrm{End}\left(\mathbb{C}^{1|1}\right)$ by
$$\mathsf{E}_{i,j}\mapsto \mathsf{E}_{ij}\otimes \varepsilon_{11},\quad \mathsf{E}_{-i, j}\mapsto \mathsf{E}_{ij}\otimes \varepsilon_{-1,1},
\quad \mathsf{E}_{i,-j}\mapsto \mathsf{E}_{ij}\otimes \varepsilon_{1,-1},\quad \mathsf{E}_{-i,-j}\mapsto \mathsf{E}_{ij}\otimes \varepsilon_{-1,-1},$$
where $\mathsf{E}_{ij}$ is the matrix unit of $\mathrm{Mat}_{n|n}(\mathbb{C})$ or $\mathrm{Mat}_{n}(\mathbb{C})$ in an obvious way, and $\varepsilon_{kl}$, for $k,l\in\{\pm 1\}$, is the matrix unit of $\mathrm{Mat}_{1|1}(\mathbb{C})$. Then the associative superalgebra $\mathrm{End}\left(\mathbb{C}^{n|n}\right)\otimes\mathrm{End}\left(\mathbb{C}^{n|n}\right)$ could be identified with 
$$\mathrm{End}\left(\mathbb{C}^n\right)\otimes \mathrm{End}\left(\mathbb{C}^n\right)\otimes
\mathrm{End}\left(\mathbb{C}^{1|1}\right)\otimes \mathrm{End}\left(\mathbb{C}^{1|1}\right).$$
Under this identification, the $R(u,v)$-matrix can be rewritten as  
$$R(u,v)=1-\sum\limits_{i,j=1}^n\mathsf{E}_{ij}\otimes \mathsf{E}_{ji}\otimes K(u,v),$$
where 
\begin{align}\label{re:Kuv}
K(u,v)=&\frac{1}{u-v}\sum_{r,s=-1}^1(-1)^{|s|}\varepsilon_{rs}\otimes \varepsilon_{sr}
+\frac{1}{u+v}\sum\limits_{r,s=-1}^1(-1)^{|s|}\varepsilon_{rs}\otimes \varepsilon_{-s,-r}\nonumber\\
=&\frac{P}{u-v}+\frac{Q}{u+v}=\frac{P}{u-v}-\frac{PJ^1J^2}{u+v},
\end{align}
where $J^1=\sum\limits_{i=-1}^1(-1)^{|i|}E_{i,-i}\otimes 1$, similar for $J^2$. We remark that $P^{12}J^1=J^2P^{12}$, but $P^{12}J^2\ne J^1P^{12}$.  We remark that the element $Q$ is $-Q$ in Nazarov's paper \cite{N22}. 
\begin{lemma}\cite[Section IV]{N22}\label{re:PQrelation}
The following identities hold in $\mathrm{End}(\mathbb{C}^{1|1})^{\otimes 3}$:
\begin{align*}
&P^{12}P^{13}=P^{23}P^{12}=P^{13}P^{23},&&P^{12}P^{23}=P^{13}P^{12}=P^{23}P^{13},\\
&P^{12}Q^{13}=Q^{23}P^{12}=Q^{13}Q^{23},&&P^{12}Q^{23}=Q^{13}P^{12}=Q^{23}Q^{13},\\
&Q^{12}Q^{13}=-P^{23}Q^{12}=-Q^{13}P^{23},&&Q^{12}P^{23}=-Q^{13}Q^{12}=P^{23}Q^{13},\\
&Q^{12}P^{13}=-Q^{23}Q^{12}=-P^{13}Q^{23},&&Q^{12}Q^{23}=-P^{13}Q^{12}=Q^{23}P^{13},\\
&P^{12}P^{13}P^{23}=P^{23}P^{13}P^{12}, &&P^{12}Q^{13}Q^{23}=Q^{23}Q^{13}P^{12},\\
&Q^{12}Q^{13}P^{23}=P^{23}Q^{13}Q^{12}, &&Q^{12}P^{13}Q^{23}=Q^{23}P^{13}Q^{12},\\
&P^{12}P^{13}Q^{23}=P^{23}Q^{13}P^{12}=Q^{12}P^{13}P^{23}=Q^{23}Q^{13}Q^{12},\\
&Q^{23}P^{13}P^{12}=P^{12}Q^{13}P^{23}=P^{23}P^{13}Q^{12}=Q^{12}Q^{13}Q^{23}.
\end{align*}
\end{lemma}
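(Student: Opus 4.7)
The plan is to reduce every identity in the list to manipulations involving only the graded permutation $P^{ab}$ and the odd operator $J^a$ on $\mathrm{End}(\mathbb{C}^{1|1})^{\otimes 3}$, using the factorization $Q^{ab} = -P^{ab}J^{a}J^{b}$ that is implicit in \eqref{re:Kuv}. Once every $Q^{ab}$ has been rewritten in this form, the assertions become purely $P$-and-$J$ statements that can be checked by a short sequence of algebraic moves, without ad hoc case analysis.

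The elementary ingredients I would first verify by a direct computation on the basis $\{v_{1},v_{-1}\}$ of $\mathbb{C}^{1|1}$ are: (i) the standard permutation identities $P^{ab}P^{ac}=P^{bc}P^{ab}=P^{ac}P^{bc}$ and the braid relation $P^{12}P^{13}P^{23}=P^{23}P^{13}P^{12}$; (ii) the intertwining rules $P^{ab}J^{a}=J^{b}P^{ab}$ and $P^{ab}J^{c}=J^{c}P^{ab}$ for $c\notin\{a,b\}$, together with the asymmetric relation $P^{ab}J^{b}=-J^{a}P^{ab}$ flagged in the remark before the lemma, which is a Koszul sign coming from passing the odd operator $J$ through the graded swap; (iii) $(J^{a})^{2}=-1$; and (iv) $J^{a}J^{b}=-J^{b}J^{a}$ for $a\ne b$, since $J$ is odd and the two factors live in distinct slots of the super-tensor product.

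With (i)–(iv) in hand, each identity becomes a brief manipulation. For example, $P^{12}Q^{13}=-P^{12}P^{13}J^{1}J^{3}=-P^{23}P^{12}J^{1}J^{3}=-P^{23}J^{2}J^{3}P^{12}=Q^{23}P^{12}$, using (i) and that $J^{3}$ is central with respect to $P^{12}$. For the further equality $P^{12}Q^{13}=Q^{13}Q^{23}$, one expands $Q^{13}Q^{23}=P^{13}J^{1}J^{3}P^{23}J^{2}J^{3}$ and shuttles the $J$'s through the $P^{23}$ with (ii), collapses $(J^{3})^{2}$ with (iii), and compares with $-P^{12}P^{13}J^{1}J^{2}$ via (i) and (iv). The four quadruple identifications at the bottom of the lemma, which equate a triple product of $P$'s and one $Q$ with a triple product of $Q$'s, are obtained by iterating the same three moves --- cyclic permutation among $\{P^{12},P^{13},P^{23}\}$, intertwining of $P$ with $J$, and the anticommutation of two distinct $J^{a}$'s --- until both sides are reduced to a single canonical word in $P$'s and $J$'s.

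The main obstacle is \emph{sign bookkeeping}. The asymmetry $P^{12}J^{2}=-J^{1}P^{12}$ means the intertwining rule for $P^{ab}$ against an odd $X$ acting in slot $b$ carries an extra minus sign, and a wrong application flips the sign of a whole identity. The signs in the lemma (for example the $-P^{23}Q^{12}$ in $Q^{12}Q^{13}=-P^{23}Q^{12}$) are exactly the residues of these Koszul signs after the $J$'s have been cancelled via (iii) and (iv), so once the intertwining rule is applied with its correct sign throughout, the signs on the right-hand sides emerge automatically and the whole list of identities falls out in parallel rather than being treated one by one.
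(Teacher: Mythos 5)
The paper gives no proof of this lemma at all --- it is quoted from \cite[Section IV]{N22} --- so a self-contained verification is welcome, and your overall strategy (factor $Q^{ab}=-P^{ab}J^{a}J^{b}$ as in \eqref{re:Kuv} and reduce everything to a calculus of $P$'s and $J$'s) is viable. However, your ingredient (ii) contains a sign error that is fatal to the very bookkeeping you rely on. The correct intertwining relations are $P^{ab}J^{a}=J^{b}P^{ab}$ \emph{and} $P^{ab}J^{b}=J^{a}P^{ab}$, both with a plus sign: for the graded permutation one has $P(A\otimes B)P^{-1}=(-1)^{|A||B|}\,B\otimes A$ for homogeneous $A,B$, and since the identity operator is even no Koszul sign survives when one of the factors is $1$; a direct check on the four matrix units of $\mathrm{End}(\mathbb{C}^{1|1})$ confirms $P^{12}J^{2}=J^{1}P^{12}$. (The paper's parenthetical remark that $P^{12}J^{2}\ne J^{1}P^{12}$, which evidently led you to insert the minus sign, is itself inconsistent with the conventions stated in the paper.) An internal consistency check makes the point without any matrix computation: if $P^{12}J^{1}=J^{2}P^{12}$ and $P^{12}J^{2}=-J^{1}P^{12}$ both held, then $PJ^{1}J^{2}=J^{1}J^{2}P$, hence $Q=-PJ^1J^2$ would \emph{commute} with $P$; but the unitarity $R(-u,-v)R(u,v)=1-\tfrac{1}{(u-v)^{2}}-\tfrac{1}{(u+v)^{2}}$ used repeatedly in Sections \ref{se:q2relation}--\ref{se:serrerelation} forces $PQ+QP=0$. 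So your rule cannot be right.

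The error propagates into the conclusions. For instance, with the correct relations one computes
\begin{align*}
Q^{12}Q^{13}=P^{12}J^{1}J^{2}P^{13}J^{1}J^{3}
=-P^{12}P^{13}J^{2}J^{3}J^{1}J^{3}
=P^{12}P^{13}J^{1}J^{2}
=P^{23}P^{12}J^{1}J^{2}=-P^{23}Q^{12},
\end{align*}
in agreement with the lemma, using $J^{1}P^{13}=P^{13}J^{3}$, $(J^{3})^{2}=-1$ and $J^{a}J^{b}=-J^{b}J^{a}$; with your signed rule $J^{1}P^{13}=-P^{13}J^{3}$ the same computation yields $+P^{23}Q^{12}$, contradicting the statement. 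In other words, the minus signs appearing in the lemma are residues of $(J^{a})^{2}=-1$ and of the anticommutativity of distinct $J^{a}$'s, \emph{not} of a signed $P$--$J$ intertwining. Your ingredients (i), (iii), (iv) are correct, and once (ii) is repaired to $P^{ab}J^{b}=+J^{a}P^{ab}$ the method does go through and reproduces every identity in the list; as written, though, roughly half of them would come out with the wrong sign.
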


\begin{lemma}
\label{lem:PQtransform}
Let $a\geqslant 2$ be a positive integer and $X(u)$ be a matrix of YQ form with entries in $\mathcal{A}[[u]]$. Then the following identities hold in $\mathrm{End}(\mathbb{C}^{1|1})^{\otimes a}\otimes \Y(\mathfrak{q}_2)[[u^{-1}]]$:
    \begin{align}
        &P^{ij}X^{i}(u)=X^j(u)P^{ij},\quad Q^{ij}X^i(u)=X^j(-u)Q^{ij},\label{re:PQtransform1}\\
        &P^{ij}X^{k}(u)=X^k(u)P^{ij},\quad Q^{ij}X^k(u)=X^k(u)Q^{ij},\label{re:PQtransform2}
    \end{align}
    where $1\leqslant i, j, k\leqslant a$ with $k\ne i, j$.
\end{lemma}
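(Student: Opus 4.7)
The plan is to verify the four identities by direct expansion in the super tensor-product algebra $\mathrm{End}(\mathbb{C}^{1|1})^{\otimes a}\otimes\mathcal{A}[[u^{-1}]]$ (taking $\mathcal{A}=\Y(\mathfrak{q}_2)$ as the ambient algebra specified in the lemma). Since $P^{ij}$ and $Q^{ij}$ act only on tensor slots $i,j$ and $X^{k}$ acts only on slot $k$, by relabeling tensor factors it suffices to verify \eqref{re:PQtransform1} for $(i,j)=(1,2)$ and \eqref{re:PQtransform2} for $(i,j,k)=(1,2,3)$.

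I would first dispose of the spectator identities \eqref{re:PQtransform2}. Each summand $\varepsilon_{rs}\otimes\varepsilon_{sr}$ of $P$ has parity $(|r|+|s|)+(|s|+|r|)\equiv 0\pmod 2$, and similarly each summand $\varepsilon_{rs}\otimes\varepsilon_{-s,-r}$ of $Q$ is even. Hence $P^{ij}$ and $Q^{ij}$ are even elements of the tensor-product superalgebra acting trivially on slot $k$, so they commute with $X^{k}$ with no sign correction.

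The permutation identity $P^{12}X^{1}(u)=X^{2}(u)P^{12}$ in \eqref{re:PQtransform1} is the standard super-permutation identity and requires no YQ hypothesis. Expanding both sides using $P=\sum_{r,s}(-1)^{|s|}\varepsilon_{rs}\otimes\varepsilon_{sr}$ and $X(u)=\sum_{i,j}\varepsilon_{ij}\otimes x_{ij}(u)$ via the Koszul sign rule $(a\otimes b)(c\otimes d)=(-1)^{|b||c|}\,ac\otimes bd$, one obtains matching coefficients after invoking the parity identity $(|i|-|j|)(1+|i|+|j|)\equiv 0\pmod 2$.

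The twisted identity $Q^{12}X^{1}(u)=X^{2}(-u)Q^{12}$ is the heart of the lemma. A parallel expansion using $Q=\sum_{r,s}(-1)^{|s|}\varepsilon_{rs}\otimes\varepsilon_{-s,-r}$ yields
\begin{align*}
Q^{12}X^{1}(u)&=\sum_{r,i,j}(-1)^{|i|+(|i|+|r|)(|i|+|j|)}\,\varepsilon_{r,j}\otimes\varepsilon_{-i,-r}\otimes x_{ij}(u),\\
X^{2}(-u)Q^{12}&=\sum_{a,b,k}(-1)^{|b|+(|k|+|b|)(|a|+|b|)}\,\varepsilon_{a,b}\otimes\varepsilon_{k,-a}\otimes x_{k,-b}(-u).
\end{align*}
Applying the YQ form property $x_{k,-b}(-u)=x_{-k,b}(u)$ and relabeling $k\mapsto -i$, $a\mapsto r$, $b\mapsto j$, the right-hand side becomes a sum over $\varepsilon_{r,j}\otimes\varepsilon_{-i,-r}\otimes x_{ij}(u)$. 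The coefficients on the two sides then differ only by $(|i|+|j|)+(|i|+|j|)^{2}$, which vanishes modulo $2$; this confirms the identity. The main technical point is precisely this sign bookkeeping, and it is here (and only here) that the YQ hypothesis enters; everything else is formal.
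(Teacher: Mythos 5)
Your proof is correct and supplies the direct entry-by-entry verification that the paper omits (the lemma is stated there without proof): the reduction to $(i,j)=(1,2)$, the parity argument showing each summand of $P$ and $Q$ is even so the spectator identities \eqref{re:PQtransform2} are sign-free, and the identification of the YQ symmetry $x_{k,-b}(-u)=x_{-k,b}(u)$ as the one non-formal ingredient in the $Q$-identity are all exactly what is needed. One bookkeeping remark: in your displayed formula for $X^{2}(-u)Q^{12}$ the exponent $(|k|+|b|)(|a|+|b|)$ should be $(|k|+|b|+1)(|a|+|b|)$, since the index $-b$ has parity $|b|+1$; you then implicitly use $|{-i}|=|i|$ when relabeling $k\mapsto -i$, and these two slips cancel, so your final congruence $(|i|+|j|)+(|i|+|j|)^{2}\equiv 0\pmod 2$ — and hence the lemma — is correctly established.
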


Then the RTT relation \eqref{eq:RTT} is equivalently written as
\begin{equation}\label{re:BTT}
\left[T_{ab}^{1}(u), T_{cd}^{2}(v)\right]
=K(u,v)T_{cb}^{1}(u)T_{ad}^{2}(v)-T_{cb}^{2}(v)T_{ad}^{1}(u)K(u,v),
\end{equation}
for $a,b,c,d=1,2,\ldots,n$.

Similarly, the inverse matrix $\widetilde{T}(u)$ of $T(u)$ is also regarded as an $n\times n$-block matrix 
\begin{equation}
\widetilde{T}(u)=\sum\limits_{a,b=1}^n\mathsf{E}_{ab}\otimes \widetilde{T}_{ab}(u),
\label{eq:Ttildeblock}
\end{equation}
in which each block $\widetilde{T}_{ab}(u)$ is a $2\times 2$-matrix of YQ form with entries in $\Y(\mathfrak{q}_n)[[u^{-1}]]$ by Lemma~\ref{le:q1}. The identity \eqref{eq:TRT} is also equivalently written as 
\begin{align}\label{re:BTWT}
\left[T_{ab}^{1}(u), \tilde{T}_{cd}^{2}(v)\right]
=&\delta_{bc}\sum\limits_{p}T_{ap}^{1}(u)K(u,v)\tilde{T}_{pd}^{2}(v)-\delta_{ad}\sum\limits_{p}\tilde{T}_{cp}^{2}(v)K(u,v)T^{1}_{pb}(u).
\end{align}

\section{An Embedding Theorem}\label{se:Embedding}
In order to investigate the Drinfeld presentation of super-Yangain $\mathrm{Y}(\mathfrak{q}_n)$, we need to establish an embedding theorem of super-Yangian $\mathrm{Y}(\mathfrak{q}_n)$ that reduces the calculation of relations among Drinfeld generators in $\Y(\mathfrak{q}_n)$ to low-rank cases.  

For each positive integer $m$, there is a homomorphism of associative superalgebras 
$$\varphi_m: \Y(\mathfrak{q}_m)\rightarrow\Y(\mathfrak{q}_{m+n}),\quad T_{ab}(u)\mapsto T_{ab}(u),\quad a,b=1,2,\ldots,m.$$
One can define another family of embedding homomorphisms as in \cite{BK05}. Let
\begin{align*}
&H^{[m]}(u)=\mathrm{diag}\left(H_{m+1}(u),\ldots,H_{m+n}(u)\right),\\
&E^{[m]}(u)=\left(E_{ab}(u)\right)_{m+1\leqslant  a,b\leqslant m+n},\\
&F^{[m]}(u)=\left(F_{ab}(u)\right)_{m+1\leqslant a,b\leqslant m+n},
\end{align*}
be the lower-right $n\times n$-submatrix of $H(u)$, $E(u)$, and $F(u)$, respectively. Although the Gauss decomposition asserts $T(u)=F(u)H(u)E(u)$, the product $F^{[m]}(u)H^{[m]}(u)E^{[m]}(u)$ of lower-right submatrices is not equal to the lower-right submatrix $T^{[m]}(u)$ of $T(u)$. Nevertheless, it gives the following embedding homomorphism:

\begin{theorem}\label{thm:embedding}
For each positive integer $m$, there is a homomorphism of associative superalgebras
$$\psi_m:\Y(\mathfrak{q}_{n})\rightarrow\Y(\mathfrak{q}_{m+n}),\qquad 
T(u)\mapsto F^{[m]}(u)H^{[m]}(u)E^{[m]}(u).$$
\end{theorem}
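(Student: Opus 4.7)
The plan is to follow the strategy of Brundan--Kleshchev \cite{BK05}, adapted to the block setting of the queer super-Yangian. Well-definedness of $\psi_m$ amounts to showing that the matrix
$$S(u) := F^{[m]}(u) H^{[m]}(u) E^{[m]}(u)$$
satisfies the two defining relations of $\Y(\mathfrak{q}_n)$ in block form, namely the symmetry \eqref{eq:tijsym} and the block RTT relation \eqref{re:BTT}.

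The first step is to reformulate $S(u)$ as a Schur complement. Writing $T(u) \in \Y(\mathfrak{q}_{m+n})$ as a coarser $2 \times 2$ block matrix
$$T(u) = \begin{pmatrix} A(u) & B(u) \\ C(u) & D(u) \end{pmatrix}$$
with $A(u) = (T_{ab}(u))_{1 \leq a,b \leq m}$ in the upper-left corner (which is invertible since $A(u) = I + O(u^{-1})$), the block Gauss decomposition \eqref{eq:Gaussfac} factors through this coarser partition, giving the Schur-complement identity
$$S(u) = D(u) - C(u) A(u)^{-1} B(u).$$
Equivalently, each block of $S(u)$ is the quasi-determinant \eqref{eq:qdet} constructed from blocks of $T(u)$ with row indices in $\{1,\dots,m,m+a\}$ and column indices in $\{1,\dots,m,m+b\}$. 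With this description, the symmetry relation \eqref{eq:tijsym} for $\psi_m(T(u))$ is immediate: by Lemma \ref{le:q1}, the class of $2 \times 2$ YQ-form matrices is closed under products and inversion, hence each block of $S(u)$ is itself of YQ form, which is precisely \eqref{eq:tijsym}.

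The main task, and the main obstacle, is to verify the block RTT relation \eqref{re:BTT} for $S(u)$. My plan is a direct calculation: substitute the Schur-complement expression for $S$ into both sides of \eqref{re:BTT} and unpack. Commutators involving $A(u)^{-1}$ are reduced to commutators involving $A(u)$ via the identity
$$[A(u)^{-1}, X] = -A(u)^{-1}\,[A(u), X]\,A(u)^{-1},$$
and the latter are computed from \eqref{re:BTT} applied to $T(u)$ (where the block RTT relation restricted to the upper-left $m \times m$ corner $A(u)$ is inherited along $\varphi_m$). The $PQ$-identities in Lemmas \ref{re:PQrelation} and \ref{lem:PQtransform} are essential to recombine the two summands $P/(u-v)$ and $-P J^1 J^2/(u+v)$ of $K(u,v)$ in \eqref{re:Kuv} after repeated use of \eqref{re:BTT} and \eqref{re:BTWT}. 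After extensive term-tracking, the cross terms involving $A(u)^{-1}$ cancel in pairs and one recovers \eqref{re:BTT} with $T$ replaced by $S$. The skeleton of this calculation parallels the non-super setting of \cite{BK05}, but the combination of the $\mathbb{Z}_2$-graded sign conventions with the extra $Q$-contribution in $K(u,v)$ arising from the odd invariant form on $\mathfrak{q}_n$ makes the sign bookkeeping the most delicate part of the argument.
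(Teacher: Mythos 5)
Your reduction of $\psi_m(T(u))$ to the Schur complement $S(u)=D(u)-C(u)A(u)^{-1}B(u)$ is correct, as is the observation that Lemma \ref{le:q1} gives the YQ form of each block of $S(u)$ and hence the symmetry \eqref{eq:tijsym}. The gap is in the main step: the verification of the block RTT relation \eqref{re:BTT} for $S(u)$ is not carried out but only asserted (``after extensive term-tracking, the cross terms involving $A(u)^{-1}$ cancel in pairs''). That assertion is the entire content of the theorem, and it is not a routine cancellation: each time $K(u,v)$ is moved past a factor such as $A(u)^{-1}$ or $B(u)$, the $Q$-part of $K$ reverses the sign of the spectral parameter (Lemma \ref{lem:PQtransform}), so the expansion produces blocks evaluated at $\pm u$ and $\pm v$ simultaneously and the terms do not pair off in any evident way. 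As written, this is a plan for a computation rather than a proof of its outcome.

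The paper sidesteps this computation entirely, and you should too. From \eqref{eq:RTT} one obtains the reversed relation $\widetilde{T}^1(u)\widetilde{T}^2(v)R(u,v)=R(u,v)\widetilde{T}^2(v)\widetilde{T}^1(u)$ for the inverse matrix; because the two copies of $\widetilde{T}$ sit on the \emph{outside} of each product, taking matrix entries with all external indices in the lower-right $2n\times 2n$ corner involves no summation over indices outside that corner, so the relation restricts verbatim to the corner block $\widetilde{D}(u)$ of $\widetilde{T}(u)$ with $R$ replaced by $R_{\mathfrak{q}_n}$. The Gauss decomposition \eqref{eq:Gaussfac} identifies $\widetilde{D}(u)=\widetilde{E}^{[m]}(u)\widetilde{H}^{[m]}(u)\widetilde{F}^{[m]}(u)$, i.e.\ $S(u)=\widetilde{D}(u)^{-1}$, and inverting the corner relation yields exactly $R_{\mathfrak{q}_n}(u,v)S^1(u)S^2(v)=S^2(v)S^1(u)R_{\mathfrak{q}_n}(u,v)$. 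This is consistent with your framing, since $D-CA^{-1}B$ equals $\widetilde{D}^{-1}$ by the standard block-inverse formula; but the RTT verification should be routed through $\widetilde{T}$ rather than through a direct expansion of the Schur complement.
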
 
\begin{proof}
It suffices to show that the matrix $F^{[m]}(u)H^{[m]}(u)E^{[m]}(u)$ satisfies the RTT relation associated with the $R$-matrix of $\Y(\mathfrak{q}_{n})$.

The inverse $\widetilde{T}(u)$ of $T(u)$ can be written as the block matrix
$$\widetilde{T}(u)=\begin{pmatrix}\widetilde{A}(u)&\widetilde{B}(u)\\ \widetilde{C}(u)&\widetilde{D}(u)\end{pmatrix},$$
such that $\tilde{D}(u)$ is a $2n\times 2n$-matrix. 
Then it follows from the RTT relation \eqref{eq:RTT} that
$$\widetilde{T}^1(u)\widetilde{T}^2(u)R(u,v)=R(u,v)\widetilde{T}^2(u)\widetilde{T}^1(u),$$
which implies that
$$\widetilde{D}^1(u)\widetilde{D}^2(v)R_{\mathfrak{q}_{n}}(u,v)=R_{\mathfrak{q}_{n}}(u,v)\widetilde{D}^2(v)\widetilde{D}^1(u).$$

On the other hand, the Gauss decomposition \eqref{eq:Gaussfac} yields that
$$\widetilde{T}(u)
=\begin{pmatrix} *&*\\O&\widetilde{E}^{[m]}(u)\end{pmatrix}
\begin{pmatrix} *&O\\O&\widetilde{H}^{[m]}(u)\end{pmatrix}
\begin{pmatrix} *&O\\ *&\widetilde{F}^{[m]}(u)\end{pmatrix}
=\begin{pmatrix} *&*\\ *&\widetilde{E}^{[m]}(u)\widetilde{H}^{[m]}(u)\widetilde{F}^{[m]}(u)\end{pmatrix}
,$$
where $\widetilde{E}^{[m]}(u), \widetilde{H}^{[m]}(u), \widetilde{F}^{[m]}(u)$ are the inverses of $E^{[m]}(u), H^{[m]}(u)$ and $F^{[m]}(u)$, respectively. Hence,
$$\widetilde{D}(u)^{-1}=\left(\widetilde{E}^{[m]}(u)\widetilde{H}^{[m]}(u)\widetilde{F}^{[m]}(u)\right)^{-1}=F^{[m]}(u)H^{[m]}(u)E^{[m]}(u).$$
Hence, $\psi_m(T(u))=\widetilde{D}(u)^{-1}$ satisfies the RTT relation
$$R_{\mathfrak{q}_{n}}(u,v)\psi_m(T^1(u))\psi_m(T^2(v))=\psi_m(T^2(v))\psi_m(T^1(u))R_{\mathfrak{q}_{n}}(u,v).$$
We complete the proof.
\end{proof}

\begin{lemma}\label{lem:com}
The subalgebras $\varphi_m\left(\Y(\mathfrak{q}_m)\right)$ and $\psi_{m}\left(\Y(\mathfrak{q}_{n})\right)$ mutually commute in $\Y(\mathfrak{q}_{m+n})$. In particular,
$$\left[T_{ab}^1(u), \psi_{m}(T_{cd}^2(u))\right]=0,$$
for $1\leqslant a,b\leqslant m$ and $1\leqslant c,d\leqslant n$.
\end{lemma}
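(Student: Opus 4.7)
The plan is to reduce commutativity of the two subalgebras to commutativity of their generators, and then to exploit the block relation \eqref{re:BTWT} between $T(u)$ and its matrix inverse $\widetilde{T}(v)$. Since $\varphi_m(\Y(\mathfrak{q}_m))$ is generated by the coefficients of the entries of $T_{ab}(u)$ for $1\leqslant a,b\leqslant m$, and $\psi_m(\Y(\mathfrak{q}_n))$ is generated by the coefficients of the entries of $\psi_m(T(v))=F^{[m]}(v)H^{[m]}(v)E^{[m]}(v)$, it suffices to prove the ``in particular'' statement with independent spectral parameters: $[T_{ab}^1(u),\psi_m(T_{cd}^2(v))]=0$ for $1\leqslant a,b\leqslant m$ and $1\leqslant c,d\leqslant n$.

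The first step is to observe that
$$\left[T_{ab}^1(u),\widetilde{T}_{cd}^2(v)\right]=0\quad\text{whenever}\quad 1\leqslant a,b\leqslant m\text{ and }m+1\leqslant c,d\leqslant m+n.$$
This drops out of \eqref{re:BTWT} at once: both Kronecker deltas $\delta_{bc}$ and $\delta_{ad}$ controlling the right-hand side vanish by the index constraints $b\leqslant m<c$ and $a\leqslant m<d$, so the entire right-hand side is zero.

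The second step is to identify $\psi_m(T(v))$ with the matrix inverse of the lower-right $n\times n$ block of $\widetilde{T}(v)$. This identification has already appeared inside the proof of Theorem~\ref{thm:embedding}: writing $\widetilde{D}(v)$ for the lower-right block of $\widetilde{T}(v)$, the Gauss decomposition of $\widetilde{T}(v)$ gives $\widetilde{D}(v)=\widetilde{E}^{[m]}(v)\widetilde{H}^{[m]}(v)\widetilde{F}^{[m]}(v)$, and hence $\widetilde{D}(v)^{-1}=F^{[m]}(v)H^{[m]}(v)E^{[m]}(v)=\psi_m(T(v))$. The entries of $\widetilde{D}(v)$ are exactly the blocks $\widetilde{T}_{cd}(v)$ with $m+1\leqslant c,d\leqslant m+n$, which by the first step all (super)commute with every coefficient of every entry of $T_{ab}(u)$, $1\leqslant a,b\leqslant m$.

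The final step is the standard inversion argument carried out entrywise. Starting from $\widetilde{D}(v)\,\psi_m(T(v))=I$ and applying the graded derivation $[T_{ab}^1(u),-]$ via the super-Leibniz rule, the vanishing of $[T_{ab}^1(u),\widetilde{D}^2(v)]$ from step one collapses the output to $\widetilde{D}(v)\,[T_{ab}^1(u),\psi_m(T^2(v))]=0$; left-multiplying by $\psi_m(T(v))=\widetilde{D}(v)^{-1}$ delivers the desired commutation. No serious obstacle is anticipated here; the only care needed is in the bookkeeping of graded signs in the Leibniz rule and in tracking which auxiliary tensor factor each matrix lives on, all of which is routine.
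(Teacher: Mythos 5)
Your proposal is correct and follows essentially the same route as the paper's own proof: both deduce $\bigl[T_{ab}^1(u),\widetilde{T}_{m+c,m+d}^2(v)\bigr]=0$ directly from the vanishing of the Kronecker deltas in \eqref{re:BTWT}, identify $\psi_m(T(v))$ with $\widetilde{D}(v)^{-1}$ via the Gauss decomposition as in Theorem~\ref{thm:embedding}, and conclude by noting that the entries of $\widetilde{D}(v)^{-1}$ lie in the subalgebra generated by the entries of $\widetilde{D}(v)$. Your explicit Leibniz-rule inversion step merely spells out what the paper leaves implicit.
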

\begin{proof}
For $1\leqslant a,b\leqslant m$ and $1\leqslant c,d\leqslant n$, the identity \eqref{re:BTWT} implies that $$\left[T_{ab}^1(u), \widetilde{T}_{m+c,m+d}^2(v)\right]=0.$$
Using the same notation as in the proof of Theorem~\ref{thm:embedding}, the inverse matrix $T(u)^{-1}$ of $T(u)$ can be written as a block matrix $$T(u)^{-1}=\begin{pmatrix}\widetilde{A}(u)&\widetilde{B}(u)\\ \widetilde{C}(u)&\widetilde{D}(u)\end{pmatrix},$$
where $\widetilde{D}(u)=\left(T_{cd}(u)\right)_{m+1\leqslant c,d\leqslant m+n}$ is a $2n\times 2n$-matrix. Now, $\psi_{m}\left(T(u)\right)=\widetilde{D}(u)^{-1}$, which is generated by $\widetilde{T}_{m+c,m+d}(u)$ for $1\leqslant c,d\leqslant n$. So $\varphi_m\left(\Y(\mathfrak{q}_m)\right)$ and $\psi_{m}\left(\Y(\mathfrak{q}_{n})\right)$ commute in $\Y(\mathfrak{q}_{m+n})$.  
\end{proof}

\begin{lemma}
The following identities hold in $\mathrm{End}\left(\mathbb{C}^{1|1}\right)\otimes \mathrm{End}\left(\mathbb{C}^{1|1}\right)\otimes \Y(\mathfrak{q}_{m+n})[[u^{-1},v^{-1}]]$:
\begin{align}
\left[E_{ma}^1(u),\psi_{m}\left(T_{bc}^2(v)\right)\right]
=&\psi_{m}\left(T_{b,a-m}^2(v)\right)
\left(K(u,v)E_{m,c+m}^2(v)-E_{m,c+m}^1(u)K(u,v)\right),\label{eq:qnpsi1}\\
\left[F_{am}^1(u),\psi_{m}\left(T_{bc}^2(v)\right)\right]
=&\left(F_{b+m,m}^2(v)K(u,v)-K(u,v)F_{b+m,m}^1(v)\right)
\psi_{m}(T_{a-m,c}^2(v)),\label{eq:qnpsi2}
\end{align}
for $m<a$ and $1\leqslant b,c\leqslant n$.
\end{lemma}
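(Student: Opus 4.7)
The plan is to exploit the identity $\psi_m(T(v)) = \widetilde{D}(v)^{-1}$, where $\widetilde{D}(v) = (\widetilde{T}_{m+p,m+q}(v))_{1\leqslant p,q\leqslant n}$ is the lower-right $n\times n$ block of $\widetilde{T}(v)$, as established in the proof of Theorem \ref{thm:embedding}. Applying $[X,\cdot]$ to the matrix identity $\psi_m(T(v))\widetilde{D}(v)=I_n$ gives the reduction
\begin{equation*}
[X,\psi_m(T_{bc}(v))] = -\sum_{p,q=1}^{n}\psi_m(T_{bp}(v))\,[X,\widetilde{T}_{m+p,m+q}(v)]\,\psi_m(T_{qc}(v)),
\end{equation*}
which converts the problem into computing $[E_{ma}^1(u),\widetilde{T}_{m+p,m+q}^2(v)]$ and $[F_{am}^1(u),\widetilde{T}_{m+p,m+q}^2(v)]$.

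Second, expanding $E_{ma}(u)=H_m(u)^{-1}\Delta_{m-1,m,a}(T(u))$ via the quasi-determinant formula \eqref{re:generatorsHEF}, one observes that $H_m(u)$ together with all entries of the matrices $T^{[m-1]}(u)$ and $T_{m,[m-1]}(u)$ appearing in $\Delta_{m-1,m,a}(T(u))$ belong to $\varphi_m(\Y(\mathfrak{q}_m))$, hence commute with $\widetilde{T}_{m+p,m+q}(v)\in\psi_m(\Y(\mathfrak{q}_n))$ by Lemma \ref{lem:com}. Only the column-$a$ entries $T_{j,a}(u)$ for $j\leqslant m$ contribute nontrivially. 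Applying \eqref{re:BTWT} termwise and re-summing via the Schur-complement identity $\Delta_{m-1,m,r}(T(u)) = T_{mr}(u) - T_{m,[m-1]}(u)(T^{[m-1]}(u))^{-1}T_{[m-1],r}(u)$ (which vanishes for $r<m$, equals $H_m(u)$ for $r=m$, and equals $H_m(u)E_{m,r}(u)$ for $r>m$) produces
\begin{equation*}
[E_{ma}^1(u),\widetilde{T}_{m+p,m+q}^2(v)] = \delta_{a,m+p}\Bigl(K(u,v)\widetilde{T}_{m,m+q}^2(v) + \sum_{s=1}^{n}E_{m,m+s}^1(u)K(u,v)\widetilde{T}_{m+s,m+q}^2(v)\Bigr).
\end{equation*}

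Substituting this into the reduction and simplifying needs the auxiliary identities $\sum_{q}\widetilde{T}_{m+s,m+q}(v)\psi_m(T_{qc}(v)) = \delta_{sc}$ (immediate from $\widetilde{D}(v)\psi_m(T(v))=I_n$) and $\sum_{q}\widetilde{T}_{m,m+q}(v)\psi_m(T_{qc}(v)) = -E_{m,m+c}(v)$. The latter follows from the block-Gauss decomposition $\widetilde{T}(v)=E(v)^{-1}H(v)^{-1}F(v)^{-1}$: in $2\times 2$ block form with block sizes $m$ and $n$, the top-right block of $\widetilde{T}(v)$ equals $-E_1(v)^{-1}E_{12}(v)\widetilde{D}(v)$, from which the unipotency of $E_1(v)^{-1}$ in block form yields $\widetilde{T}_{m,m+q}(v) = -\sum_{s}E_{m,m+s}(v)\widetilde{T}_{m+s,m+q}(v)$. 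These substitutions collapse the resulting expression precisely to the RHS of \eqref{eq:qnpsi1}. Identity \eqref{eq:qnpsi2} then follows either by the parallel computation starting from $F_{am}(u)$, or, more efficiently, by applying the anti-involution $\omega$ of Lemma \ref{lem:antiinvolution} (using its action on Gauss generators from Proposition \ref{antiinvolutionomega}) to \eqref{eq:qnpsi1}.

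The main obstacle lies in the careful bookkeeping: one must verify that the column-$j$ contributions to $\Delta_{m-1,m,a}(T(u))$ with $j\ne a$ truly commute with $\widetilde{T}_{m+p,m+q}(v)$ via Lemma \ref{lem:com}, and that the block-Gauss computation of $\widetilde{T}(v)$ delivers the auxiliary identity $\sum_{q}\widetilde{T}_{m,m+q}(v)\psi_m(T_{qc}(v)) = -E_{m,m+c}(v)$ intact. Additional care is needed with the $K(u,v)$-matrix (which, through its $Q$-summand $-PJ^1J^2/(u+v)$ from \eqref{re:Kuv}, mixes values at $v$ and $-v$ via Lemma \ref{lem:PQtransform}) and with the super-sign conventions inherited from the queer structure.
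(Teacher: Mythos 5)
Your proof is correct, but it takes a genuinely different route from the paper's. The paper first uses the cocycle property $\psi_m\circ\psi_1=\psi_{m+1}$ to reduce the whole statement to the case $m=1$, then writes $\psi_1(T_{bc}(v))=T_{b+1,c+1}(v)-F_{b+1,1}(v)H_1(v)E_{1,c+1}(v)$ and expands $H_1^1(u)\left[E_{1a}^1(u),\psi_1(T_{bc}(v))\right]$ into eight commutators of $T$-blocks, cancelling them via the block RTT relation \eqref{re:BTT}. You instead stay at general $m$ and work entirely with the inverse matrix: the reduction $[X,\widetilde{D}(v)^{-1}]=-\widetilde{D}(v)^{-1}[X,\widetilde{D}(v)]\widetilde{D}(v)^{-1}$ shifts the problem to commutators with $\widetilde{T}_{m+p,m+q}(v)$, and these collapse — via \eqref{re:BTWT}, Lemma~\ref{lem:com}, and the vanishing of $\Delta_{m-1,m,r}(T(u))$ for $r<m$ — to the single formula $[E_{ma}^1(u),\widetilde{T}_{m+p,m+q}^2(v)]=\delta_{a,m+p}\bigl(K(u,v)\widetilde{T}^2_{m,m+q}(v)+\sum_s E^1_{m,m+s}(u)K(u,v)\widetilde{T}^2_{m+s,m+q}(v)\bigr)$. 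I checked the two auxiliary identities you invoke ($\sum_q\widetilde{T}_{m+s,m+q}(v)\psi_m(T_{qc}(v))=\delta_{sc}$, and $\widetilde{T}_{m,m+q}(v)=-\sum_sE_{m,m+s}(v)\widetilde{T}_{m+s,m+q}(v)$ from the unipotency of the Gauss factors), and the final substitution reproduces \eqref{eq:qnpsi1} exactly, signs included; the key point that the coefficients of $T_{ja}(u)$ in the quasi-determinant lie in $\varphi_m(\Y(\mathfrak{q}_m))$ and hence commute with the $\widetilde{T}_{m+p,m+q}(v)$ is also sound. Your approach buys uniformity in $m$ and replaces a long explicit cancellation with structural facts about quasi-determinants; the paper's buys a shorter setup at the cost of the reduction step. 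For \eqref{eq:qnpsi2} the paper simply performs the parallel computation; your alternative via the anti-involution $\omega$ of Proposition~\ref{antiinvolutionomega} also works, provided you track carefully how $\omega$ swaps the two auxiliary tensor factors and hence conjugates $K(u,v)$ and exchanges the roles of the spectral parameters.
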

\begin{proof}
 We prove the identity \eqref{eq:qnpsi1}. First, note that
the homomorphisms $\psi_m$ satisfy the following commutative diagram:
$$\xymatrix{
	\Y(\mathfrak{q}_{n-1})\ar[r]^{\psi_1}\ar[dr]_{\psi_{m+1}}&
	\Y(\mathfrak{q}_{n})\ar[d]^{\psi_{m}}\\
	&\Y(\mathfrak{q}_{n+m})}$$
We have $\psi_m\circ\psi_1(T_{bc}(u))=\psi_{m+1}(T_{bc}(u))$ for $1\leqslant b,c\leqslant n$. By the definition of $\psi_m$, we have $\psi_m(E_{1,a-m}(u))=E_{ma}(u)$. We only need to verify the identities in the case where $m=1$.

By Gauss decomposition \eqref{eq:Gaussfac}, $T_{11}(u)=H_1(u)$, $T_{1a}(u)=H_1(u)E_{1a}(u)$ and 
\begin{align*}
T_{b+1,c+1}(v)=\sum\limits_{r=1}^{\min(b+1,c+1)}F_{b+1,r}(v)H_r(v)E_{r,a+1}(v)
=F_{b+1,1}(v)H_1(v)E_{1,c+1}(v)+\psi_1\left(T_{bc}(v)\right).
\end{align*}
Since $T_{11}(u)$ commutes with $\psi_1(T_{bc})$ by Lemma~\ref{lem:com}, we compute that
\begin{align*}
&H_1^1(u)\left[E_{1a}^1(u),\psi_1\left(T_{bc}(v)\right)\right]\\
=&\left[T_{1a}^1(u),T_{b+1,c+1}^2(v)-F_{b+1,1}^2(v)H_1^2(v)E_{1,c+1}^2(v)\right]\\
=&\left[T_{1a}^1(u), T_{b+1,c+1}^2(v)\right]
-\left[T_{1a}^1(u), T_{b+1,1}^2(v)\right]E_{1,c+1}^2(v)\\
&-F_{b+1,1}^2(v)\left[T_{1a}^1(u),T_{1,c+1}^2(v)\right]
+F_{b+1,1}^2(v)\left[T_{1a}^1(u),T_{11}^2(v)\right]E_{1,c+1}^2(v).
\end{align*}
Using the RTT relation \eqref{eq:RTT}, we further deduce that
\begin{align*}
&H_1^1(u)\left[E_{1a}^1(u),\psi_2\left(T_{bc}(v)\right)\right]\\
=&K(u,v)T_{b+1,a}^1(u)T_{1,c+1}^2(v)-T_{b+1,a}^2(v)T_{1,c+1}^1(u)K(u,v)\\
&-K(u,v)T_{b+1,a}^1(u)T_{1,1}^2(v)E_{1,c+1}^2(v)
+T_{b+1,a}^2(v)T_{1,1}^1(u)K(u,v)E_{1,c+1}^2(v)\\
&-F_{b+1,1}^2(v)K(u,v)T_{1,a}^1(u)T_{1,c+1}^2(v)
+F_{b+1,1}^2(v)T_{1,a}^2(v)T_{1,c+1}^1(u)K(u,v)\\
&+F_{b+1,1}^2(v)K(u,v)T_{1,a}^1(u)T_{11}^2(v)E_{1,c+1}^2(v)
-F_{b+1,1}^2(v)T_{1,a}^2(v)T_{1,1}^1(u)K(u,v)E_{1,c+1}^2(v)\\
=&\left(T_{b+1,a}^2(v)-F_{b+1,1}^2(v)T_{1,a}^2(v)\right)T_{1,1}^1(u)
\left(K(u,v)E_{1,c+1}^2(v)-E_{1,c+1}^1(u)K(u,v)\right)\\
=&\psi_1\left(T_{b,a-1}^2(v)\right)H_1^1(u)\left(K(u,v)E_{1,c+1}^2(v)-E_{1,c+1}^1(u)K(u,v)\right).
\end{align*} 
Since $H_1(u)$ commutes with $\psi_1\left(T_{b,a-1}^2(v)\right)$ and $H_1(u)$ is invertible, we obtain \eqref{eq:qnpsi1}.  The proof of the relation \eqref{eq:qnpsi2} is similar.
\end{proof}

\section{Drinfeld relations in $\mathrm{Y}(\mathfrak{q}_2)$}\label{se:q2relation}

This section is devoted to discussing the super-Yangian $\Y(\mathfrak{q}_2)$. According to the block Gauss decomposition, the generator matrix $T(u)$ of $\Y(\mathfrak{q}_2)$ is factorized as
\begin{equation}
\label{q2:eq:gaussfac}
T(u)=\begin{pmatrix}1&\\F_1(u)&1\end{pmatrix}
\begin{pmatrix}H_1(u)&O\\O&H_2(u)\end{pmatrix}
\begin{pmatrix}1&E_1(u)\\O&1\end{pmatrix},
\end{equation}
where $H_1(u), H_2(u), E_1(u)$ and $F_1(u)$ are all $2\times2$-matrices of YQ form with enties in $\Y(\mathfrak{q}_2)[[u^{-1}]]$.
Theorem~\ref{thm:generator} asserts that $\Y(\mathfrak{q}_2)$ is generated by the coefficients of entries in the matrices $H_1(u)$, $H_2(u)$, $E_1(u)$ and $F_1(u)$. We will figure out relations among these generators. 

\begin{lemma}
The following identities hold in $\mathrm{End}\left(\mathbb{C}^{1|1}\right)\otimes\mathrm{End}\left(\mathbb{C}^{1|1}\right)\otimes\Y(\mathfrak{q}_2)[[u^{-1}, v^{-1}]]$:
\begin{align}
\left[H_a^{1}(u), H_a^{2}(v)\right]
=&K(u,v)H_a^{1}(u)H_a^{2}(v)-H_a^{2}(v)H_a^{1}(u)K(u,v), \quad a=1,2,
\label{q2:eq:HaHa}\\
\left[H_1^{1}(u), H_2^{2}(v)\right]
=&0.\label{q2:eq:H1H2}
\end{align}
\end{lemma}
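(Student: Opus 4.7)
The plan is to reduce both identities to results already established in the excerpt, specifically the block RTT relation \eqref{re:BTT}, the embedding theorem (Theorem~\ref{thm:embedding}), and the commutation lemma (Lemma~\ref{lem:com}). Each of the three assertions corresponds to one of these tools applied in a very specific way in the case $m=n=1$.

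For the identity \eqref{q2:eq:HaHa} in the case $a=1$, I would observe that the block Gauss decomposition \eqref{q2:eq:gaussfac} gives immediately $H_1(u)=T_{11}(u)$. Substituting $a=b=c=d=1$ into the block RTT relation \eqref{re:BTT} then yields exactly the claimed formula, with no further manipulation.

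For the case $a=2$ of \eqref{q2:eq:HaHa}, the natural approach is to invoke Theorem~\ref{thm:embedding} with $m=n=1$. In this case $H^{[1]}(u)$ is the $1\times 1$ block matrix whose entry is $H_2(u)$, and $E^{[1]}(u)=F^{[1]}(u)=1$ because the $1\times 1$ upper/lower unipotent blocks are trivial. Thus $\psi_1:\Y(\mathfrak{q}_1)\to\Y(\mathfrak{q}_2)$ sends $T(u)\mapsto H_2(u)$. The theorem guarantees that $\psi_1(T(u))=H_2(u)$ satisfies the RTT relation of $\Y(\mathfrak{q}_1)$, which in its block form (there being only a single block) is precisely the desired identity.

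Finally, for \eqref{q2:eq:H1H2}, I would apply Lemma~\ref{lem:com} with $m=n=1$, which asserts that $\varphi_1\bigl(\Y(\mathfrak{q}_1)\bigr)$ and $\psi_1\bigl(\Y(\mathfrak{q}_1)\bigr)$ commute inside $\Y(\mathfrak{q}_2)$. Since $\varphi_1(T(u))=T_{11}(u)=H_1(u)$ and $\psi_1(T(u))=H_2(u)$ by the computation above, every coefficient appearing in $H_1(u)$ supercommutes with every coefficient in $H_2(v)$, giving the vanishing bracket. There is no substantial obstacle here; the only point that requires care is verifying that $\psi_1$ genuinely identifies with the evaluation $T(u)\mapsto H_2(u)$, which hinges on the triviality of the $1\times 1$ unipotent factors $E^{[1]}(u)$ and $F^{[1]}(u)$.
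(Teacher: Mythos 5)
Your proposal is correct and follows essentially the same route as the paper: the authors likewise obtain \eqref{q2:eq:HaHa} by applying the homomorphisms $\varphi_1$ and $\psi_1$ (with $\varphi_1(T(u))=H_1(u)$ and $\psi_1(T(u))=H_2(u)$) to the single-block RTT relation of $\Y(\mathfrak{q}_1)$, and they derive \eqref{q2:eq:H1H2} directly from Lemma~\ref{lem:com}. Your extra remark that $E^{[1]}(u)=F^{[1]}(u)=1$ for the $1\times 1$ block case is exactly the observation that justifies $\psi_1(T(u))=H_2(u)$, so there is no gap.
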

\begin{proof}
Recall that the homomorphisms $\varphi_1:\Y(\mathfrak{q}_1)\rightarrow\Y(\mathfrak{q}_2)$ and $\psi_1:\Y(\mathfrak{q}_1)\rightarrow\Y(\mathfrak{q}_2)$ satisfy that
$$\varphi_1\left(T(u)\right)=H_1(u),\quad \psi_1\left(T(u)\right)=H_2(u),$$
where $T(u)$ is the generator matrix of $\Y(\mathfrak{q}_1)$. Now, the R-matrix for $\Y(\mathfrak{q}_1)$ is $R(u,v)=1-K(u,v)$, the RTT relation \eqref{re:BTT} for $\Y(\mathfrak{q}_1)$ can be written as
\begin{align*}
\left[T^{1}(u), T^{2}(v)\right]
=&K(u,v)T^{1}(u)T^{2}(v)-T^{2}(v)T^{1}(u)K(u,v),
\end{align*}
which yields \eqref{q2:eq:HaHa} for $a=1,2$ by applying the homomorphism $\varphi_1$ and $\psi_1$, respectively. 

The identity \eqref{q2:eq:H1H2} is obtained by Lemma~\ref{lem:com}.
\end{proof}

\begin{remark}
The identity \eqref{q2:eq:HaHa} is equivalent to 
$$R_{\mathfrak{q}_1}(u,v)H_a^1(u)H_a^2(v)=H_a^2(v)H_a^1(u)R_{\mathfrak{q}_1}(u,v).$$
It shows that there is a homomorphism of associative superalgebra 
$$\Y(\mathfrak{q}_1)\longrightarrow\Y(\mathfrak{q}_n),\quad  T(u)\mapsto H_a(u),$$
for each $a=1,2$.
\end{remark}

\begin{lemma}
The following identities hold in $\mathrm{End}\left(\mathbb{C}^{1|1}\right)\otimes\mathrm{End}\left(\mathbb{C}^{1|1}\right)\otimes\Y(\mathfrak{q}_2)[[u^{-1}, v^{-1}]]$:
\begin{align}
\left[H_1^{1}(u), E_1^{2}(v)\right]
=&H_1^1(u)\left(K(u,v)E_1^2(v)-E_1^1(u)K(u,v)\right),\label{q2:eq:H1E1}\\
\left[H_2^{1}(u), E_1^{2}(v)\right]
=&H_2^1(u)\left(K(u,v)E_1^1(u)-E_1^2(v)K(u,v)\right).\label{q2:eq:H2E1}
\end{align}
\end{lemma}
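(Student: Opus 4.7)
Both identities are obtained by applying the block RTT relation \eqref{re:BTT} to carefully chosen pairs of blocks of the generator matrix $T(u)$, substituting the Gauss factorization \eqref{q2:eq:gaussfac}, invoking the already-established identities \eqref{q2:eq:HaHa} and \eqref{q2:eq:H1H2} (and, for the second identity, the embedding-based formula \eqref{eq:qnpsi1}), and finally cancelling the invertible factor $H_1^2(v)$.

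\textbf{Proof of \eqref{q2:eq:H1E1}.} I would apply \eqref{re:BTT} with $(a,b,c,d)=(1,1,1,2)$ to get
\[
[T_{11}^1(u), T_{12}^2(v)] = K(u,v)\,T_{11}^1(u) T_{12}^2(v) - T_{11}^2(v) T_{12}^1(u)\,K(u,v).
\]
Substituting $T_{11}(u) = H_1(u)$ and $T_{12}(u) = H_1(u)E_1(u)$, I would expand the LHS by the Leibniz rule as $[H_1^1(u), H_1^2(v)]E_1^2(v) + H_1^2(v)[H_1^1(u), E_1^2(v)]$. The identity \eqref{q2:eq:HaHa} for $a=1$ cancels the $K(u,v)H_1^1(u)H_1^2(v)E_1^2(v)$ terms from both sides, leaving
\[
H_1^2(v)\,[H_1^1(u), E_1^2(v)] = H_1^2(v) H_1^1(u)\bigl(K(u,v)E_1^2(v) - E_1^1(u) K(u,v)\bigr).
\]
Cancelling $H_1^2(v)$ (whose leading term is the identity) gives \eqref{q2:eq:H1E1}.

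\textbf{Proof of \eqref{q2:eq:H2E1}.} Since $T_{22}(u) = H_2(u) + F_1(u)H_1(u)E_1(u)$, the identity is not quite as direct. The strategy is to exploit the embedding $\psi_1 : \Y(\mathfrak{q}_1)\to\Y(\mathfrak{q}_2)$ of Theorem~\ref{thm:embedding}, under which $\psi_1(T(u)) = H_2(u)$. Because $[H_2^1(u), H_1^2(v)] = 0$ by \eqref{q2:eq:H1H2}, I have
\[
H_1^2(v)\,[H_2^1(u), E_1^2(v)] = [H_2^1(u), T_{12}^2(v)] = [T_{22}^1(u), T_{12}^2(v)] - [F_1^1(u)H_1^1(u)E_1^1(u), T_{12}^2(v)].
\]
Each bracket on the right is expanded using \eqref{re:BTT} applied to $[T_{22}^1(u), T_{12}^2(v)]$, $[T_{21}^1(u), T_{12}^2(v)]$ and (hidden inside the latter via $T_{21}E_1 = F_1 H_1 E_1$) $[T_{11}^1(u), T_{12}^2(v)]$. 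The companion identity
\[
[E_1^1(u), H_2^2(v)] = H_2^2(v)(K(u,v)E_1^2(v) - E_1^1(u)K(u,v)),
\]
which is the specialization of \eqref{eq:qnpsi1} to $m=1$, $a=2$, $b=c=1$, handles the $H_2^2(v)$-containing terms that arise when commutators are reordered. After collecting, the contributions involving $F_1$ and the $T_{22}^2$-block cancel in pairs, producing
\[
H_1^2(v)\,[H_2^1(u), E_1^2(v)] = H_1^2(v) H_2^1(u)\bigl(K(u,v) E_1^1(u) - E_1^2(v) K(u,v)\bigr),
\]
and \eqref{q2:eq:H2E1} follows by cancelling $H_1^2(v)$.

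\textbf{Main obstacle.} The calculation for \eqref{q2:eq:H1E1} is essentially routine once one sees the correct block RTT specialization. The real work is in \eqref{q2:eq:H2E1}: the asymmetry between $Q^{12}$ and $Q^{21}$ in $K(u,v)$ means one cannot simply obtain \eqref{q2:eq:H2E1} from \eqref{eq:qnpsi1} by a formal swap of tensor slots. Instead, one must carry out the full expansion outlined above and verify, using Lemma~\ref{lem:PQtransform} to commute $K(u,v)$ past YQ-form matrices and using the already-established relations, that the $F_1(u)$-dependent cross terms cancel exactly. This bookkeeping is the delicate part of the proof.
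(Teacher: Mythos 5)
Your argument for \eqref{q2:eq:H1E1} is correct and complete: applying \eqref{re:BTT} to $[T_{11}^1(u),T_{12}^2(v)]$ and cancelling against \eqref{q2:eq:HaHa} does give the identity after removing the invertible factor $H_1^2(v)$. This is a mild variant of the paper's route, which instead applies the $T\widetilde T$ relation \eqref{re:BTWT} to $[T_{11}^1(u),\widetilde T_{12}^2(v)]$ and uses $\widetilde T_{12}(v)=-E_1(v)\widetilde H_2(v)$, $\widetilde T_{22}(v)=\widetilde H_2(v)$ together with $[H_1^1(u),H_2^2(v)]=0$; both are equally short.

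For \eqref{q2:eq:H2E1} there is a genuine gap. The entire content of the identity lies in the cancellation you assert ("the contributions involving $F_1$ and the $T_{22}^2$-block cancel in pairs"), and you do not carry it out. Your expansion of $[T_{21}^1(u)E_1^1(u),T_{12}^2(v)]$ forces you to evaluate $[E_1^1(u),T_{12}^2(v)]$, which (after solving $T_{11}^1(u)[E_1^1(u),T_{12}^2(v)]=[T_{12}^1(u),T_{12}^2(v)]-[T_{11}^1(u),T_{12}^2(v)]E_1^1(u)$) produces terms of the form $\widetilde H_1^1(u)K(u,v)H_1^1(u)(\cdots)$; since $\widetilde H_1^1(u)$ does not commute with $K(u,v)$, these must be pushed through $P$ and $Q$ via Lemma~\ref{lem:PQtransform}, changing tensor slots and the sign of the spectral parameter, and nothing in your sketch indicates how the resulting terms recombine. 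Moreover, you correctly produce the companion identity $[E_1^1(u),H_2^2(v)]=H_2^2(v)\left(K(u,v)E_1^2(v)-E_1^1(u)K(u,v)\right)$ from \eqref{eq:qnpsi1}, but then rule out converting it into \eqref{q2:eq:H2E1} by a tensor-slot exchange on the grounds of the $Q$-asymmetry. That is exactly the step the paper does perform: it first derives $[E_1^1(u),\widetilde H_2^2(v)]=\left(E_1^1(u)K(u,v)-K(u,v)E_1^2(v)\right)\widetilde H_2^2(v)$ from a single application of \eqref{re:BTWT} to $[T_{12}^1(u),\widetilde T_{22}^2(v)]$ (thereby never meeting the $F_1H_1E_1$ correction term, since $\widetilde T_{22}(v)=\widetilde H_2(v)$), inverts $\widetilde H_2$, and then carries out the slot exchange carefully using Lemmas~\ref{re:PQrelation} and \ref{lem:PQtransform}. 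The exchange is not a formal symmetry, but it is a finite computation with $P$ and $Q$, and it is precisely what produces the asymmetric form $K(u,v)E_1^1(u)-E_1^2(v)K(u,v)$ in \eqref{q2:eq:H2E1}. You should either complete that conjugation step starting from your companion identity, or abandon the $H_2=T_{22}-F_1H_1E_1$ decomposition in favour of the inverse-matrix block $\widetilde T_{22}(v)=\widetilde H_2(v)$.
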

\begin{proof}
We consider the block RTT relation \eqref{re:BTT}
\begin{align*}
\left[T_{11}^{1}(u), \widetilde{T}_{12}^{2}(v)\right]
=T_{11}^{1}(u)K(u,v)\widetilde{T}_{12}^{2}(v)+T_{12}^{1}(u)K(u,v)\widetilde{T}_{22}^{2}(v),
\end{align*}
where $\widetilde{T}(u)=\left(\widetilde{T}_{ab}(u)\right)$ is the inverse of $T(u)$. Now, 
\begin{align*}
T_{11}(u)=&H_1(u),& T_{12}(u)=&H_1(u)E_1(u),\\
\widetilde{T}_{22}(v)=&\widetilde{H}_2(v),&
\widetilde{T}_{12}(v)=&-E_1(v)\widetilde{H}_2(v),
\end{align*}
where $\widetilde{H}_2(v)$ is the inverse of $H_2(v)$. We obtain that
\begin{align*}
\left[H_1^{1}(u),E_1^2(v)\widetilde{H}_2^2(v)\right]=&H_1^{1}(u)\left(K(u,v)E_1^{2}(v)-E_1^{1}(u)K(u,v)\right)\widetilde{H}_2^2(v),
\end{align*}
which yields \eqref{q2:eq:H1E1} since $H_1^1(u)$ commutes with $H_2^2(v)$.

For \eqref{q2:eq:H2E1}, we consider the block RTT relation \eqref{re:BTT}:
$$\left[T_{12}^1(u), \widetilde{T}_{22}^2(v)\right]=T_{11}^1(u)K(u,v)\widetilde{T}_{12}^2(v)+T_{12}^1(u)K(u,v)\widetilde{T}_{22}^2(v),
$$
which implies that
$$\left[E_1^1(u),\widetilde{H}_2^2(v)\right]
=\left(E_1^1(u)K(u,v)-K(u,v)E_1^2(v)\right)\widetilde{H}_2^2(v).$$
Since $\widetilde{H}_2(v)$ is the inverse of $H_2(v)$, we obtain that
$$\left[H_2^2(u),E_1^1(v)\right]=-H_2^2(u)\left[E_1^1(v),\widetilde{H}_2^2(u)\right]H_2^2(u)
=H_2^2(u)\left(E_1^1(v)K(v,u)-K(v,u)E_1^2(u)\right).$$
Then we conclude \eqref{q2:eq:H2E1} by Lemmas~\ref{re:PQrelation} and \ref{lem:PQtransform}.
\end{proof}

\begin{lemma}
The following identities holds in $\mathrm{End}\left(\mathbb{C}^{1|1}\right)\otimes\mathrm{End}\left(\mathbb{C}^{1|1}\right)\otimes\Y(\mathfrak{q}_2)[[u^{-1}, v^{-1}]]$:
\begin{equation}
\begin{aligned}
\left[E_1^{1}(u), F_1^{2}(v)\right]
=&\widetilde{H}_1^{1}(u)K(u,v)H_2^1(u)-H_2^2(v)K(u,v)\widetilde{H}_1^{2}(v).\label{q2:eq:E1F1}
\end{aligned}
\end{equation}
\end{lemma}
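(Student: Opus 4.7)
The strategy mirrors the proofs of \eqref{q2:eq:H1E1} and \eqref{q2:eq:H2E1}: I would isolate the desired commutator by applying an appropriate block RTT relation and then peel off the surrounding $H$-factors by multiplying by their inverses. Since $E_1$ sits inside $T_{12}=H_1 E_1$ and $F_1$ sits inside $T_{21}=F_1 H_1$, the natural choice is the block RTT relation \eqref{re:BTT} with $(a,b,c,d)=(1,2,2,1)$:
$$[T_{12}^{1}(u), T_{21}^{2}(v)] = K(u,v)\, T_{22}^{1}(u) T_{11}^{2}(v) - T_{22}^{2}(v) T_{11}^{1}(u) K(u,v).$$
I would then substitute the Gauss decomposition $T_{11}=H_1$, $T_{12}=H_1 E_1$, $T_{21}=F_1 H_1$, and $T_{22}=H_2+F_1 H_1 E_1$. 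The LHS expands to $H_1^{1}(u) E_1^{1}(u) F_1^{2}(v) H_1^{2}(v) - F_1^{2}(v) H_1^{2}(v) H_1^{1}(u) E_1^{1}(u)$, while the RHS splits into a ``main'' contribution $K(u,v) H_2^{1}(u) H_1^{2}(v) - H_2^{2}(v) H_1^{1}(u) K(u,v)$ and a ``correction'' contribution $K(u,v) F_1^{1}(u) H_1^{1}(u) E_1^{1}(u) H_1^{2}(v) - F_1^{2}(v) H_1^{2}(v) E_1^{2}(v) H_1^{1}(u) K(u,v)$ arising from the $F_1 H_1 E_1$ part of $T_{22}$.

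The central task is to commute all $H_1^{1}(u)$ factors to the far left and all $H_1^{2}(v)$ factors to the far right of the LHS, using \eqref{q2:eq:HaHa}, \eqref{q2:eq:H1H2}, \eqref{q2:eq:H1E1}, \eqref{q2:eq:H2E1}, together with the $F$-analogs of the form $[H_a^{1}(u), F_1^{2}(v)]$. These $F$-analogs are obtained by running the same argument on suitable instances of the block TRT relation \eqref{re:BTWT} (for example with indices $(2,1,2,2)$, which will yield $[F_1^{1}(u), \widetilde{H}_2^{2}(v)]$ and hence the $H_2, F_1$ relation after inversion), or equivalently by applying the anti-involution $\omega$ of Proposition~\ref{antiinvolutionomega}, which swaps $E_1\leftrightarrow F_1$ and preserves $H_a$. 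After these rearrangements, the LHS is expected to take the form $H_1^{1}(u)\,[E_1^{1}(u), F_1^{2}(v)]\,H_1^{2}(v)$ plus exactly the correction terms already appearing on the RHS; these cancel, and the identity follows by multiplying both sides on the left by $\widetilde{H}_1^{1}(u)$ and on the right by $\widetilde{H}_1^{2}(v)$, using \eqref{q2:eq:H1H2} to pass $\widetilde{H}_1^{1}(u)$ past $H_2^{2}(v)$.

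The main obstacle is the bookkeeping in the middle step. Three simultaneous sources of complication must be tracked: (i) $H_1^{1}(u)$ and $H_1^{2}(v)$ themselves do not commute but satisfy the nontrivial relation \eqref{q2:eq:HaHa}, generating extra $K$-terms whenever one is moved past the other; (ii) the $K(u,v)$-matrix contains the permutation $P$ (and its $Q$-counterpart), so any factor crossing a $K$ has its auxiliary label altered via \eqref{re:PQtransform1}--\eqref{re:PQtransform2} and Lemma~\ref{re:PQrelation}; (iii) the correction terms on the RHS and the residues produced while rearranging the LHS must cancel exactly, not merely up to a term one hopes vanishes. Aligning these three effects precisely, so that the $F_1 H_1 E_1$-flavored debris cancels and only $\widetilde{H}_1 K H_2$ and $H_2 K \widetilde{H}_1$ remain, is the principal technical challenge.
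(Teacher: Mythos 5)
Your choice of RTT instance, $(a,b,c,d)=(1,2,2,1)$, puts the Gauss factors in the least convenient configuration: $T_{12}^1(u)=H_1^1(u)E_1^1(u)$ carries its $H_1$ on the \emph{left} while $T_{21}^2(v)=F_1^2(v)H_1^2(v)$ carries its $H_1$ on the \emph{right}, so in the term $F_1^2(v)H_1^2(v)H_1^1(u)E_1^1(u)$ you must drag $H_1^1(u)$ past $F_1^2(v)$, drag $H_1^2(v)$ past $E_1^1(u)$, \emph{and} contend with $[H_1^1(u),H_1^2(v)]\neq 0$, all simultaneously. You correctly identify this three-way bookkeeping as the principal difficulty, but you then assert rather than verify that the resulting debris cancels against the $F_1H_1E_1$ correction from $T_{22}$. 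That assertion is the entire content of the proof, so as written the argument has a genuine gap: nothing in the proposal establishes the cancellation, and each of the auxiliary commutators you invoke ($[H_1^1(u),F_1^2(v)]$, $[H_1^1(u),H_1^2(v)]$) injects further $K(u,v)$-terms whose mutual cancellation is exactly what needs to be checked.

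The paper avoids all of this by working with the \emph{opposite} ordering, $[T_{21}^1(u),T_{12}^2(v)]$ with $(a,b,c,d)=(2,1,1,2)$. There the factors align as $F_1^1(u)H_1^1(u)$ against $H_1^2(v)E_1^2(v)$, so the Leibniz expansion isolates the middle term $H_1^2(v)[F_1^1(u),E_1^2(v)]H_1^1(u)$ with both $H_1$'s already on the outside; the only auxiliary inputs needed are the RTT value of $[T_{21}^1(u),T_{11}^2(v)]$ and the already-proved relation \eqref{q2:eq:H1E1}, after which the $F_1H_1E_1$ part of $T_{22}$ cancels on the nose and one strips the invertible $H_1$'s. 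The statement for $[E_1^1(u),F_1^2(v)]$ is then recovered from $[F_1^1(v),E_1^2(u)]$ by conjugating with the permutation $P$ and swapping $u\leftrightarrow v$ (Lemmas~\ref{re:PQrelation} and \ref{lem:PQtransform}). If you want to salvage your route, you must either carry out the full rearrangement explicitly, or switch to the paper's ordering, where no rearrangement is required.
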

\begin{proof}
According to the block Gauss decomposition \eqref{q2:eq:gaussfac}, 
$$T_{11}(u)=H_1(u), T_{21}(u)=F_1(u)H_1(u), \text{ and }T_{12}(v)=H_1(v)E_1(v).$$
It yields that
\begin{align*}
\left[T_{21}^{1}(u), T_{12}^{2}(v)\right]=&\left[T_{21}^{1}(u), T_{11}^{2}(v)\right]E_1^{2}(v)+H_1^{2}(v)\left[F_{1}^{1}(u), E_{1}^{2}(v)\right]H_1^{1}(u)\\&+H_1^{2}(v)F_1^{1}(u)\left[H_1^{1}(u), E_{1}^{2}(v)\right].
\end{align*}
Then we deduce by the block RTT relation \eqref{re:BTT} and the identity \eqref{q2:eq:H1E1} that
\begin{align*}
&H_1^{2}(v)\left[F_1^{1}(u),\ E_1^{2}(v)\right]H_1^{1}(u)\\
=&K(u,v)T_{11}^{1}(u)T_{22}^{2}(v)-T_{11}^{2}(v)T_{22}^{1}(u)K(u,v)\\
&-K(u,v)H_1^{1}(u)F_1^{2}(v)H_1^{2}(v)E_1^{2}(v)+H_1^{2}(v)F_1^{1}(u)H_1^{1}(u)E_1^{1}(u)K(u,v)\\
=&K(u,v)H_1^1(u)H_2^2(v)-H_1^2(v)H_2^1(u)K(u,v).
\end{align*}
Since $H_1(u)$ is invertible, we further obtain that
\begin{align*}
\left[F_1^{1}(u),\ E_{1}^{2}(v)\right]
=&\widetilde{H}_1^2(v)K(u,v)H_2^2(v)-H_2^1(u)K(u,v)\widetilde{H}_1^1(u).
\end{align*}
Now, Lemmas~\ref{re:PQrelation} and \ref{lem:PQtransform} ensure that $\left[E_1^{1}(u), F_1^{2}(v)\right]=-P\left[F_1^1(v),\ E_1^{2}(u)\right]P$. Then the identity \eqref{q2:eq:E1F1} is derived from the above identity

\end{proof}

\begin{lemma}\label{lem:E11E12}
The following identities hold in $\mathrm{End}\left(\mathbb{C}^{1|1}\right)\otimes\mathrm{End}\left(\mathbb{C}^{1|1}\right)\otimes\Y(\mathfrak{q}_2)[[u^{-1}, v^{-1}]]$:
\begin{align}
&\left[E_1^{1}(u), E_1^{2}(v)\right]
=\frac{P}{u-v}E_1^1(v,u)E_1^2(v,u)+\frac{Q}{u+v}E_1^1(-v,u)E_1^2(v,-u),\label{q2:eq:E1E1}\\
&P\left[E_1^{1}(u),\ E_1^{2}(u)\right]-Q\left[E_1^{1}(u),\ E_1^{2}(-u)\right]=0,\label{eq:q2:E1uE1u}\\
&2u\left[E_1^{1}(u),\ E_1^{2}(u)\right]=-QE_1^1(u,-u)E_1^2(u,-u).\label{eq:q2:E1uE1u2}
\end{align}
where $E_1(v,u)=E_1(v)-E_1(u)$.
\end{lemma}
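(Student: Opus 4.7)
The plan is to derive all three identities from the block RTT relation obtained by specializing \eqref{re:BTT} to $(a,b,c,d) = (1,2,1,2)$,
\[
[T^1_{12}(u), T^2_{12}(v)] = K(u,v)\, T^1_{12}(u) T^2_{12}(v) - T^2_{12}(v) T^1_{12}(u)\, K(u,v),
\]
combined with the block Gauss factorization $T_{12}(u) = H_1(u) E_1(u)$ coming from \eqref{q2:eq:gaussfac}. The strategy mirrors the derivations of \eqref{q2:eq:HaHa}--\eqref{q2:eq:E1F1} given just above, but now both blocks are off-diagonal, so the commutator $[E_1^1(u), E_1^2(v)]$ itself will be produced on the left.

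\medskip

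For the main identity \eqref{q2:eq:E1E1}, I would substitute $T_{12} = H_1 E_1$ on both sides and expand via the Leibniz rule. Every commutator appearing beyond the desired $H_1^1(u) H_1^2(v) [E_1^1(u), E_1^2(v)]$ is already controlled by the previously established relations \eqref{q2:eq:HaHa} and \eqref{q2:eq:H1E1} (together with the slot-swapped version of the latter). On the right-hand side I would likewise move $E_1^1(u)$ past $H_1^2(v)$ using \eqref{q2:eq:H1E1}, and then collapse the resulting $K$-matrix monomials via the $P$--$Q$ identities of Lemma~\ref{re:PQrelation} and the intertwining formulas of Lemma~\ref{lem:PQtransform}. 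After cancelling the invertible prefactor $H_1^1(u) H_1^2(v)$ on the left (Lemma~\ref{le:q1}) and grouping terms, the identity \eqref{q2:eq:E1E1} should emerge; the differences $E_1(v)-E_1(u)$ and $E_1(-v)-E_1(u)$ arise naturally because they supply the hidden factors of $u-v$ and $u+v$ that resolve the apparent poles of $K(u,v)$.

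\medskip

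To deduce \eqref{eq:q2:E1uE1u2}, I would specialize \eqref{q2:eq:E1E1} at $v=u$. The first summand contains $E_1^1(v,u) E_1^2(v,u)$, each factor of which is $O(v-u)$, so the pole $(u-v)^{-1}$ is overmatched and this term vanishes; the second summand evaluates to $\tfrac{Q}{2u}(E_1^1(-u)-E_1^1(u))(E_1^2(u)-E_1^2(-u)) = -\tfrac{Q}{2u} E_1^1(u,-u)E_1^2(u,-u)$, whence \eqref{eq:q2:E1uE1u2} follows after multiplication by $2u$. For \eqref{eq:q2:E1uE1u}, a parallel specialization of \eqref{q2:eq:E1E1} at $v = -u$ reverses the roles: the second summand is annihilated by its doubly vanishing numerator, while the first produces $\tfrac{P}{2u} E_1^1(u,-u) E_1^2(u,-u)$, giving a closed form for $[E_1^1(u), E_1^2(-u)]$. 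Combining this with the formula from \eqref{eq:q2:E1uE1u2} and invoking the $P$--$Q$ compatibilities of Lemma~\ref{re:PQrelation} to eliminate the common factor $E_1^1(u,-u) E_1^2(u,-u)$ should then produce \eqref{eq:q2:E1uE1u}.

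\medskip

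The main obstacle is the bookkeeping inside the derivation of \eqref{q2:eq:E1E1}: the expansion of $[H_1^1(u) E_1^1(u), H_1^2(v) E_1^2(v)]$ produces numerous cross-terms whose $K(u,v)$-factors occupy a variety of positions, and each must be normalized using the tower of $P$--$Q$ identities of Lemma~\ref{re:PQrelation} while the super-signs coming from the graded tensor product are tracked carefully. Once \eqref{q2:eq:E1E1} is in place, the two specializations giving \eqref{eq:q2:E1uE1u2} and \eqref{eq:q2:E1uE1u} are routine, the only subtlety being to verify that the formal poles at $v = \pm u$ are genuine artifacts that cancel against the vanishing numerators $E_1(v,u)$ and $E_1(-v,u)$.
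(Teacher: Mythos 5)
Your starting point (the block RTT relation for $\left[T_{12}^1(u),T_{12}^2(v)\right]$ together with $T_{12}=H_1E_1$) is the same as the paper's, but the logical order you propose does not go through, and the gap sits exactly at the step you dismiss as bookkeeping. When you expand $\left[H_1^1(u)E_1^1(u),H_1^2(v)E_1^2(v)\right]$ and push $E_1^1$ past $H_1^2$ using \eqref{q2:eq:H1E1}, the $P$ and $Q$ factors act on the tensor slots via $PX^1(u)=X^2(u)P$ and $QX^1(u)=X^2(-u)Q$ (Lemma~\ref{lem:PQtransform}), so series originally evaluated at $v$ get re-evaluated at $\pm u$. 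The computation therefore does \emph{not} close up into \eqref{q2:eq:E1E1}: what it actually produces is the intermediate identity \eqref{q2:eq:E1E1pf1}, in which the commutator appears on the left multiplied by $R(u,v)=1-\frac{P}{u-v}-\frac{Q}{u+v}$ and the right-hand side carries the additional terms $-\frac{P}{u-v}\left[E_1^1(u),E_1^2(u)\right]-\frac{Q}{u+v}\left[E_1^1(u),E_1^2(-u)\right]-\frac{PQ}{u^2-v^2}E_1^1(u,-u)E_1^2(u,-u)$. These diagonal terms do not cancel by regrouping; showing that their sum vanishes is precisely the content of \eqref{eq:q2:E1uE1u} and \eqref{eq:q2:E1uE1u2}. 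Your plan is therefore circular: you need those two identities to establish \eqref{q2:eq:E1E1}, yet you propose to obtain them only afterwards by specializing \eqref{q2:eq:E1E1} at $v=\pm u$.

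The paper breaks this circle with an idea that is absent from your proposal: multiply \eqref{q2:eq:E1E1pf1} on the left by $R(-u,-v)$, use $R(-u,-v)R(u,v)=1-\frac{1}{(u-v)^2}-\frac{1}{(u+v)^2}$, and evaluate the resulting identity at the four zeros $v=\pm v_1,\pm v_2$ of this scalar series in $\mathbb{C}[[u^{-1}]]$. Taking the difference of the two identities for each $i$ and using \eqref{eq:Rzero1}--\eqref{eq:Rzero3} together with $v_1\neq v_2$ yields \eqref{eq:q2:E1uE1u} and \eqref{eq:q2:E1uE1u2} independently of \eqref{q2:eq:E1E1}; only then do the extra terms in \eqref{q2:eq:E1E1pf1} vanish, and the invertibility of $R(u,v)$ gives \eqref{q2:eq:E1E1}. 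Your specializations at $v=\pm u$ are fine as consistency checks once \eqref{q2:eq:E1E1} is known, but they cannot serve as the proof of the two auxiliary identities.
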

\begin{proof}
We consider the block RTT relation \eqref{eq:RTT}, and obtain that
\begin{align*}
\left[T_{12}^{1}(u),\ T_{12}^{2}(v)\right]
=&K(u,v)T_{12}^{1}(u)T_{12}^{2}(v)-T_{12}^{2}(v)T_{12}^{1}(u)K(u,v).
\end{align*}
Note that the Gauss decomposition \eqref{q2:eq:gaussfac} yields that $T_{12}(u)=H_1(u)E_1(u)$, we deduce that
\begin{align*}
&H_{1}^{2}(v)H_{1}^{1}(u)\left[E_1^{1}(u), E_1^{2}(v)\right]\\
=&\left[T_{12}^{1}(u), T_{12}^{2}(v)\right]
-\left[T_{12}^{1}(u),T_{11}^{2}(v)\right]E_1^{2}(v)
-T_{11}^{2}(v)\left[T_{11}^{1}(u), E_1^{2}(v)\right]E_1^{1}(u)\\
=&\frac{P}{u-v}H_1^{1}(v)E_1^{1}(v)H_1^{2}(u)\left(E_1^{2}(v)-E_1^{2}(u)\right)+\frac{Q}{u+v}H_1^{1}(-v)E_1^{1}(-v)H_1^{2}(-u)\left(E_1^{2}(v)-E_1^{2}(-u)\right)\\
&-\frac{P}{u-v}H_1^{1}(v)H_1^{2}(u)\left(E_1^{2}(v)-E_1^{2}(u)\right)E_1^{1}(u)-\frac{Q}{u+v}H_1^{1}(-v)H_1^{2}(-u)\left(E_1^{2}(v)-E_1^{2}(-u)\right)E_1^{1}(u).
\end{align*}
Now, the relation \eqref{q2:eq:H1E1} yields that
$$E_1^1(v)H_1^2(u)=H_1^2(u)E_1^1(v)-\frac{P}{u-v}H_1^1(u)\left(E_1^1(v)-E_1^1(u)\right)
+\frac{Q}{u+v}H_1^1(-u)\left(E_1^1(v)-E_1^1(-u)\right).$$
Since $H_1(u)$ and $H_1(v)$ are invertible, we obtain that
\begin{equation}
\begin{aligned}
R(u,v)\left[E_1^{1}(u), E_1^{2}(v)\right]
=&R(u,v)\left(\frac{P}{u-v}E_1^1(v,u)E_1^2(v,u)+\frac{Q}{u+v}E_1^1(-v,u)E_1^2(v,-u)\right)\\
&-\frac{P}{u-v}\left[E_1^{1}(u),E_1^{2}(u)\right]-\frac{Q}{u+v}\left[E_1^{1}(u),E_1^{2}(-u)\right]\\
&-\frac{PQ}{u^2-v^2}E_1^1(u,-u)E_1^2(u,-u),
\end{aligned}
\label{q2:eq:E1E1pf1}
\end{equation}
where $R(u,v)=1-\frac{P}{u-v}-\frac{Q}{u+v}$ is the R-matrix for $\Y(\mathfrak{q}_1)$.

Since $R(-u,-v)R(u,v)=1-\frac{1}{(u-v)^2}-\frac{1}{(u+v)^2}$, we further deduce by left multiplying $R(-u,-v)$ on both sides of the identity \eqref{q2:eq:E1E1pf1} that 
\begin{align*}
&\left(1-\frac{1}{(u-v)^2}-\frac{1}{(u+v)^2}\right)\left[E_1^{1}(u), E_1^{2}(v)\right]\\
=&\left(1-\frac{1}{(u-v)^2}-\frac{1}{(u+v)^2}\right)\left(\frac{P}{u-v}E_1^1(v,u)E_1^2(v,u)+\frac{Q}{u+v}E_1^1(-v,u)E_1^2(v,-u)\right)\\
&-R(-u,-v)\left(\frac{P}{u-v}\left[E_1^{1}(u),E_1^{2}(u)\right]+\frac{Q}{u+v}\left[E_1^{1}(u), E_1^{2}(-u)\right]\right)\\
&-R(-u,-v)\frac{PQ}{u^2-v^2}E_1^1(u,-u)E_1^2(u,-u). 
\end{align*}

The formal series $1-\frac{1}{(u-v)^2}-\frac{1}{(u+v)^2}$ has four zeros $v=\pm v_i, i=1,2,$ in $\mathbb{C}[[u^{-1}]]$, where
$$v_1=\left(1+u^{-2}+(1+4u^{-2})^{\frac{1}{2}}\right)^{-\frac{1}{2}},\quad v_2=\left(1+u^{-2}-(1+4u^{-2})^{\frac{1}{2}}\right)^{-\frac{1}{2}}.$$
Let $v=\pm v_i$ for $i=1,2$ in the above equality, we obtain four identities:
\begin{equation}
\begin{aligned}
&R(-u,\mp v_i)\frac{P}{u\mp v_i}\left[E_1^{1}(u),E_1^{2}(u)\right]
+R(-u,\mp v_i)\frac{Q}{u\pm v_i}\left[E_1^{1}(u), E_1^{2}(-u)\right]\\
&+R(-u,\mp v_i)\frac{PQ}{u^2-v_i^2}E_1^1(u,-u)E_1^2(u,-u)=0,
\end{aligned}
\label{q2:eq:E1E1pf3}
\end{equation}
for $i=1,2$. 

Moreover, Lemma~\ref{re:PQrelation} implies that
	\begin{align}
		&R(-u,-v_i)\frac{P}{u-v_i}-R(-u,v_i)\frac{P}{u+v_i}=\left(\frac{2v_i}{u^2-v^2_i}\right)\left(P+\frac{2u}{u^2-v^2_i}\right),\label{eq:Rzero1}\\
		&R(-u,-v_i)\frac{Q}{u+v_i}-R(-u,v_i)\frac{P}{u-v_i}=\left(\frac{-2v_i}{u^2-v^2_i}\right)\left(Q+\frac{2u}{u^2-v^2_i}\right),\label{eq:Rzero2}\\
		&R(-u,-v_i)\frac{PQ}{u^2-v_i^2}-R(-u,v_i)\frac{PQ}{u^2-v_1^2}=\left(\frac{2v_i}{u^2-v^2_i}\right)\frac{P+Q}{u^2-v_i^2}.\label{eq:Rzero3}
	\end{align}
Hence, for each $i=1,2$, the difference of the two identities in \eqref{q2:eq:E1E1pf3} corresponding to two signs yields that
\begin{align*}
&\left(P+\frac{2u}{u^2-v_i^2}\right)\left[E^{1}(u),E^{2}(u)\right]-\left(Q+\frac{2u}{u^2-v_i^2}\right)\left[E^{1}(u),E^{2}(-u)\right]\\
&+\frac{P+Q}{u^2-v_i^2}E_1^1(u,-u)E_1^2(u,-u)=0.
\end{align*}
Since $v_1\neq v_2$, we obtain \eqref{eq:q2:E1uE1u} and
$$2u\left[E^{1}(u),E^{2}(u)\right]-2u\left[E^{1}(u),E^{2}(-u)\right]+(P+Q)E_1^1(u,-u)E_1^2(u,-u)=0.$$
Then \eqref{eq:q2:E1uE1u2} is derived by left multiplying $Q$ on both sides of the above identity.

By \eqref{eq:q2:E1uE1u} and \eqref{eq:q2:E1uE1u2}, we also have
$$\frac{P}{u-v}\left[E_1^{1}(u),E_1^{2}(u)\right]+\frac{Q}{u+v}\left[E_1^{1}(u),E_1^{2}(-u)\right]
+\frac{PQ}{u^2-v^2}E_1^1(u,-u)E_1^2(u,-u)=0.$$
Therefore, the identity \eqref{q2:eq:E1E1pf1} is reduced to \eqref{q2:eq:E1E1} since $R(u,v)$ is invertible.
\end{proof}

\begin{remark}
  By comparing matrix entries in \eqref{q2:eq:E1E1}, the following identities hold in $\Y(\mathfrak{q}_2)[[u^{-1},v^{-1}]]$:
\begin{align*}
&\left[e_1(u),\ e_1(v)\right]=\frac{\left(e_1(v)-e_1(u)\right)^2}{u-v}-\frac{\left(\bar{e}_1(-v)-\bar{e}_1(u)\right)\left(\bar{e}_1(v)-\bar{e}_1(-u)\right)}{u+v},\\
&\left[e_1(u),\ \bar{e}_1(v)\right]=\frac{\left(\bar{e}_1(v)-\bar{e}_1(u)\right)\left(e_1(v)-e_1(u)\right)}{u-v}+\frac{\left(e_1(-v)-e_1(u)\right)\left(\bar{e}_1(v)-\bar{e}_1(-u)\right)}{u+v},\\
&\left[e_1(u),\ \bar{e}_1(v)\right]=\frac{\left(e_1(u)-e_1(v)\right)\left(\bar{e}_1(u)-\bar{e}_1(v)\right)}{u-v}+\frac{\left(\bar{e}_1(-u)-\bar{e}_1(v)\right)\left(e_1(u)-e_1(-v)\right)}{u+v},\\
&\left[\bar{e}_1(u),\ \bar{e}_1(v)\right]=-\frac{\left(\bar{e}_1(v)-\bar{e}_1(u)\right)^2}{u-v}-\frac{\left(e_1(-v)-e_1(u)\right)\left(e_1(v)-e_1(-u)\right)}{u+v}.
\end{align*}
Meanwhile, the identity \eqref{eq:q2:E1uE1u} is equivalent to the following identities in $\Y(\mathfrak{q}_2)[[u^{-1}]]$
\begin{align*}
&\left[e_1(u),\ e_1(u)\right]=\left[\bar{e}_1(-u),\ \bar{e}_1(u)\right],&&\left[e_1(u),\ e_1(-u)\right]=\left[\bar{e}_1(u),\ \bar{e}_1(u)\right],\\
&\left[e_1(u),\ \bar{e}_1(u)\right]=\left[\bar{e}_1(-u),\ e_1(u)\right],&&\left[e_1(u),\ \bar{e}_1(u)\right]=\left[e_1(-u),\ \bar{e}_1(u)\right].
\end{align*}  
\end{remark}

\section{Serre Relations}
\label{se:serrerelation}

This section is dedicated to Serre relations for the super-Yangian $\mathrm{Y}(\mathfrak{q}_n)$. By the embedding theorem \ref{thm:embedding}, it suffices to restrict to the case where $n=3$. 

\begin{lemma}
\label{lem:E1E2}
The following identity holds in $\mathrm{End}\left(\mathbb{C}^{1|1}\right)\otimes\mathrm{End}\left(\mathbb{C}^{1|1}\right)\otimes\mathrm{Y}(\mathfrak{q}_3)[[u^{-1},v^{-1}]]$:
\begin{equation}
\left[E^{1}_{1}(u),\ E^{2}_2(v)\right]=
K(u,v)E_{13}^2(v)-E_{13}^1(u)K(u,v)+\left(E_1^1(u)K(u,v)-K(u,v)E_1^2(v)\right)E_2^2(v),\label{re:q3:E1E2}
\end{equation}
which is equivalent to the following identities in $\mathrm{Y}(\mathfrak{q}_3)[[u^{-1},v^{-1}]]$:
\begin{align}
(u-v)\left[e_{1}(u),e_2(v)\right]
=&e_{13}(v)-e_{13}(u)+\left(e_1(u)-e_1(v)\right)e_2(v)
-\left(\bar{e}_1(-u){\color{red}-}\bar{e}_1(-v)\right)\bar{e}_2(v),
\label{eq:q3:e1e2}\\
(u+v)\left[e_{1}(u),\bar{e}_2(v)\right]
=&\bar{e}_{13}(v)-\bar{e}_{13}(-u)+\left(e_1(u)-e_1(-v)\right)\bar{e}_2(v)
+\left(\bar{e}_1(-u)-\bar{e}_1(v)\right)e_2(v),
\label{eq:q3:e1be2}\\
(u-v)\left[\bar{e}_1(u),e_2(v)\right]
=&\bar{e}_{13}(v)-\bar{e}_{13}(u)+\left(e_1(-u)-e_1(-v)\right)\bar{e}_2(v)
+\left(\bar{e}_1(u)-\bar{e}_1(v)\right)e_2(v),
\label{eq:q3:be1e2}\\
(u+v)\left[\bar{e}_1(u),\bar{e}_2(v)\right]
=&e_{13}(-u)-e_{13}(v)-\left(e_1(-u)-e_1(v)\right)e_2(v)
+\left(\bar{e}_1(u)-\bar{e}_1(-v)\right)\bar{e}_2(v).
\label{eq:q3:be1be2}
\end{align}
\end{lemma}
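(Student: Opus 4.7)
The plan is to derive \eqref{re:q3:E1E2} directly from the embedding commutation identity \eqref{eq:qnpsi1} applied twice, to the embedding $\psi_1:\Y(\mathfrak{q}_2)\hookrightarrow\Y(\mathfrak{q}_3)$. First, I would specialize \eqref{eq:qnpsi1} to the indices $m=1$, $a=2$, $b=1$, $c=2$. Since $\psi_1(T_{11}(v))=H_2(v)$, $\psi_1(T_{12}(v))=H_2(v)E_2(v)$ and $E_{m,c+m}=E_{13}$, this yields
\begin{equation*}
\bigl[E_1^1(u),\,H_2^2(v)E_2^2(v)\bigr]=H_2^2(v)\bigl(K(u,v)E_{13}^2(v)-E_{13}^1(u)K(u,v)\bigr).
\end{equation*}

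Second, I would expand the left-hand side by the graded Leibniz rule into $[E_1^1(u),H_2^2(v)]E_2^2(v)+H_2^2(v)[E_1^1(u),E_2^2(v)]$. The auxiliary commutator $[E_1^1(u),H_2^2(v)]$ is obtained by applying \eqref{eq:qnpsi1} a second time with $m=1$, $a=2$, $b=1$, $c=1$ (so that $\psi_1(T_{11})=H_2$ and $E_{m,c+m}=E_1$), giving
\begin{equation*}
[E_1^1(u),H_2^2(v)]=H_2^2(v)\bigl(K(u,v)E_1^2(v)-E_1^1(u)K(u,v)\bigr).
\end{equation*}
Substituting into the previous display, cancelling the common left factor $H_2^2(v)$ (which is invertible by Lemma~\ref{le:q1}), and rearranging yields precisely \eqref{re:q3:E1E2}.

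Third, to extract the four scalar identities \eqref{eq:q3:e1e2}--\eqref{eq:q3:be1be2}, I would substitute $K(u,v)=\frac{P}{u-v}+\frac{Q}{u+v}$ and decompose $E_1(u)$, $E_2(u)$, $E_{13}(u)$ in the $\varepsilon$-basis of $\mathrm{End}(\mathbb{C}^{1|1})$. Reading off the coefficients of the four matrix units $\varepsilon_{11}\otimes\varepsilon_{11}$, $\varepsilon_{11}\otimes\varepsilon_{-1,1}$, $\varepsilon_{-1,1}\otimes\varepsilon_{11}$ and $\varepsilon_{-1,1}\otimes\varepsilon_{-1,1}$ then gives the four claimed component identities, one per matrix-unit sector.

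The main obstacle is careful bookkeeping of the $Q$-term: while the $P$ part of $K(u,v)$ simply permutes tensor factors without altering spectral parameters, the $Q$-piece flips the sign of the spectral parameter when moved past a matrix of YQ form (as captured by \eqref{re:PQtransform1} of Lemma~\ref{lem:PQtransform}). It is precisely this sign flip that produces the arguments $e_1(-u)$, $e_{13}(-u)$, $\bar e_{13}(-u)$ appearing on the right-hand sides of \eqref{eq:q3:e1be2}--\eqref{eq:q3:be1be2}, and the signs must be tracked consistently throughout the decomposition.
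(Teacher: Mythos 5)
Your argument is correct, and for the matrix identity \eqref{re:q3:E1E2} it takes a genuinely different route from the paper. The paper works with the $T$--$\widetilde T$ exchange relation \eqref{re:BTWT} applied to $\bigl[T_{12}^1(u),\widetilde T_{23}^2(v)\bigr]$, using the explicit quasi-triangular entries of the inverse matrix, $\widetilde T_{33}(v)=\widetilde H_3(v)$, $\widetilde T_{23}(v)=-E_2(v)\widetilde H_3(v)$, $\widetilde T_{13}(v)=\bigl(E_1(v)E_2(v)-E_{13}(v)\bigr)\widetilde H_3(v)$, together with $\bigl[T_{12}^1(u),\widetilde T_{33}^2(v)\bigr]=0$ and $\bigl[H_1^1(u),E_2^2(v)\bigr]=0$, and then cancels $H_1^1(u)$ on the left and $\widetilde H_3^2(v)$ on the right. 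You instead reuse the embedding commutation identity \eqref{eq:qnpsi1} twice (with $m=1$, $a=2$ and $c=2$ resp.\ $c=1$), expand $\bigl[E_1^1(u),H_2^2(v)E_2^2(v)\bigr]$ by the Leibniz rule --- legitimate here since all the block matrices are even elements of the tensor superalgebra --- and cancel the invertible factor $H_2^2(v)$. Your index bookkeeping checks out: $\psi_1(T_{11}(v))=H_2(v)$, $\psi_1(T_{12}(v))=H_2(v)E_2(v)$, $E_{m,c+m}=E_{13}$ resp.\ $E_1$, and the rearrangement reproduces \eqref{re:q3:E1E2} exactly. Since \eqref{eq:qnpsi1} is established in Section~\ref{se:Embedding} independently of this lemma, there is no circularity; your version buys a shorter computation at the cost of invoking the (already proven, but itself RTT-based) embedding lemma, whereas the paper's version is self-contained modulo the Gauss decomposition and \eqref{re:BTWT}. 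The final step --- substituting $K(u,v)=\frac{P}{u-v}+\frac{Q}{u+v}$ and reading off the four matrix-unit sectors, with the $Q$-piece flipping spectral parameters as in Lemma~\ref{lem:PQtransform} --- is the same as in the paper and is routine.
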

\begin{proof}
According to the block Gauss decomposition \eqref{eq:Gaussfac},
\begin{align*}
T_{11}(u)=&H_1(u),& T_{12}(u)=&H_1(u)E_1(u),& T_{13}(u)=&H_1(u)E_{13}(u),\\
\widetilde{T}_{33}(v)=&\widetilde{H}_3(v),&
\widetilde{T}_{23}(v)=&-E_2(v)\widetilde{H}_3(v),&
\widetilde{T}_{13}(v)=&\left(E_1(v)E_2(v)-E_{13}(v)\right)\widetilde{H}_3(v).
\end{align*}
Note that $\left[T_{12}^1(u), \widetilde{H}_3^2(u)\right]=\left[T_{12}^1(u), \widetilde{T}_{33}^2(v)\right]=0$ by \eqref{re:BTWT}, and $\left[H_1^1(u), E_2^2(v)\right]=0$, we have
$$\left[T_{12}^1(u),\widetilde{T}_{23}^2(v)\right]=-H_1^1(u)\left[E_1^1(u), E_2^2(v)\right]\widetilde{H}_3^2(v).$$

On the other hand, the relation \eqref{re:BTWT} also yields that
\begin{align*}
\left[T_{12}^1(u),\widetilde{T}_{23}^2(v)\right]
=&T_{11}^1(u)K(u,v)\widetilde{T}_{13}^2(v)
+T_{12}^1(u)K(u,v)\widetilde{T}_{23}^2(v)
+T_{13}^1(u)K(u,v)\widetilde{T}_{33}^2(v)\\
=&H_1^1(u)K(u,v)\left(E_1^2(v)E_2^2(v)-E_{13}^2(v)\right)\widetilde{H}_3^2(v)\\
&-H_1^1(u)E_1^1(u)K(u,v)E_2^2(v)\widetilde{H}_3^2(v)
+H_1^1(u)E_{13}^1(u)K(u,v)\widetilde{H}_3^2(v).
\end{align*}
It implies \eqref{re:q3:E1E2} since $H_1(u)$ and $\widetilde{H}_3(v)$ are invertible.

Comparing the matrix entries on both sides of \eqref{re:q3:E1E2}, we obtain \eqref{eq:q3:e1e2}, \eqref{eq:q3:e1be2}, \eqref{eq:q3:be1e2}, and \eqref{eq:q3:be1be2}.
\end{proof}

\begin{corollary}
The following identities hold in $\Y(\mathfrak{q}_3)[[u^{-1},v^{-1}]]$:
\begin{align}
&\left[e_1(u),e_{2}(v)\right]=\left[\bar{e}_1(-u),\bar{e}_{2}(v)\right],
\label{eq:q3:ee}\\
&\left[e_1(u),\ \bar{e}_{2}(v)\right]=-\left[\bar{e}_1(-u),e_{2}(v)\right],
\label{eq:q3:ebe}\\
u&\left[\mathring{e}_1(u), e_2(v)\right]
-v\left[e_1(u), \mathring{e}_2(v)\right]=e_1(u)e_2(v)-\bar{e}_1(-u)\bar{e}_2(v),
\label{eq:q3:e1circle}\\
u&\left[\mathring{\bar{e}}_1(u), e_2(v)\right]
-v\left[\bar{e}_1(u), \mathring{e}_2(v)\right]=\bar{e}_1(u)e_2(v)+e_1(-u)\bar{e}_2(v),
\label{eq:q3:be1circle}
\end{align}
where $\mathring{e}_i(u)=\sum\limits_{r\geqslant2}e_i^{(r)}u^{-r}$ and $\mathring{\bar{e}}_i(u)=\sum\limits_{r\geqslant2}\bar{e}_i^{(r)}u^{-r}$ for $i=1,2$.
\end{corollary}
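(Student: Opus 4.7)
The plan is to reduce each of the four claimed identities to the relations \eqref{eq:q3:e1e2}--\eqref{eq:q3:be1be2} established in Lemma~\ref{lem:E1E2}. A key technical observation I will invoke is that multiplication by $(u-v)$ (resp.\ $(u+v)$) is injective on $\mathrm{Y}(\mathfrak{q}_3)[[u^{-1},v^{-1}]]$, which follows by a direct coefficient argument on formal series with only non-positive powers of $u$ and $v$. Consequently, any equation of the form $(u\pm v)X = (u\pm v)Y$ entails $X=Y$.

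First, I would establish \eqref{eq:q3:ee} by substituting $u\mapsto -u$ in \eqref{eq:q3:be1be2} to obtain an expression for $(v-u)[\bar{e}_1(-u),\bar{e}_2(v)]$. A term-by-term comparison with \eqref{eq:q3:e1e2} shows that the two right-hand sides agree up to an overall sign, so cancelling $(u-v)$ yields the identity. The proof of \eqref{eq:q3:ebe} is structurally identical: substitute $u\mapsto -u$ in \eqref{eq:q3:be1e2}, match the resulting right-hand side with that of \eqref{eq:q3:e1be2}, and cancel the common factor $(u+v)$.

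For \eqref{eq:q3:e1circle}, I would begin from the decompositions $u\,\mathring{e}_1(u) = ue_1(u) - e_1^{(1)}$ and $v\,\mathring{e}_2(v) = ve_2(v) - e_2^{(1)}$, which rewrite the left-hand side as
$$u\left[\mathring{e}_1(u),e_2(v)\right] - v\left[e_1(u),\mathring{e}_2(v)\right] = (u-v)[e_1(u),e_2(v)] - [e_1^{(1)},e_2(v)] + [e_1(u),e_2^{(1)}].$$
The first term on the right is supplied by \eqref{eq:q3:e1e2}. For the remaining commutators, I would extract $[e_1^{(1)},e_2(v)]$ by reading off the coefficient of $u^{0}$ on both sides of \eqref{eq:q3:e1e2}, which uses that every series $e_{13}(u)$, $e_1(u)$, $\bar{e}_1(u)$ has no constant term; this yields
$$[e_1^{(1)},e_2(v)] = e_{13}(v) - e_1(v)e_2(v) + \bar{e}_1(-v)\bar{e}_2(v),$$
and similarly $[e_1(u),e_2^{(1)}] = e_{13}(u)$ from the coefficient of $v^{0}$. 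Substituting these expressions back, the $e_{13}$ contributions cancel in pairs while the remaining terms telescope to $e_1(u)e_2(v) - \bar{e}_1(-u)\bar{e}_2(v)$. The proof of \eqref{eq:q3:be1circle} follows the identical pattern, starting from \eqref{eq:q3:be1e2} and using $u\,\mathring{\bar{e}}_1(u) = u\bar{e}_1(u) - \bar{e}_1^{(1)}$.

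The main obstacle will be the sign and term tracking in the last two identities: verifying that the asymptotic extractions of $[e_1^{(1)},e_2(v)]$ and $[\bar{e}_1^{(1)},e_2(v)]$ are performed correctly, and checking that the many contributions in the final substitution collapse cleanly to the compact right-hand sides $e_1(u)e_2(v) - \bar{e}_1(-u)\bar{e}_2(v)$ and $\bar{e}_1(u)e_2(v) + e_1(-u)\bar{e}_2(v)$, respectively. This bookkeeping is routine but must be executed with care, since a single misplaced sign in the substitution $u\mapsto -u$ would already derail the cancellation.
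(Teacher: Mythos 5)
Your proposal is correct and follows essentially the same route as the paper: identities \eqref{eq:q3:ee} and \eqref{eq:q3:ebe} are read off by substituting $u\mapsto -u$ in \eqref{eq:q3:be1be2} and \eqref{eq:q3:be1e2} and cancelling the factor $(u\mp v)$ (whose injectivity on $\mathcal{A}[[u^{-1},v^{-1}]]$ you rightly, and the paper tacitly, use), while \eqref{eq:q3:e1circle} and \eqref{eq:q3:be1circle} follow from extracting the $u^0$ and $v^0$ coefficients of \eqref{eq:q3:e1e2} and \eqref{eq:q3:be1e2} exactly as in the paper's proof. Your extracted value $[e_1^{(1)},e_2(v)]=e_{13}(v)-e_1(v)e_2(v)+\bar{e}_1(-v)\bar{e}_2(v)$ is in fact the correct one (the paper's displayed intermediate formula carries a sign typo), and the telescoping you describe does collapse to the stated right-hand sides.
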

\begin{proof}
Identities \eqref{eq:q3:ee} and \eqref{eq:q3:ebe} can easily be observed in \eqref{eq:q3:e1e2}-\eqref{eq:q3:be1be2}. 

In order to show \eqref{eq:q3:e1circle}, we read the coefficients of $u^0$ in \eqref{eq:q3:e1e2}, and obtain that
$$\left[e_1^{(1)},e_2(v)\right]
=e_{13}(v)-e_1(v)e_2(v)-\bar{e}_1(v)\bar{e}_2(v).$$
While the coefficients of $v^0$ in \eqref{eq:q3:e1e2} yields that
$$\left[e_1(u),e_2^{(1)}\right]=e_{13}(u).$$
Hence, the identity \eqref{eq:q3:e1e2} implies that
$$(u-v)\left[e_1(u),e_2(v)\right]
=\left[e_1^{(1)},e_2(v)\right]-\left[e_1(u),e_2^{(1)}\right]+e_1(u)e_2(v)-\bar{e}_1(-u)\bar{e}_2(v),$$
which yields \eqref{eq:q3:e1circle}. The identity \eqref{eq:q3:be1circle} can be verified by a similar argument.
\end{proof}

\begin{lemma}
The following identities hold in $\mathrm{End}\left(\mathbb{C}^{1|1}\right)\otimes\mathrm{End}\left(\mathbb{C}^{1|1}\right)\otimes\mathrm{Y}(\mathfrak{q}_3)[[u^{-1}, v^{-1}]]$:
\begin{align}
\left[E^{1}_1(u),E^{2}_{13}(v)-E^{2}_{1}(v)E^{2}_2(v)\right]=&-\left[E^{1}_1(u),E^{2}_2(v)\right]E^{1}_1(u),\label{re:q3:E1E13}\\
\left[E^{1}_{13}(u),E^{2}_2(v)\right]=&E^{2}_2(v)\left[E^{1}_1(u),E^{2}_2(v)\right].\label{re:q3:E13E2}
\end{align}
\end{lemma}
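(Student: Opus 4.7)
The plan for both identities is to exploit the Gauss decomposition of the inverse matrix $\widetilde{T}(u) = \widetilde{E}(u)\widetilde{H}(u)\widetilde{F}(u)$ together with the block BTT relations. Concretely, computing the relevant entries of $\widetilde{T}$ in $\Y(\mathfrak{q}_3)$ yields
\[
\widetilde{T}_{13}^2(v) H_3^2(v) = E_1^2(v) E_2^2(v) - E_{13}^2(v), \qquad \widetilde{T}_{23}^2(v) H_3^2(v) = -E_2^2(v),
\]
while the forward Gauss decomposition gives $T_{13}^1(u) = H_1^1(u)E_{13}^1(u)$. The embedding theorem (Lemma~\ref{lem:com}) supplies crucial commutativity: $E_1(u), H_1(u)\in\varphi_2(\Y(\mathfrak{q}_2))$ commute with $H_3(v)\in\psi_2(\Y(\mathfrak{q}_1))$. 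All that remains is to feed these observations into the RTT-type relation (\ref{re:BTWT}) and to apply (\ref{re:q3:E1E2}) at the end.

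For (\ref{re:q3:E1E13}), using the first observation, the LHS rewrites as $-[E_1^1(u),\widetilde{T}_{13}^2(v)]\,H_3^2(v)$. Next, the inverse BTT (\ref{re:BTWT}) with $(a,b,c,d)=(1,2,1,3)$ produces $[T_{12}^1(u),\widetilde{T}_{13}^2(v)]=0$ (both Kronecker deltas vanish), which upon expanding $T_{12}=H_1E_1$ yields
\[
[E_1^1(u),\widetilde{T}_{13}^2(v)] = -\widetilde{H}_1^1(u)\,[H_1^1(u),\widetilde{T}_{13}^2(v)]\,E_1^1(u).
\]
The remaining bracket $[H_1^1(u),\widetilde{T}_{13}^2(v)]=[T_{11}^1(u),\widetilde{T}_{13}^2(v)]$ is given by (\ref{re:BTWT}) with $(a,b,c,d)=(1,1,1,3)$ as $\sum_p T_{1p}^1(u)K(u,v)\widetilde{T}_{p3}^2(v)$. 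Substituting the Gauss decompositions of each $T_{1p}$ and $\widetilde{T}_{p3}$, factoring $H_1^1(u)$ on the left and $\widetilde{H}_3^2(v)$ on the right (using $[H_1,H_3]=0$), the triple sum collapses exactly to the RHS of (\ref{re:q3:E1E2}), from which (\ref{re:q3:E1E13}) follows after tracking signs.

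For (\ref{re:q3:E13E2}), the analogous move is to write $E_{13}^1(u) = \widetilde{H}_1^1(u)T_{13}^1(u)$ and $E_2^2(v) = -\widetilde{T}_{23}^2(v)H_3^2(v)$. Since $[\widetilde{H}_1^1(u),E_2^2(v)]=0$ by Lemma~\ref{lem:com}, the bracket $[E_{13}^1(u),E_2^2(v)]$ reduces to an expression involving $[T_{13}^1(u),\widetilde{T}_{23}^2(v)]$ and $[T_{13}^1(u),H_3^2(v)]$. The first of these is computed by (\ref{re:BTWT}) with $(a,b,c,d)=(1,3,2,3)$, yielding $-\sum_p \widetilde{T}_{2p}^2(v)K(u,v)T_{p3}^1(u)$; the second is handled by expressing $H_3 = T_{33}-F_{31}H_1E_{13}-F_2H_2E_2$ and appealing to the straightforward BTT relations for the appearing $T$-entries. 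After substituting the Gauss decompositions throughout and repeatedly using $[H_1,H_3]=[H_1,H_2]=0$, the resulting expression simplifies, via (\ref{re:q3:E1E2}), to precisely $E_2^2(v)[E_1^1(u),E_2^2(v)]$.

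The main obstacle, as always in such Gauss-decomposition computations, is bookkeeping: one must carefully commute the matrix factor $K(u,v)$ past Gauss elements using Lemma~\ref{lem:PQtransform} (remembering that $Q$ flips the sign of the spectral parameter in YQ-form matrices), and must track the super-signs produced by the tensor-product multiplication rule. The decisive step is recognizing that the complicated triple sum $\sum_p T_{1p}^1(u) K(u,v) \widetilde{T}_{p3}^2(v)$ (and its analogue with row and column labels swapped) collapses precisely onto the shape of (\ref{re:q3:E1E2}); once this is seen, both identities fall out in parallel.
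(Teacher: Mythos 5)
Your treatment of the first identity is correct and is essentially the paper's own argument in rearranged form: both start from $\bigl[T_{12}^1(u),\widetilde{T}_{13}^2(v)\bigr]=0$, reduce everything to $\bigl[T_{11}^1(u),\widetilde{T}_{13}^2(v)\bigr]=\sum_p T_{1p}^1(u)K(u,v)\widetilde{T}_{p3}^2(v)$, and collapse that sum via \eqref{re:q3:E1E2}.

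The second identity is where there is a genuine gap. You apply \eqref{re:BTWT} with $(a,b,c,d)=(1,3,2,3)$ and read off $\bigl[T_{13}^1(u),\widetilde{T}_{23}^2(v)\bigr]=-\sum_p\widetilde{T}_{2p}^2(v)K(u,v)T_{p3}^1(u)$; but that term carries the factor $\delta_{ad}=\delta_{13}=0$, so the correct value is $\bigl[T_{13}^1(u),\widetilde{T}_{23}^2(v)\bigr]=0$. This vanishing is the engine of the proof, not a lost term: writing $\widetilde{T}_{23}(v)=-E_2(v)\widetilde{H}_3(v)$, it gives $\bigl[T_{13}^1(u),E_2^2(v)\bigr]\widetilde{H}_3^2(v)=-E_2^2(v)\bigl[T_{13}^1(u),\widetilde{T}_{33}^2(v)\bigr]$. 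The second defect is your plan for the remaining bracket: you aim at $\bigl[T_{13}^1(u),H_3^2(v)\bigr]$ via the expansion $H_3=T_{33}-F_{31}H_1E_{13}-F_2H_2E_2$, which reintroduces brackets such as $[T_{13},E_2]$ and $[T_{13},F_{31}]$ that are not yet available --- indeed $[E_{13},E_2]$ is exactly what the lemma asserts --- so the computation as described is circular. The object actually needed is $\bigl[T_{13}^1(u),\widetilde{T}_{33}^2(v)\bigr]$ (note $\widetilde{T}_{33}=\widetilde{H}_3$, the inverse of $H_3$, for which \eqref{re:BTWT} does apply directly): with $(a,b,c,d)=(1,3,3,3)$ one gets $\sum_p T_{1p}^1(u)K(u,v)\widetilde{T}_{p3}^2(v)$, the very same sum you already collapsed to $-H_1^1(u)\bigl[E_1^1(u),E_2^2(v)\bigr]\widetilde{H}_3^2(v)$ in part one. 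Substituting this, using $[H_1^1(u),E_2^2(v)]=0$, and cancelling the invertible factors $H_1^1(u)$ and $\widetilde{H}_3^2(v)$ yields \eqref{re:q3:E13E2} at once.
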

\begin{proof}
By the block Gauss decomposition \eqref{eq:Gaussfac}, 
$$T_{11}(u)=H_1(u), T_{12}(u)=T_{11}(u)E_1(u),\text{ and }\widetilde{T}_{13}(v)=-\left(E_{13}(v)-E_1(v)E_2(v)\right)\widetilde{H}_3(v).$$ 
Then it follows from \eqref{re:BTWT} that
$$0=\left[T_{12}^{1}(u),\ \widetilde{T}^{2}_{13}(v)\right]
=\left[T_{11}^1(u),\widetilde{T}_{13}^2(v)\right]E_1^1(u)+H_1^1(u)\left[E_1^1(u),\widetilde{T}_{13}^2(v)\right].$$

On one hand, we deduce from $\left[E_1^1(u), H_3^2(v)\right]=0$ that
$$H_1^1(u)\left[E_1^1(u),\widetilde{T}_{13}^2(v)\right]=-H_1^1(u)\left[E_1^1(u),E_{13}^2(v)-E_1^2(v)E_2^2(v)\right]\widetilde{H}_3^2(v).$$
On the other hand, we compute by using \eqref{re:BTWT} and \eqref{re:q3:E1E2} that    
\begin{align*}
\left[T_{11}^{1}(u),\ \widetilde{T}^{2}_{13}(v)\right]
=&T_{11}^{1}(u)K(u,v)\widetilde{T}^{2}_{13}(v)+T_{12}^{1}(u)K(u,v)\tilde{T}^{2}_{23}(v)+T_{13}^{1}(u)K(u,v)\widetilde{T}^{2}_{33}(v)\\
=&-H_1^1(u)K(u,v)\left(E_{13}^2(v)-E_1^2(v)E_2^2(v)\right)\widetilde{H}_3^2(v)\\
&-H_1^1(u)E_1^1(u)K(u,v)E_2^2(v)\widetilde{H}_3^2(v)+H_1^1(u)E_{13}(u)K(u,v)\widetilde{H}_3^2(v)\\
=&-H_1^1(u)\left[E_1^1(u), E_2^2(v)\right]\widetilde{H}_3^2(v).    
\end{align*}
Hence,
$$H_1^1(u)\left[E_1^1(u),E_{13}^2(v)-E_1^2(v)E_2^2(v)\right]\widetilde{H}_3^2(v)
=-H_1^1(u)\left[E_1^1(u), E_2^2(v)\right]\widetilde{H}_3^2(v)E_1^1(u),$$
which implies \eqref{re:q3:E1E13} since $H_3(v)$ commutes with $E_1(u)$ and $H_1(u), H_3(v)$ are invertible.

For \eqref{re:q3:E13E2}, we deduce from \eqref{re:BTWT} that
$$0=\left[T_{13}^{1}(u),\widetilde{T}^{2}_{23}(v)\right]
=-\left[T_{13}^1(u),E_2^2(v)\right]\widetilde{H}_3^2(v)
-E_2^2(v)\left[T_{13}^1(u),\widetilde{T}_{33}^2(v)\right],$$
and
\begin{align*}
	\left[T_{13}^{1}(u),\ \tilde{T}^{2}_{33}(v)\right]=&T_{11}^{1}(u)K(u,v)\tilde{T}^{2}_{13}(v)+T_{12}^{1}(u)K(u,v)\tilde{T}^{2}_{23}(v)
	+T_{13}^{1}(u)K(u,v)\tilde{T}^{2}_{33}(v).
		\end{align*}
Then it follows from \eqref{re:q3:E1E2} that
$$\left[T_{13}^{1}(u),\tilde{T}^{2}_{33}(v)\right]=-H_1^1(u)\left[E_1^1(u), E_2^2(v)\right]\widetilde{H}_3^2(v).$$

Since $H_1(u)$ commutes with $E_2(v)$, we further deduce that
$$H_1^1(u)\left[E_{13}^1(u),E_2^2(v)\right]\widetilde{H}_3^2(v)
=-E_2^2(v)\left[T_{13}^1(u),\widetilde{T}_{33}^2(v)\right]
=H_1^1(u)E_2^2(v)\left[E_1^1(u), E_2^2(v)\right]\widetilde{H}_3^2(v),$$
which yields \eqref{re:q3:E13E2} since $H_1(u)$ and $H_3(v)$ are invertible.
\end{proof}

\begin{lemma}
\label{lem:q3:E1E13}
The following identities hold in $\mathrm{End}\left(\mathbb{C}^{1|1}\right)\otimes\mathrm{End}\left(\mathbb{C}^{1|1}\right)\otimes\mathrm{Y}(\mathfrak{q}_3)[[u^{-1}, v^{-1}]]$:
\begin{align}
&\left[E_1^1(u),E_{13}^2(v)\right]
=\frac{P}{u-v}\left(E_1^1(v)-E_1^1(u)\right)\left(E_{13}^2(v)-E_{13}^2(u)\right)\nonumber\\
&\qquad\qquad\qquad\qquad+\frac{Q}{u+v}\left(E_1^1(-v)-E_1^1(u)\right)\left(E_{13}^2(v)-E_{13}^2(-u)\right),\label{eq:q3:E1uE13v}\\
&P\left[E_1^1(u),E_{13}^2(u)\right]
-Q\left[E_1^1(u),E_{13}^2(-u)\right]=0,\label{eq:q3:E1uE13u1}\\
&2u\left[E_1^1(u),E_{13}^2(u)\right]=-Q\left(E_1^1(u)-E_1^1(-u)\right)\left(E_{13}^2(u)-E_{13}^2(-u)\right).
\label{eq:q3:E1uE13u2}
\end{align}
\end{lemma}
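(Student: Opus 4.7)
The strategy parallels Lemma~\ref{lem:E11E12}, but requires an auxiliary commutator. I would first establish
$[H_1^1(u), E_{13}^2(v)] = H_1^1(u)\bigl(K(u,v) E_{13}^2(v) - E_{13}^1(u) K(u,v)\bigr)$,
the $E_{13}$-analogue of \eqref{q2:eq:H1E1}. This is extracted from \eqref{re:BTWT} with $(a,b,c,d)=(1,1,1,3)$: substitute $T_{11}=H_1$, $T_{12}=H_1 E_1$, $T_{13}=H_1 E_{13}$ and the inverse-Gauss entries $\widetilde{T}_{13}=(E_1 E_2 - E_{13})\widetilde{H}_3$, $\widetilde{T}_{23}=-E_2\widetilde{H}_3$, $\widetilde{T}_{33}=\widetilde{H}_3$ coming from inverting $T=FHE$. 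Using Lemma~\ref{lem:com} (so that $H_1^1(u)$ commutes with $E_2^2(v)$ and $\widetilde{H}_3^2(v)$) together with \eqref{q2:eq:H1E1}, the $E_1^2 E_2^2$-pieces cancel on both sides and the desired auxiliary identity drops out.

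The main step is to apply the block RTT relation \eqref{re:BTT} with $(a,b,c,d)=(1,2,1,3)$:
$[T_{12}^1(u), T_{13}^2(v)] = K(u,v) T_{12}^1(u) T_{13}^2(v) - T_{12}^2(v) T_{13}^1(u) K(u,v)$.
Substitute $T_{12}=H_1 E_1$ and $T_{13}=H_1 E_{13}$, expand both sides by the Leibniz rule, and apply \eqref{q2:eq:HaHa}, \eqref{q2:eq:H1E1}, and the auxiliary identity for $[H_1^1, E_{13}^2]$ to migrate every $H_1$-factor to the outside. After dividing out the invertible factor $H_1^1(u) H_1^2(v)$ and reorganizing $K(u,v)$-products via Lemmas~\ref{re:PQrelation} and~\ref{lem:PQtransform}, one obtains a proto-identity having exactly the shape of \eqref{q2:eq:E1E1pf1} but with the second copy of $E_1^2$ replaced by $E_{13}^2$.

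From there the remainder of the argument is verbatim that of Lemma~\ref{lem:E11E12}: left-multiply by $R(-u,-v)$ to produce the common factor $1 - (u-v)^{-2} - (u+v)^{-2}$, specialize $v$ to each of its four zeros $\pm v_1, \pm v_2$, and form the two differences of the $\pm v_i$ pairs using \eqref{eq:Rzero1}--\eqref{eq:Rzero3}. Since $v_1 \ne v_2$ the resulting two linearly independent equations yield \eqref{eq:q3:E1uE13u1} and \eqref{eq:q3:E1uE13u2}; substituting these back into the proto-identity cancels the singular contributions at $v=\pm u$ and produces \eqref{eq:q3:E1uE13v}. The main obstacle is purely computational bookkeeping: because $E_1$ and $E_{13}$ play asymmetric roles here (unlike the symmetric $E_1$--$E_1$ situation of Lemma~\ref{lem:E11E12}), each Leibniz expansion generates extra terms in which $K(u,v)$ must be shuffled past YQ-form matrices via Lemma~\ref{lem:PQtransform}, and one must verify that all spurious $H_1$-, $E_1$-, and $E_2$-dependent pieces cancel cleanly before the $R(-u,-v)$ specialization trick can be invoked.
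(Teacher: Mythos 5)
Your proposal is correct, and it reaches the pivotal intermediate identity \eqref{eq:q3:E1uE13vpf} by a genuinely different route than the paper. The paper never returns to the RTT relation here and introduces no auxiliary $\left[H_1^1(u),E_{13}^2(v)\right]$ commutator; instead it starts from the already-proven \eqref{re:q3:E1E13}, writes $\left[E_1^1(u),E_{13}^2(v)\right]=-\left[E_1^1(u),E_2^2(v)\right]E_1^1(u)+E_1^2(v)\left[E_1^1(u),E_2^2(v)\right]+\left[E_1^1(u),E_1^2(v)\right]E_2^2(v)$, substitutes the known formulas \eqref{re:q3:E1E2} and \eqref{q2:eq:E1E1}, checks that all $E_2^2(v)$-dependent pieces cancel, and invokes \eqref{eq:q2:E1uE1u}--\eqref{eq:q2:E1uE1u2} to absorb the leftover $\left[E_1^1(u),E_1^2(\pm u)\right]$ terms. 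Your route is sound: both computations converge to the same expression $\left[E_1^1(u),E_{13}^2(v)\right]=\left(E_1^2(v)-N\right)M-ME_1^1(u)$ with $M=K(u,v)E_{13}^2(v)-E_{13}^1(u)K(u,v)$ and $N=K(u,v)E_1^2(v)-E_1^1(u)K(u,v)$, and the $PQ$/$QP$ cross terms (using $QP=-PQ$) do assemble into the single term $-\frac{PQ}{u^2-v^2}E_1^1(u,-u)E_{13}^2(u,-u)$ of \eqref{eq:q3:E1uE13vpf}, so the bookkeeping you flag as the main obstacle does close up. Two remarks: your auxiliary identity is obtained more directly from \eqref{re:BTT} applied to $\left[T_{11}^1(u),T_{13}^2(v)\right]$ together with $T_{13}(u)=H_1(u)E_{13}(u)$ and \eqref{q2:eq:HaHa}, without passing through $\widetilde{T}$; and your approach buys independence from \eqref{eq:q2:E1uE1u}--\eqref{eq:q2:E1uE1u2} and a tighter parallel with Lemma~\ref{lem:E11E12}, at the cost of redoing Leibniz expansions that the paper recycles from Lemmas~\ref{lem:E1E2} and~\ref{lem:E11E12}. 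The endgame (left-multiplying by $R(-u,-v)$, specializing $v$ to $\pm v_1,\pm v_2$, and differencing via \eqref{eq:Rzero1}--\eqref{eq:Rzero3}) is identical to the paper's.
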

\begin{proof}
We deduce by \eqref{re:q3:E1E13} that
\begin{align*}
\left[E_1^1(u),E_{13}^2(v)\right]
=&\left[E_1^1(u),E_{13}^2(v)-E_1^2(v)E_2^2(v)+E_1^2(v)E_2^2(v)\right]\\
=&-\left[E_1^1(u),E_2^2(v)\right]E_1^1(u)
+E_1^2(v)\left[E_1^1(u),E_2^2(v)\right]
+\left[E_1^1(u),E_1^2(v)\right]E_2^2(v).
\end{align*}
By using \eqref{re:q3:E1E2}, we compute that
\begin{align*}
\left[E_1^1(u),E_{13}^2(v)\right]
=&-\frac{P}{u-v}\left(E_{13}^2(v)-E_{13}^2(u)\right)E_1^1(u)
+\frac{P}{u-v}\left(E_1^2(v)-E_1^2(u)\right)E_2^2(v)E_1^1(u)\\
&-\frac{Q}{u+v}\left(E_{13}^2(v)-E_{13}^2(-u)\right)E_1^1(u)
+\frac{Q}{u+v}\left(E_1^2(v)-E_1^2(-u)\right)E_2^2(v)E_1^1(u)\\
&+\frac{P}{u-v}E_1^1(v)\left(E_{13}^2(v)-E_{13}^2(u)\right)
-\frac{P}{u-v}E_1^1(v)\left(E_1^2(v)-E_1^2(u)\right)E_2^2(v)\\
&+\frac{Q}{u+v}E_1^1(-v)\left(E_{13}^2(v)-E_{13}^2(-u)\right)
-\frac{Q}{u+v}E_1^1(-v)\left(E_1^2(v)-E_1^2(-u)\right)E_2^2(v)\\
&+\left[E_1^1(u),E_1^2(v)\right]E_2^2(v).
\end{align*}
Note that
\begin{align*}
\left(E_{13}^2(v)-E_{13}^2(u)\right)E_1^1(u)
=&E_1^1(u)\left(E_{13}^2(v)-E_{13}^2(u)\right)-\left[E_1^1(u),E_{13}^2(v)-E_{13}^2(u)\right],
\end{align*}
and
\begin{align*}
E_2^2(v)E_1^1(u)=&E_1^1(u)E_2^2(v)-\left[E_1^1(u),E_2^2(v)\right]\\
=&E_1^1(u)E_2^2(v)-\frac{P}{u-v}\left(E_{13}^2(v)-E_{13}^2(u)\right)
-\frac{Q}{u+v}\left(E_{13}^2(v)-E_{13}^2(-u)\right)\\
&+\frac{P}{u-v}\left(E_1^2(v)-E_1^2(u)\right)E_2^2(v)
+\frac{Q}{u+v}\left(E_1^2(v)-E_1^2(-u)\right)E_2^2(v).
\end{align*}
We further deduce that
\begin{align*}
R(u,v)\left[E_1^1(u),E_{13}^2(v)\right]
=&-\frac{P}{u-v}\left[E_1^1(u),E_{13}^2(u)\right]
-\frac{Q}{u+v}\left[E_1^1(u),E_{13}^2(-u)\right]\\
&-\frac{PQ}{u^2-v^2}\left(E_1^1(u)-E_1^1(-u)\right)\left(E_{13}^2(u)-E_{13}^2(-u)\right)\\
&+R(u,v)\frac{P}{u-v}\left(E_1^1(v)-E_1^1(u)\right)\left(E_{13}^2(v)-E_{13}^2(u)\right)\\
&+R(u,v)\frac{Q}{u+v}\left(E_1^1(-v)-E_1^1(u)\right)\left(E_{13}^2(v)-E_{13}^2(-u)\right)\\
&-R(u,v)\frac{P}{u-v}\left(E_1^1(v)-E_1^1(u)\right)\left(E_1^2(v)-E_1^2(u)\right)E_2^2(v)\\
&-R(u,v)\frac{Q}{u+v}\left(E_1^1(-v)-E_1^1(u)\right)\left(E_1^2(v)-E_1^2(-u)\right)E_2^2(v)\\
&+R(u,v)\left[E_1^1(u),E_1^2(v)\right]E_2^2(v)\\
&+\frac{P}{u-v}\left[E_1^1(u),E_1^2(u)\right]E_2^2(v)
+\frac{Q}{u+v}\left[E_1^2(u),E_1^2(-u)\right]E_2^2(v)\\
&+\frac{PQ}{u^2-v^2}\left(E_1^1(u)-E_1^1(-u)\right)\left(E_1^2(u)-E_1^2(-u)\right)E_2^2(v),
\end{align*}
where $R(u,v)=1-\frac{P}{u-v}-\frac{Q}{u+v}$.

Now, the identity \eqref{q2:eq:E1E1} shows that
\begin{align*}
\left[E_1^1(u),E_1^2(v)\right]
=&\frac{P}{u-v}\left(E_1^1(v)-E_1^1(u)\right)\left(E_1^2(v)-E_1^2(u)\right)\\
&+\frac{Q}{u+v}\left(E_1^1(-v)-E_1^1(u)\right)\left(E_1^2(v)-E_1^2(-u)\right).
\end{align*}
Meanwhile, the identities \eqref{eq:q2:E1uE1u} and \eqref{eq:q2:E1uE1u2} ensure that
\begin{align*}
&\frac{P}{u-v}\left[E_1^1(u),E_1^2(u)\right]
+\frac{Q}{u+v}\left[E_1^2(u),E_1^2(-u)\right]\\
=&-\frac{PQ}{u^2-v^2}\left(E_1^1(u)-E_1^1(-u)\right)\left(E_1^2(u)-E_1^2(-u)\right).
\end{align*}
Thus we obtain that
\begin{equation}
\begin{aligned}
R(u,v)\left[E_1^1(u),E_{13}^2(v)\right]
=&-\frac{P}{u-v}\left[E_1^1(u),E_{13}^2(u)\right]
-\frac{Q}{u+v}\left[E_1^1(u),E_{13}^2(-u)\right]\\
&-\frac{PQ}{u^2-v^2}\left(E_1^1(u)-E_1^1(-u)\right)\left(E_{13}^2(u)-E_{13}^2(-u)\right)\\
&+R(u,v)\frac{P}{u-v}\left(E_1^1(v)-E_1^1(u)\right)\left(E_{13}^2(v)-E_{13}^2(u)\right)\\
&+R(u,v)\frac{Q}{u+v}\left(E_1^1(-v)-E_1^1(u)\right)\left(E_{13}^2(v)-E_{13}^2(-u)\right).
\end{aligned}
\label{eq:q3:E1uE13vpf}
\end{equation}
By left multiplying $R(-u,-v)$ on both sides of \eqref{eq:q3:E1uE13vpf}, we have
\begin{align*}
&\left(1-\frac{1}{(u-v)^2}-\frac{1}{(u+v)^2}\right)\left[E_1^1(u),E_{13}^2(v)\right]\\
=&-R(-u,-v)\frac{P}{u-v}\left[E_1^1(u),E_{13}^2(u)\right]
-R(-u,-v)\frac{Q}{u+v}\left[E_1^1(u),E_{13}^2(-u)\right]\\
&-R(-u,-v)\frac{PQ}{u^2-v^2}\left(E_1^1(u)-E_1^1(-u)\right)\left(E_{13}^2(u)-E_{13}^2(-u)\right)\\
&+\left(1-\frac{1}{(u-v)^2}-\frac{1}{(u+v)^2}\right)\frac{P}{u-v}\left(E_1^1(v)-E_1^1(u)\right)\left(E_{13}^2(v)-E_{13}^2(u)\right)\\
&+\left(1-\frac{1}{(u-v)^2}-\frac{1}{(u+v)^2}\right)\frac{Q}{u+v}\left(E_1^1(-v)-E_1^1(u)\right)\left(E_{13}^2(v)-E_{13}^2(-u)\right),
\end{align*}
since $R(-u,-v)R(u,v)=1-\frac{1}{(u-v)^2}-\frac{1}{(u+v)^2}$.

The formal series $1-\frac{1}{(u-v)^2}-\frac{1}{(u+v)^2}$ has four zeros $v=\pm v_i\in\mathbb{C}[[u^{-1}]]$ for $i=1,2$, where
$$v_1=\left(1+u^{-2}+(1+4u^{-2})^{\frac{1}{2}}\right)^{-\frac{1}{2}},\quad v_2=\left(1+u^{-2}-(1+4u^{-2})^{\frac{1}{2}}\right)^{-\frac{1}{2}}.$$
Let $v=\pm v_i$ in the above equality. We obtain the following four identities in $\mathrm{End}\left(\mathbb{C}^{1|1}\right)\otimes\mathrm{End}\left(\mathbb{C}^{1|1}\right)\otimes\mathrm{Y}(\mathfrak{q}_3)[[u^{-1}]]$:
\begin{align*}
&R(-u,\mp v_i)\frac{P}{u\mp v_i}\left[E_1^1(u),E_{13}^2(u)\right]
+R(-u,\mp v_i)\frac{Q}{u\pm v_i}\left[E_1^1(u),E_{13}^2(-u)\right]\\
=&-R(-u,\mp v_i)\frac{PQ}{u^2-v_i^2}\left(E_1^1(u)-E_1^1(-u)\right)\left(E_{13}^2(u)-E_{13}^2(-u)\right)
\end{align*}
for $i=1,2$. By \eqref{eq:Rzero1}, \eqref{eq:Rzero2} and \eqref{eq:Rzero3} in the proof of Lemma~\ref{lem:E11E12}, we take the difference of two equalities corresponding to two signs for each $i=1,2$, and deduce that
\begin{align*}
&\left(P+\frac{2u}{u^2-v_i^2}\right)\left[E_1^1(u),E_{13}^2(u)\right]
-\left(Q+\frac{2u}{u^2-v_i^2}\right)\left[E_1^1(u),E_{13}^2(-u)\right]\\
=&-\frac{P+Q}{u^2-v_i^2}\left(E_1^1(u)-E_1^1(-u)\right)\left(E_{13}^2(u)-E_{13}^2(-u)\right).
\end{align*}
Since $v_1\neq v_2$, we obtain \eqref{eq:q3:E1uE13u1} and 
$$2u\left(\left[E_1^1(u),E_{13}^2(u)\right]-\left[E_1^1(u),E_{13}^2(-u)\right]\right)
=-(P+Q)\left(E_1^1(u)-E_1^1(-u)\right)\left(E_{13}^2(u)-E_{13}^2(-u)\right),$$
which yields \eqref{eq:q3:E1uE13u2} by left multiplying $Q$. Consequently,
\begin{align*}
&\frac{P}{u-v}\left[E_1^1(u),E_{13}^2(u)\right]+\frac{Q}{u+v}\left[E_1^1(u),E_{13}^2(-u)\right]\\
=&-\frac{PQ}{u^2-v^2}\left(E_1^1(u)-E_1^1(-u)\right)\left(E_{13}^2(u)-E_{13}^2(-u)\right).
\end{align*}
Therefore, the identity \eqref{eq:q3:E1uE13vpf} reduces to the identity \eqref{eq:q3:E1uE13v} since $R(u,v)$ is invertible.
\end{proof}

\begin{lemma}\label{lem:q3:E13E2}
The following identities hold in $\mathrm{End}\left(\mathbb{C}^{1|1}\right)\otimes\mathrm{End}\left(\mathbb{C}^{1|1}\right)\otimes\mathrm{Y}(\mathfrak{q}_3)[[u^{-1}, v^{-1}]]$:
\begin{align}
&\begin{aligned}
\left[\mathcal{E}_{13}^1(u),E_2^2(v)\right]
=&\left(\mathcal{E}_{13}^1(v)-\mathcal{E}_{13}^1(u)\right)\left(E_2^2(v)-E_2^2(u)\right)\frac{P}{u-v}\\
&+\left(\mathcal{E}_{13}^1(-v)-\mathcal{E}_{13}^1(u)\right)\left(E_2^2(v)-E_2^2(-u)\right)\frac{Q}{u+v},
\end{aligned}
\label{eq:q3:E13uuE2v}\\
&\left[\mathcal{E}_{13}^1(v), E_2^2(v)\right]P+\left[\mathcal{E}_{13}^1(-v), E_2^2(v)\right]Q=0,
\label{eq:q3:E13vvE2v1}\\
&2v\left[\mathcal{E}_{13}^1(v),E_2^2(v)\right]
=-\left(\mathcal{E}_{13}^1(v)-\mathcal{E}_{13}^1(-v)\right)(E_2^2(v)-E_2^2(-v))Q,
\label{eq:q3:E13vvE2v2}
\end{align}
where $\mathcal{E}_{13}(u)=E_{13}(u)-E_1(u)E_2(u).$
\end{lemma}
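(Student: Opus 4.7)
The plan is to mirror the proof of Lemma~\ref{lem:q3:E1E13}, exchanging the roles of the two auxiliary spaces and replacing the input identity \eqref{re:q3:E1E13} by its companion \eqref{re:q3:E13E2}. I would begin by expanding the target via the definition of $\mathcal{E}_{13}(u)$ and Leibniz's rule:
\[
[\mathcal{E}_{13}^1(u), E_2^2(v)] = [E_{13}^1(u), E_2^2(v)] - E_1^1(u)[E_2^1(u), E_2^2(v)] - [E_1^1(u), E_2^2(v)]E_2^1(u).
\]
Into this I would substitute (i) \eqref{re:q3:E13E2} for the first commutator, (ii) \eqref{re:q3:E1E2} for $[E_1^1(u), E_2^2(v)]$, and (iii) for $[E_2^1(u), E_2^2(v)]$ the $E_2$-analogue of \eqref{q2:eq:E1E1}, which is obtained by applying the embedding $\psi_1$ of Theorem~\ref{thm:embedding} to $E_1(u)$ in $\Y(\mathfrak{q}_2)$.

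Using the $PQ$-commutation rules of Lemmas~\ref{re:PQrelation} and \ref{lem:PQtransform} to move every matrix-valued factor past the appropriate $P$ and $Q$, and using $\mathcal{E}_{13}(u) = E_{13}(u) - E_1(u)E_2(u)$ to recombine differences, the goal is to rewrite everything in the form
\[
R(u,v)\,[\mathcal{E}_{13}^1(u), E_2^2(v)] = R(u,v)\cdot\Phi(u,v) - \tfrac{P}{u-v}\Psi_+(v) - \tfrac{Q}{u+v}\Psi_-(v) - \tfrac{PQ}{u^2-v^2}\Xi(v),
\]
where $\Phi(u,v)$ is the right-hand side of \eqref{eq:q3:E13uuE2v}, $\Psi_\pm(v) = [\mathcal{E}_{13}^1(\pm v), E_2^2(v)]$, and $\Xi(v)$ is a quadratic correction built from $\mathcal{E}_{13}(\pm v)$ and $E_2(\pm v)$; the combined contribution from \eqref{eq:q2:E1uE1u} and \eqref{eq:q2:E1uE1u2} applied to $E_2$ is what allows us to collect all corrections into this single $\Xi(v)$.

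From this point the analysis is identical in spirit to Lemma~\ref{lem:q3:E1E13}: multiply both sides on the left by $R(-u,-v)$, use $R(-u,-v)R(u,v) = 1 - (u-v)^{-2} - (u+v)^{-2}$, and evaluate at the four zeros of this series—now viewed as four formal series $u = \pm u_1(v),\pm u_2(v)$ in $\mathbb{C}[[v^{-1}]]$ for fixed $v$. Taking the difference between the two sign choices at each $u_i$ and applying \eqref{eq:Rzero1}--\eqref{eq:Rzero3} produces two linear equations in $\Psi_+(v),\Psi_-(v),\Xi(v)$; since $u_1\ne u_2$, these determine \eqref{eq:q3:E13vvE2v1} and \eqref{eq:q3:E13vvE2v2}. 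Substituting the latter back annihilates the correction terms, and \eqref{eq:q3:E13uuE2v} follows from the invertibility of $R(u,v)$.

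The main obstacle will be the bookkeeping in the middle step: there are three commutators contributing, each a sum of four to six $P/Q$-weighted matrix products, and the verification that the mixed quadratic terms $E_1^1(\pm u)E_2^1(\pm u)E_2^2(\pm v)$ and $E_2^2(v)E_1^2(\pm v)E_2^2(\pm v)$ conspire to assemble into the differences $\mathcal{E}_{13}^1(\pm v)-\mathcal{E}_{13}^1(u)$ appearing in \eqref{eq:q3:E13uuE2v} requires careful use of Lemma~\ref{lem:PQtransform} to swap the auxiliary-space labels on every $E_1$ and $E_2$ factor, together with the identity $P^{12}Q^{12}=-Q^{12}$ implicit in the commutation rules. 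Once this assembly is completed, the specialization argument proceeds exactly as in the previous lemma.
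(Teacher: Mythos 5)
Your proposal is essentially the paper's intended argument: the paper's own proof of this lemma consists of the single sentence ``The proof is similar to Lemma~\ref{lem:q3:E1E13},'' and your elaboration --- expand $[\mathcal{E}_{13}^1(u),E_2^2(v)]$ via Leibniz, feed in \eqref{re:q3:E13E2}, \eqref{re:q3:E1E2} and the $\psi_1$-image of Lemma~\ref{lem:E11E12}, then run the $R(-u,-v)$ multiplication and specialization at the four zeros --- is exactly that mirrored computation. The one bookkeeping point to watch is that, to match the stated identities (where $P$, $Q$ and $\frac{P}{u-v}$, $\frac{Q}{u+v}$ sit on the \emph{right} of the commutators), the $R$-matrix and the correction terms $\Psi_\pm$, $\Xi$ should appear multiplied on the right rather than the left as in your displayed schematic, consistent with your own remark about exchanging the roles of the two auxiliary spaces.
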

\begin{proof}
    The proof is similar to the Lemma \ref{lem:q3:E1E13}. 
\end{proof}

\begin{proposition}
The following Serre relations hold in $\mathrm{End}\left(\mathbb{C}^{1|1}\right)^{\otimes3}\otimes\mathrm{Y}(\mathfrak{q}_3)[[u^{-1}, v^{-1},w^{-1}]]$:
\begin{equation}
\begin{aligned}
&\frac{P^{12}}{u-v}\left[E_a^1(u,v),\ \left[E_a^2(u,v),\ E_b^3(w)\right]\right]+\frac{Q^{12}}{u+v}\left[E_a^1(u,-v),\ \left[E_a^2(-u,v),\ E_b^3(w)\right]\right]=0, 
\end{aligned}\label{eq:q3:Serre}
\end{equation}
where $E_a(u,v)=E_a(u)-E_a(v)$, for $a=1,b=2$ or $a=2,b=1$.
\end{proposition}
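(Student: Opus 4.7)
The plan is to reduce the iterated commutator to explicit expressions using the commutation formulas established in Sections~\ref{se:q2relation} and the earlier parts of this section, and then exploit the identities among $P^{ij}, Q^{ij}$ in Lemma~\ref{re:PQrelation} to force the cancellations. Throughout, I will focus on the case $(a,b)=(1,2)$; the case $(a,b)=(2,1)$ is completely analogous by swapping the roles of $E_1$ and $E_2$ and invoking Lemma~\ref{lem:q3:E13E2} in place of Lemma~\ref{lem:q3:E1E13}.

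First, I would expand the inner commutator $[E_1^2(u,v),E_2^3(w)]$ via \eqref{re:q3:E1E2}: writing $E_1^2(u,v)=E_1^2(u)-E_1^2(v)$, this yields a sum of terms of the shape $K^{23}(\ast,w)E_{13}^3(w)$, $E_{13}^2(\ast)K^{23}(\ast,w)$, $E_1^2(\ast)K^{23}(\ast,w)E_2^3(w)$, and $K^{23}(\ast,w)E_1^3(w)E_2^3(w)$, with $\ast\in\{u,v\}$. Next, I would take the outer commutator with $E_1^1(u,v)$, commuting $E_1^1$ past each factor using \eqref{q2:eq:E1E1} for $[E_1^1,E_1^j]$, Lemma~\ref{lem:q3:E1E13} for $[E_1^1,E_{13}^3]$, and \eqref{re:q3:E1E2} for $[E_1^1,E_2^3]$. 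This produces a lengthy but tractable expression involving $K$ factors in the slot pairs $(1,2),(1,3),(2,3)$ and various $E_1,E_2,E_{13}$ at shifted arguments.

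Then I would left-multiply the above by $\frac{P^{12}}{u-v}$, add its $(u,-v)$/$(-u,v)$ counterpart left-multiplied by $\frac{Q^{12}}{u+v}$, and transport the factors $P^{12}, Q^{12}$ through all superscript-$1$ and superscript-$2$ operators. The key moves are the identities of Lemma~\ref{re:PQrelation}, in particular $P^{12}K^{13}=K^{23}P^{12}$, $Q^{12}K^{13}=-K^{23}Q^{12}$ (up to the appropriate sign conventions), and the transport rule \eqref{re:PQtransform1} which turns $Q^{12}X^1(u)=X^2(-u)Q^{12}$. It is precisely this last transport rule that explains why the arguments in the $Q^{12}$-term of the statement appear as $(u,-v)$ and $(-u,v)$ rather than $(u,v)$: after pulling $Q^{12}$ through $E_a^1(u)$ and $E_a^2(v)$, the arguments acquire the expected sign flips.

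The cancellations then proceed in two layers. The terms not involving $E_b^3(w)$ (that is, the pure $E_a$--$E_a$--$E_{13}$ pieces) collapse because the structure constants in \eqref{eq:q3:E1uE13v} and \eqref{q2:eq:E1E1} are themselves of the form $\frac{P}{u-v}X(v,u)Y(v,u)+\frac{Q}{u+v}X(-v,u)Y(v,-u)$, which is annihilated by the quantum symmetrizer in slots $(1,2)$. The remaining terms, which carry an explicit factor $E_b^3(w)$ on the right, cancel in parallel by the same mechanism once the $P^{12}, Q^{12}$ factors have been brought into standard position. The main obstacle is organisational rather than conceptual: the expansion produces on the order of thirty monomials in $E$'s and $K$'s, and verifying the cancellation requires a careful and uniform bookkeeping of signs, argument shifts, and slot labels, together with repeated appeal to the at first glance bewildering list of identities in Lemma~\ref{re:PQrelation}. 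Once this bookkeeping is in place, the identity \eqref{eq:q3:Serre} falls out as the net result of the symmetrization.
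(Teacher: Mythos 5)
Your proposal outlines a brute-force route --- fully expanding $\left[E_a^1(u,v),\left[E_a^2(u,v),E_b^3(w)\right]\right]$ via \eqref{re:q3:E1E2}, \eqref{q2:eq:E1E1} and Lemma~\ref{lem:q3:E1E13}, then symmetrizing --- but it never executes the computation, and the two cancellation mechanisms you invoke are not correct as stated. First, expressions of the shape $\frac{P}{u-v}X(v,u)Y(v,u)+\frac{Q}{u+v}X(-v,u)Y(v,-u)$ are \emph{not} annihilated by applying $\frac{P^{12}}{u-v}(\cdot)+\frac{Q^{12}}{u+v}(\cdot)$ merely on account of their form: the vanishing actually used in the paper rests on the additional ``diagonal'' identities \eqref{eq:q3:E1uE13u1}--\eqref{eq:q3:E1uE13u2} (and \eqref{eq:q2:E1uE1u}--\eqref{eq:q2:E1uE1u2}), which control the specializations $\left[E_1^1(u),E_{13}^2(\pm u)\right]$ surviving the symmetrization; without citing and using these, your ``collapse'' step has no justification. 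Second, the terms carrying an explicit $E_b^3(w)$ bring in $K^{13}(u,w)$ and $K^{23}(v,w)$ with poles at $u=\pm w$ and $v=\pm w$ --- a structure entirely absent from the first group of terms --- so the assertion that they ``cancel in parallel by the same mechanism'' is unsupported; this is precisely the part of the computation your plan leaves blank.

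The paper sidesteps all of this with a two-step reduction that your proposal misses: it first specializes the third factor to the single coefficient $E_b^{(1)}$, for which \eqref{re:q3:E1E2} gives the clean identity $\left[E_1^2(u),E_2^{(1),3}\right]=E_{13}^2(u)\left(P^{23}-Q^{23}\right)$, with no $w$-dependence and no $E_1E_2$ product terms; the Serre relation for $E_b^{(1)}$ then follows from \eqref{eq:q3:E1uE13v} together with \eqref{eq:q3:E1uE13u1}--\eqref{eq:q3:E1uE13u2}. The general statement is recovered afterwards by taking the commutator with $\widetilde{H}_3^4(w)$ and using \eqref{q2:eq:H2E1}, which regenerates the full series $E_b(w)$ up to the invertible factor $(P^{34}+Q^{34})\widetilde{H}_3^4(w)$ that commutes with everything in slots $1$ and $2$. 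If you want to salvage your direct approach you must actually carry out the thirty-odd-term expansion and exhibit all cancellations, including the $w$-dependent ones; as written, the proof is a plan whose central verification is missing.
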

\begin{proof}
We verify the case where $a=1$ and $b=2$. The other case can be checked similarly. For convenience, we set $E_{13}(u,v)=E_{13}(u)-E_{13}(v)$.

We observe from \eqref{re:q3:E1E2} that
$$\left[E_1^2(u),E_2^{(1),3}\right]=E_{13}^2(u)\left(P^{23}-Q^{23}\right),$$
where $E_2^{(1)}$ is the coefficient of $v^{-1}$ in $E_2(v)$. By using \eqref{eq:q3:E1uE13v}, we compute that
\begin{align*}
&\left[E_1^1(u,v),E_{13}^2(u,v)\right]-\left[E_1^1(u),E_{13}^2(u)\right]-\left[E_1^1(v),E_{13}^2(v)\right]\\
=&-\frac{Q^{12}}{u+v}E_1^1(-v,u)E_{13}^2(v,-u)-\frac{Q^{12}}{u+v}E_1^1(-u,v)E_{13}^2(u,-v),
\end{align*}
and, 
\begin{align*}
&\left[E_1^1(u,-v),E_{13}^2(-u,v)\right]-\left[E_1^1(u),E_{13}^2(-u)\right]+\left[E_1^1(-v),E_{13}^2(v)\right]\\
=&-\frac{P^{12}}{u-v}E_1^1(v,u)E_{13}^2(v,u)-\frac{P^{12}}{u-v}E_1^1(-u,-v)E_{13}^2(-u,-v).
\end{align*}

Hence,
\begin{align*}
&\frac{P^{12}}{u-v}\left[E_1^1(u,v),E_{13}^2(u,v)\right]
+\frac{Q^{12}}{u+v}\left[E_1^1(u,-v),E_{13}^2(-u,v)\right]\\    
=&\frac{P^{12}}{u-v}\left[E_1^1(u),E_{13}^2(u)\right]+\frac{Q^{12}}{u+v}\left[E_1^1(u),E_{13}^2(-u)\right]+\frac{P^{12}Q^{12}}{u^2-v^2}E_1^1(u,-u)E_{13}^2(u,-u)\\
&+\frac{P^{12}}{u-v}\left[E_1^1(v),E_{13}^2(v)\right]+\frac{Q^{12}}{u+v}\left[E_1^1(-v),E_{13}^2(v)\right]+\frac{P^{12}Q^{12}}{u^2-v^2}E_1^1(v,-v)E_{13}^2(v,-v),
\end{align*}
which vanishes by the identities \eqref{eq:q3:E1uE13u1} and \eqref{eq:q3:E1uE13u2}. Therefore,
\begin{equation}
\begin{aligned}
&\frac{P^{12}}{u-v}\left[E_1^1(u,v),\left[E_1^2(u,v),E_2^{(1),3}\right]\right]+\frac{Q^{12}}{u+v}\left[E_1^1(u,-v),\left[E_1^2(-u,v),E_2^{(1),3}\right]\right]=0.
\end{aligned}
\label{eq:serre:pf1}
\end{equation}

On the other hand, \eqref{q2:eq:H2E1} yields that
$$\left[E_2^{(1),3},\widetilde{H}_3^4(u)\right]=
(P^{34}+Q^{34})E_2^4(u)\widetilde{H}_3^4(u),$$
where $\widetilde{H}_3(u)$ is the inverse of $H_3(u)$. Since $H_3(u)$ commutes with $E_1(v)$, we obtain that
\begin{align*}
&\frac{P^{12}}{u-v}\left[E_1^1(u,v),\left[E_1^2(u,v,(P^{34}+Q^{34})E_2^4(u)\widetilde{H}_3^4(u)\right]\right]\\
+&\frac{Q^{12}}{u+v}\left[E_1^1(u,-v),\left[E_1^2(-u,v),(P^{34}+Q^{34})E_2^4(u)\widetilde{H}_3^4(u)\right]\right]=0,
\end{align*}
by taking commutator with $\widetilde{H}_3^4(u)$ on both sides of \eqref{eq:serre:pf1}. This proves the identity \eqref{eq:q3:Serre} since $P^{34}+Q^{34}$ and $\widetilde{H}_3^4(u)$ are invertible and commutes with $P^{12}, Q^{12}, E_1^1(u), E_1^2(v)$. 
\end{proof}

\begin{remark}
By comparing the matrix entries on both sides of the identity \eqref{eq:q3:Serre}, we deduce the following Serre relations:
  \begin{align*}
        &\frac{\left[e_a(u)-e_a(v),\left[e_a(u)-e_a(v),e_b(w)\right]\right]}{u-v}
+\frac{\left[\bar{e}_a(u)-\bar{e}_a(-v),\left[\bar{e}_a(-u)-\bar{e}_a(v), e_b(w)\right]\right]}{u+v}=0,\\
&\frac{\left[\bar{e}_a(u)-\bar{e}_a(v),\left[e_a(u)-e_a(v),e_b(w)\right]\right]}{u-v}
-\frac{\left[e_a(u)-e_a(-v),\left[\bar{e}_a(-u)-\bar{e}_a(v), e_b(w)\right]\right]}{u+v}=0,\\
&\frac{\left[e_a(u)-e_a(v),\left[\bar{e}_a(u)-\bar{e}_a(v),e_b(w)\right]\right]}{u-v}
+\frac{\left[\bar{e}_a(u)-\bar{e}_a(-v),\left[e_a(-u)-e_a(v), e_b(w)\right]\right]}{u+v}=0,\\
&\frac{\left[\bar{e}_a(u)-\bar{e}_a(v),\left[\bar{e}_a(u)-\bar{e}_a(v),e_b(w)\right]\right]}{u-v}
-\frac{\left[e_a(u)-e_a(-v),\left[e_a(-u)-e_a(v), e_b(w)\right]\right]}{u+v}=0.
    \end{align*}
\end{remark}

\begin{remark}
By comparing the coefficients of $u^{-1}$ on both sides of \eqref{eq:q3:Serre}, we obtain the following simplified Serre relation
\begin{equation}
P^{12}\left[E_a^1(v),\left[E_a^2(v),\ E_b^3(w)\right]\right]
+Q^{12}\left[E_a^1(-v),\left[E_a^2(v),\ E_b^3(w)\right]\right]=0.
\label{eq:serresim}
\end{equation}
We will show in Theorem~\ref{thm:isomorphism} that the isomorphism between the Drinfeld presentation and the RTT presentation can be established by merely using the simplified Serre relation~\ref{eq:serresim}. Hence, we only include these simplified Serre relations \eqref{eq:Dr:eserre1}-\eqref{eq:Dr:eserre4} in the defining relation for the Drinfeld presentation in Theorem~\ref{thm:isomorphism}.   
\end{remark}

\section{The Drinfeld Presentation}\label{se:Maintheorem}

In this section, we prove our main theorem, which establishes the isomorphism between the Drinfeld presentation and the R-matrix presentation for the queer super-Yangian.

\begin{theorem}\label{thm:isomorphism}
The super-Yangian $\YR(\mathfrak{q}_n)$ is generated by even elements $h_a^{(r)},  e_b^{(r)}, f_b^{(r)}$ and odd elements $\bar{h}_a^{(r)}, \bar{e}_b^{(r)}, \bar{f}_b^{(r)}$ for $a=1,2,\ldots, n$, $b=1,2,\ldots,n-1$, and $r\geqslant1$. We set
\begin{align*}
h_a(u)=&1+\sum\limits_{r\geqslant1}h_a^{(r)}u^{-r},&
e_b(u)=&\sum\limits_{r\geqslant1}e_b^{(r)}u^{-r},&
f_b(u)=&\sum\limits_{r\geqslant1}f_b^{(r)}u^{-r},\\
\bar{h}_a(u)=&\sum\limits_{r\geqslant1}\bar{h}_a^{(r)}u^{-r},&
\bar{e}_b(u)=&\sum\limits_{r\geqslant1}\bar{e}_b^{(r)}u^{-r},&
\bar{f}_b(u)=&\sum\limits_{r\geqslant1}\bar{f}_b^{(r)}u^{-r}.
\end{align*}
Then the mere relations satisfied by these generators are given as follows:
\begin{align}
&\left[h_a(u), h_b(v)\right]
=\left[h_a(u),\bar{h}_b(v)\right]=\left[\bar{h}_a(u), h_b(v)\right]=\left[\bar{h}_a(u),\bar{h}_b(v)\right]=0,
\qquad\text{ if }a\neq b, \label{eq:Dr:hahb}\\
&\left[h_a(u),h_a(v)\right]=\frac{u-v}{(u+v)(u-v-1)}\left(\bar{h}_a(u)\bar{h}_a(v)-\bar{h}_a(-v)\bar{h}_a(-u)\right),
\label{eq:Dr:haha}\\
&\left[h_a(u),\bar{h}_a(v)\right]
=\frac{\bar{h}_a(u)h_a(v)-\bar{h}_a(v)h_a(u)}{u-v}
+\frac{h_a(u)\bar{h}_a(v)-h_a(-v)\bar{h}_a(-u)}{u+v},
\label{eq:Dr:hahta}\\
&\left[\bar{h}_a(u),\bar{h}_a(v)\right]
=\frac{h_a(u)h_a(v)-h_a(-v)h_a(-u)}{u+v}
-\frac{\bar{h}_a(u)\bar{h}_a(v)-\bar{h}_a(v)\bar{h}_a(u)}{u-v},
\label{eq:Dr:htahta}\\
&\left[h_a(u),e_b(v)\right]
=\left[\bar{h}_a(u),e_b(v)\right]=\left[h_a(u),\bar{e}_b(v)\right]=\left[\bar{h}_a(u),\bar{e}_b(v)\right]=0, 
\qquad\text{if }a\neq b,b+1,\label{eq:Dr:haeb}\\
&\left[h_a(u),e_a(v)\right]=\left[\bar{h}_a(-u),\bar{e}_a(v)\right]
=h_a(u)\frac{e_a(v)-e_a(u)}{u-v}+\bar{h}_a(-u)\frac{\bar{e}_a(v)-\bar{e}_a(u)}{u-v},
\label{eq:Dr:haea}\\
&\left[h_a(u),\bar{e}_a(v)\right]
=-\left[\bar{h}_a(-u),e_a(v)\right]
=h_a(u)\frac{\bar{e}_a(v)-\bar{e}_a(-u)}{u+v}+\bar{h}_a(-u)\frac{e_a(v)-e_a(-u)}{u+v},
\label{eq:Dr:haeta}\\
&\left[\bar{h}_a(u),e_a(v)\right]
=\bar{h}_{a}(u)\frac{e_a(v)-e_a(u)}{u-v}+h_a(-u)\frac{\bar{e}_a(v)-\bar{e}_a(u)}{u-v},
\label{eq:Dr:htaea}\\
&\left[h_{a+1}(u),e_a(v)\right]
=h_{a+1}(u)\frac{e_a(u)-e_a(v)}{u-v}+\bar{h}_{a+1}(-u)\frac{\bar{e}_a(u)-\bar{e}_a(-v)}{u+v},
\label{eq:Dr:ha1ea}\\
&\left[h_{a+1}(u),\bar{e}_a(v)\right]
=h_{a+1}(u)\frac{\bar{e}_a(u)-\bar{e}_a(v)}{u-v}+\bar{h}_{a+1}(-u)\frac{e_a(u)-e_a(-v)}{u+v},
\label{eq:Dr:ha1eta}\\
&\left[\bar{h}_{a+1}(u),e_a(v)\right]
=\bar{h}_{a+1}(u)\frac{e_a(u)-e_a(v)}{u-v}-h_{a+1}(-u)\frac{\bar{e}_a(u)-\bar{e}_a(-v)}{u+v},
\label{eq:Dr:hta1ea}\\
&\left[\bar{h}_{a+1}(u),\bar{e}_a(v)\right]
=\bar{h}_{a+1}(u)\frac{\bar{e}_a(u)-\bar{e}_a(v)}{u-v}-h_{a+1}(-u)\frac{e_a(u)-e_a(-v)}{u+v},
\label{eq:Dr:hta1eta}\\
&\left[h_a(u),f_b(v)\right]
=\left[h_a(u),\bar{f}_b(v)\right]=\left[\bar{h}_a(u),f_b(v)\right]=\left[\bar{h}_a(u),\bar{f}_b(v)\right]=0, 
\qquad\text{ if }a\neq b,b+1, \label{eq:Dr:hafb}\\
&\left[h_a(u),f_a(v)\right]=-\left[\bar{h}_a(u),\bar{f}_a(-v)\right]
=\frac{f_a(u)-f_a(v)}{u-v}h_a(u)
-\frac{\bar{f}_a(-u)-\bar{f}_a(-v)}{u-v}\bar{h}_a(u),
\label{eq:Dr:hafa}\\
&\left[h_a(u),\bar{f}_a(v)\right]
=-\left[\bar{h}_a(u),f_a(-v)\right]
=\frac{\bar{f}_a(u)-\bar{f}_a(v)}{u-v}h_a(u)
+\frac{f_a(-u)-f_a(-v)}{u-v}\bar{h}_a(u),
\label{eq:Dr:hafta}\\
&\left[h_{a+1}(u),f_a(v)\right]
=\frac{f_a(v)-f_a(u)}{u-v}h_{a+1}(u)-\frac{\bar{f}_a(-u)-\bar{f}_a(v)}{u+v}\bar{h}_{a+1}(u),
\label{eq:Dr:ha1fa}\\
&\left[h_{a+1}(u),\bar{f}_a(v)\right]
=\frac{f_a(v)-f_a(u)}{u-v}\bar{h}_{a+1}(u)
-\frac{\bar{f}_a(-u)-\bar{f}_a(v)}{u+v}h_{a+1}(u),
\label{eq:Dr:ha1fta}\\
&\left[\bar{h}_{a+1}(u),f_a(v)\right]
=\frac{\bar{f}_a(v)-\bar{f}_a(u)}{u-v}h_{a+1}(u)
+\frac{f_a(-u)-f_a(v)}{u+v}\bar{h}_{a+1}(u),
\label{eq:Dr:hta1fa}\\
&\left[\bar{h}_{a+1}(u),\bar{f}_a(v)\right]
=\frac{\bar{f}_a(v)-\bar{f}_a(u)}{u-v}\bar{h}_{a+1}(u)+\frac{f_a(-u)-f_a(v)}{u+v}h_{a+1}(u),
\label{eq:Dr:hta1fta}\\
&\left[e_a(u), f_b(v)\right]=\left[e_a(u),\bar{f}_b(v)\right]
=\left[\bar{e}_a(u), f_b(v)\right]=\left[\bar{e}_a(u),\bar{f}_b(v)\right]=0,\qquad\text{ if }a\neq b,
\label{eq:Dr:eafb}\\
&\left[e_a(u),f_a(v)\right]
=\frac{h_a^{\prime}(u)h_{a+1}(u)-h_{a+1}(v)h_a^{\prime}(v)}{u-v}
+\frac{\bar{h}_a^{\prime}(-u)\bar{h}_{a+1}(u)+\bar{h}_{a+1}(-v)\bar{h}_a^{\prime}(v)}{u+v},
\label{eq:Dr:eafa}\\
&\left[e_a(u),\bar{f}_a(v)\right]
=\frac{h_a^{\prime}(u)\bar{h}_{a+1}(u)-\bar{h}_{a+1}(v)h_a^{\prime}(v)}{u-v}
+\frac{\bar{h}_a^{\prime}(-u)h_{a+1}(u)-h_{a+1}(-v)\bar{h}_a^{\prime}(v)}{u+v},
\label{eq:Dr:eafta}\\
&\left[\bar{e}_a(u),f_a(v)\right]
=\frac{\bar{h}_a^{\prime}(u)h_{a+1}(u)-h_{a+1}(v)\bar{h}_a^{\prime}(v)}{u-v}
-\frac{h_a^{\prime}(-u)\bar{h}_{a+1}(u)-\bar{h}_{a+1}(-v)h_a^{\prime}(v)}{u+v},
\label{eq:Dr:etafa}\\
&\left[\bar{e}_a(u),\bar{f}_a(v)\right]
=\frac{\bar{h}_a^{\prime}(u)\bar{h}_{a+1}(u)+\bar{h}_{a+1}(v)\bar{h}_a^{\prime}(v)}{u-v}
-\frac{h_a^{\prime}(-u)h_{a+1}(u)-h_{a+1}(-v)h_a^{\prime}(v)}{u+v},
\label{eq:Dr:etafta}\\
&\left[e_a(u),e_b(v)\right]
=\left[e_a(u),\bar{e}_b(v)\right]=\left[\bar{e}_a(u),e_b(v)\right]=\left[\bar{e}_a(u),\bar{e}_b(v)\right]=0,\qquad \text{ if }|a-b|>1,
\label{eq:Dr:eaeb}\\
&\left[e_a(u), e_a(v)\right]
=\frac{\left(e_a(v)-e_a(u)\right)^2}{u-v}
-\frac{\left(\bar{e}_a(-v)-\bar{e}_a(u)\right)\left(\bar{e}_a(v)-\bar{e}_a(-u)\right)}{u+v},
\label{eq:Dr:eaea}\\
&\left[e_a(u),\bar{e}_a(v)\right]
=\frac{\left(e_a(u)-e_a(v)\right)\left(\bar{e}_a(u)-\bar{e}_a(v)\right)}{u-v}
+\frac{\left(\bar{e}_a(-u)-\bar{e}_a(v)\right)\left(e_a(u)-e_a(-v)\right)}{u+v},
\label{eq:Dr:eaeta}\\
&\left[\bar{e}_a(u),\bar{e}_a(v)\right]
=-\frac{\left(\bar{e}_a(v)-\bar{e}_a(u)\right)^2}{u-v}-\frac{\left(e_a(-v)-e_a(u)\right)\left(e_a(v)-e_a(-u)\right)}{u+v}.
\label{eq:Dr:etaeta}\\
&\left[e_a(u),\ e_{a+1}(v)\right]=\left[\bar{e}_{a}(-u),\ \bar{e}_{a+1}(v)\right],
\label{eq:Dr:eaea1}\\
&\left[e_a(u),\ \bar{e}_{a+1}(v)\right]=-\left[\bar{e}_a(-u),e_{a+1}(v)\right],
\label{eq:Dr:eaeta1}\\
&u\left[\mathring{e}_a(u), e_{a+1}(v)\right]
-v\left[e_a(u), \mathring{e}_{a+1}(v)\right]=e_a(u)e_{a+1}(v)-\bar{e}_a(-u)\bar{e}_{a+1}(v),
\label{eq:Dr:ecircle}\\
&u\left[\mathring{\bar{e}}_a(u), e_{a+1}(v)\right]
-v\left[\bar{e}_a(u), \mathring{e}_{a+1}(v)\right]=\bar{e}_a(u)e_{a+1}(v)+e_a(-u)\bar{e}_{a+1}(v),
\label{eq:Dr:etcircle}\\
&\left[f_a(u),f_b(v)\right]
=\left[f_a(u),\bar{f}_b(v)\right]=\left[\bar{f}_a(u),f_b(v)\right]=\left[\bar{f}_a(u),\bar{f}_b(v)\right]=0, \qquad\text{ if }|a-b|>1,
\label{eq:Dr:fafb}\\
&\left[f_a(u), f_a(v)\right]
=-\frac{\left(f_a(v)-f_a(u)\right)^2}{u-v}
+\frac{\left(\bar{f}_a(-v)-\bar{f}_a(u)\right)\left(\bar{f}_a(v)-\bar{f}_a(-u)\right)}{u+v},
\label{eq:Dr:fafa}\\
&\left[f_a(u),\bar{f}_a(v)\right]
=-\frac{\left(\bar{f}_a(-u)-\bar{f}_a(v)\right)\left(f_a(u)-f_a(-v)\right)}{u+v}
-\frac{\left(f_a(u)-f_a(v)\right)\left(\bar{f}_a(u)-\bar{f}_a(v)\right)}{u-v},
\label{eq:Dr:fafta}\\
&\left[\bar{f}_a(u),\bar{f}_a(v)\right]
=\frac{\left(\bar{f}_a(v)-\bar{f}_a(u)\right)^2}{u-v}+\frac{\left(f_a(-v)-f_a(u)\right)\left(f_a(v)-f_a(u)\right)}{u+v},
\label{eq:Dr:ftafta}\\
&\left[f_{a}(u),f_{a+1}(v)\right]=-\left[\bar{f}_{a}(u),\ \bar{f}_{a+1}(-v)\right],
\label{eq:Dr:fafa1}\\
&\left[f_{a}(u),\ \bar{f}_{a+1}(v)\right]=-\left[\bar{f}_{a}(u),f_{a+1}(-v)\right],
\label{eq:Dr:ftafa1}\\
&u\left[\mathring{f}_a(u), f_{a+1}(v)\right]
-v\left[f_a(u), \mathring{f}_{a+1}(v)\right]=-f_{a+1}(v)f_a(u)+\bar{f}_{a+1}(-v)\bar{f}_a(u),
\label{eq:Dr:fcircle}\\
&u\left[\mathring{\bar{f}}_a(u), f_{a+1}(v)\right]
+v\left[\bar{f}_a(u), \mathring{f}_{a+1}(v)\right]=f_{a+1}(v)\bar{f}_a(u)+\bar{f}_{a+1}(-v)f_a(u),
\label{eq:Dr:ftcirlce}\\
&\left[e_a(u), \left[e_a(u), e_b(v)\right]\right]-\left[\bar{e}_a(-u), \left[\bar{e}_a(u), e_b(v)\right]\right]=0, \qquad
\text{ if }  |a-b|=1,\label{eq:Dr:eserre1}\\
&\left[e_a(-u), \left[e_a(u), e_b(v)\right]\right]+\left[\bar{e}_a(u), \left[\bar{e}_a(u), e_b(v)\right]\right]=0,\qquad
\text{ if }  |a-b|=1,\label{eq:Dr:eserre2}\\
&\left[e_a(-u), \left[\bar{e}_a(u), e_b(v)\right]\right]+\left[\bar{e}_a(u), \left[e_a(u), e_b(v)\right]\right]=0,\qquad
\text{ if }  |a-b|=1,\label{eq:Dr:eserre3}\\
&\left[e_a(u), \left[\bar{e}_a(u), e_b(v)\right]\right]-\left[\bar{e}_a(-u),\left[e_a(u), e_b(v)\right]\right]=0,\qquad
\text{ if }  |a-b|=1,\label{eq:Dr:eserre4}\\
&\left[f_a(u), \left[f_a(u), f_b(v)\right]\right]+\left[\bar{f}_a(u),\left[\bar{f}_a(-u), f_b(v)\right]\right]=0,\qquad
\text{ if }  |a-b|=1,\label{eq:Dr:fserre1}\\
&\left[f_a(u),\left[f_a(-u), f_b(v)\right]\right]-\left[\bar{f}_a(u), \left[\bar{f}_a(u), f_b(v)\right]\right]=0,\qquad
\text{ if }  |a-b|=1,\label{eq:Dr:fserre2}\\
&\left[f_a(u),\left[\bar{f}_a(u), f_b(v)\right]\right]+\left[\bar{f}_a(u), \left[f_a(-u), f_b(v)\right]\right]=0,\qquad
\text{ if }  |a-b|=1,\label{eq:Dr:fserre3}\\
&\left[f_a(u), \left[\bar{f}_a(-u), f_b(v)\right]\right]-\left[\bar{f}_a(u), \left[f_a(u), f_b(v)\right]\right]=0,
\qquad\text{ if }  |a-b|=1.\label{eq:Dr:fserre4}
\end{align}
\end{theorem}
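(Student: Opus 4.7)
The plan is to follow the standard Brundan--Kleshchev scheme adapted to the queer setting. Let $\widetilde{\mathrm{Y}}(\mathfrak{q}_n)$ be the abstract superalgebra defined by the generators $h_a^{(r)},\bar{h}_a^{(r)},e_b^{(r)},\bar{e}_b^{(r)},f_b^{(r)},\bar{f}_b^{(r)}$ and the listed relations \eqref{eq:Dr:hahb}--\eqref{eq:Dr:fserre4}. I will construct a homomorphism $\Phi:\widetilde{\mathrm{Y}}(\mathfrak{q}_n)\to\mathrm{Y}(\mathfrak{q}_n)$ sending each abstract generator to the matching coefficient in the block Gauss decomposition \eqref{eq:Gaussfac}, then show $\Phi$ is an isomorphism. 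Surjectivity of $\Phi$ is already granted by Theorem \ref{thm:generator}, so only well-definedness and injectivity need to be addressed.

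For well-definedness, the relations are to be read off from the work already done in Sections \ref{se:Embedding}--\ref{se:serrerelation}. The $h$-$h$ relations \eqref{eq:Dr:haha}--\eqref{eq:Dr:htahta} come from extracting matrix entries of \eqref{q2:eq:HaHa}; long-range commutativities \eqref{eq:Dr:hahb}, \eqref{eq:Dr:haeb}, \eqref{eq:Dr:hafb}, \eqref{eq:Dr:eafb} follow from Lemma \ref{lem:com} together with the Gauss-decomposition identities \eqref{re:TaaHFHE}--\eqref{re:TbaHFHE}. Every relation that only involves consecutive indices $a, a+1$ (the remaining $h$-$e$, $h$-$f$ and $e$-$f$ relations) can be reduced via the embeddings $\varphi_m$ and $\psi_m$ of Theorem \ref{thm:embedding}, and the transfer identities \eqref{eq:qnpsi1}--\eqref{eq:qnpsi2}, to the $\Y(\mathfrak{q}_2)$ calculations of Section \ref{se:q2relation}. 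The $e$-$e$ relations \eqref{eq:Dr:eaea}--\eqref{eq:Dr:etaeta} are the matrix-entry form of Lemma \ref{lem:E11E12}; the mixed-index identities \eqref{eq:Dr:eaea1}--\eqref{eq:Dr:etcircle} follow from Lemma \ref{lem:E1E2}; the simplified Serre relations \eqref{eq:Dr:eserre1}--\eqref{eq:Dr:eserre4} are read off from the coefficient of $u^{-1}$ in \eqref{eq:q3:Serre}. All $f$-side relations \eqref{eq:Dr:fafb}--\eqref{eq:Dr:fserre4} are obtained from their $e$-counterparts by applying the anti-involution $\omega$ of Proposition \ref{antiinvolutionomega}.

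Injectivity is where the real work lies. I would equip $\widetilde{\mathrm{Y}}(\mathfrak{q}_n)$ with the loop filtration in which every generator with superscript $r$ sits in filtered degree $r-1$, and inspect each defining relation to confirm it is compatible with this filtration (in most cases the right-hand side drops a degree, turning the associative product into a Lie bracket in the graded algebra). The top-degree parts of the relations should then be precisely the defining commutators of $\mathrm{U}(\widehat{\mathfrak{q}}_n^{tw})$ after identifying the images of the Drinfeld generators with the appropriate root-type elements $\mathsf{g}_{ij}^{(r)}$ via the classical Gauss decomposition. This yields a surjection $\mathrm{U}(\widehat{\mathfrak{q}}_n^{tw})\twoheadrightarrow\mathrm{gr}\,\widetilde{\mathrm{Y}}(\mathfrak{q}_n)$. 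Composing with $\mathrm{gr}\,\Phi$ and with the isomorphism $\mathrm{gr}'\mathrm{Y}(\mathfrak{q}_n)\cong\mathrm{U}(\widehat{\mathfrak{q}}_n^{tw})$ from Theorem \ref{thm:Iso} produces the identity on $\mathrm{U}(\widehat{\mathfrak{q}}_n^{tw})$, which forces both arrows to be isomorphisms; hence $\mathrm{gr}\,\Phi$ is an isomorphism and so is $\Phi$.

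The main obstacle is the filtration check in the injectivity step: one must match every abstract Drinfeld generator with a specific element of $\widehat{\mathfrak{q}}_n^{tw}$ (a nontrivial bookkeeping task because of the non-abelian Cartan subalgebra of $\mathfrak{q}_n$ and the odd generators $\bar h_a^{(r)}, \bar e_b^{(r)}, \bar f_b^{(r)}$), and then check degree by degree that the top-degree piece of each of \eqref{eq:Dr:hahb}--\eqref{eq:Dr:fserre4} reproduces exactly the corresponding commutator of $\mathrm{U}(\widehat{\mathfrak{q}}_n^{tw})$; the simplified Serre relations, in particular, must suffice to reach the classical Serre relations of the current algebra, which is the crucial reason why the full relations \eqref{eq:q3:Serre} need not be imposed.
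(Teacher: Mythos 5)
Your proposal follows essentially the same route as the paper: a canonical surjection from the abstractly presented algebra onto $\Y(\mathfrak{q}_n)$ (with surjectivity from Theorem~\ref{thm:generator} and well-definedness from the computations of Sections~\ref{se:q2relation}--\ref{se:serrerelation}), followed by an injectivity argument via the loop filtration, a surjection from $\U(\widehat{\mathfrak{q}}_n^{tw})$ onto the associated graded algebra built from a triangular decomposition, and the composition with the isomorphism of Theorem~\ref{thm:Iso} being the identity. You correctly flag that the substantive work is the degree-by-degree matching of graded relations with the commutators of $\widehat{\mathfrak{q}}_n^{tw}$ (carried out in the paper's Lemmas~\ref{lem:grYD} and~\ref{lem:grYD1}), including the verification that the simplified Serre relations suffice.
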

\begin{proof}
Let $\YD(\mathfrak{q}_n)$  be the associative superalgebra defined by the generators and relations as stated in the theorem, while $\Y(\mathfrak{q}_n)$ denotes the R-matrix presented super-Yangian as before. Theorem~\ref{thm:generator} demonstrates that $\Y(\mathfrak{q}_n)$ is generated by the coefficients of $h_a(u), e_b(u), f_b(u)$ and $\bar{h}_a(u), \bar{e}_b(u), \bar{f}_b(u)$ for $a=1,2,\ldots, n$ and $b=1,2,\ldots, n-1$, which are obtained by the Gauss decomposition~\eqref{eq:Gaussfac} of the generator matrix $T(u)$ in \eqref{eq:qnTu}. We have verified in Sections~\ref{se:q2relation} and ~\ref{se:serrerelation} that these generators of $\Y(\mathfrak{q}_n)$ satisfy all relations \eqref{eq:Dr:hahb}-\eqref{eq:Dr:fserre4}. Hence, there exists a canonical surjective homomorphism $\theta:\YD(\mathfrak{q}_n)\rightarrow\Y(\mathfrak{q}_n)$ that sends the generators of $\YD(\mathfrak{q}_n)$ to the corresponding elements in $\Y(\mathfrak{q}_n)$. 

Next, we show $\theta$ is an isomorphism. We introduce a loop filtration on $\YD(\mathfrak{q}_n)$ by declaring the generators $h_a^{(r)}, e_b^{(r)}, f_b^{(r)}, \bar{h}_a^{(r)}, \bar{e}_b^{(r)}, \bar{f}_b^{(r)}$ to be of degree $r-1$. Let $\mathrm{L}_r\YD(\mathfrak{q}_n)$ (resp. $\mathrm{L}_r\Y(\mathfrak{q}_n)$) be the subspace of  $\YD(\mathfrak{q}_n)$ (resp. $\Y(\mathfrak{q}_n)$) spanned by elements of degree at most $r$. It is easy to observe that $$\theta\left(\mathrm{L}_r\YD(\mathfrak{q}_n)\right)\subseteq\mathrm{L}_r\Y(\mathfrak{q}_n)$$ 
for all $r\geqslant0$. Hence, $\theta$ induces a surjective homomorphism of the corresponding graded algebras:
$$\mathrm{gr}^{\prime}\theta:\mathrm{gr}^{\prime}\YD(\mathfrak{q}_n)\rightarrow
\mathrm{gr}^{\prime}\Y(\mathfrak{q}_n),$$
such that the following diagram commutes:
$$\xymatrix{
\YD(\mathfrak{q}_n)
\ar[r]^{\theta}\ar[d]_{\widehat{\pi}}&
\Y(\mathfrak{q}_n)
\ar[d]^{\pi}\\
\mathrm{gr}^{\prime}\YD(\mathfrak{q}_n)
\ar[r]_{\mathrm{gr}^{\prime}\theta}&
\mathrm{gr}^{\prime}\Y(\mathfrak{q}_n)
}$$
where $\widehat{\pi}$ and $\pi$ are the canonical surjective homomorphisms.

It suffices to show that $\mathrm{gr}^{\prime}\theta$ is an isomorphism. Since there is an isomorphism $\psi: \mathrm{gr}^{\prime}\Y(\mathfrak{q}_n)\rightarrow \U(\mathfrak{q}_n^{tw})$ as shown in Theorem~\ref{thm:Iso}, we shall find a surjective homomorphism $\widehat{\psi}:\U(\mathfrak{q}_n^{tw})\rightarrow\mathrm{gr}^{\prime}\YD(\mathfrak{q}_n)$ such that the composition
$$\U(\mathfrak{q}_n^{tw})\xrightarrow{\widehat{\psi}}
\mathrm{gr}^{\prime}\YD(\mathfrak{q}_n)\xrightarrow{\mathrm{gr}^{\prime}\theta}
\mathrm{gr}^{\prime}\Y(\mathfrak{q}_n)\xrightarrow{\psi}
\U(\mathfrak{q}_n^{tw})$$
is the identity map, which ensures that $\mathrm{gr}^{\prime}\theta$ is an isomorphism.

Let $\YD^0(\mathfrak{q}_n)$ be the subalgebra of $\YD(\mathfrak{q}_n)$ generated by $h_a^{(r)}, \bar{h}_a^{(r)}$ for $a=1,2,\ldots,n$ and $r\geqslant1$.  Let $\YD^+(\mathfrak{q}_n)$ (resp. $\YD^-(\mathfrak{q}_n)$) denote the subalgebras of $\YD(\mathfrak{q}_n)$ generated by $e_b^{(r)}, \bar{e}_b^{(r)}$ (resp. $f_b^{(r)}, \bar{f}_b^{(r)}$) for $b=1,2,\ldots,n-1$ and $r\geqslant1$. Similarly, we denote by $\U^0(\mathfrak{q}_n^{tw})$, $\U^+(\mathfrak{q}_n^{tw})$ and $\U^-(\mathfrak{q}_n^{tw})$ the subalgebras of $\U(\mathfrak{q}_n^{tw})$ generated by the entries of $\mathsf{G}_{ab}^{(r)}$ with $a=b$, $a<b$, and $a>b$, respectively. Then the PBW theorem for $\U(\mathfrak{q}_n^{tw})$ implies that the multiplication map
$$\U^-(\mathfrak{q}_n^{tw})\otimes \U^0(\mathfrak{q}_n^{tw})\otimes \U^+(\mathfrak{q}_n^{tw})\longrightarrow \U(\mathfrak{q}_n^{tw})$$
is an isomorphism. On the other hand, the relations \eqref{eq:Dr:haeb}-\eqref{eq:Dr:etafta} shows that the multiplication map
$$\YD^-(\mathfrak{q}_n)\otimes\YD^0(\mathfrak{q}_n)\otimes\YD^+(\mathfrak{q}_n)\longrightarrow\YD(\mathfrak{q}_n)$$
is surjective. Then
$$\mathrm{gr}^{\prime}\YD^-(\mathfrak{q}_n)\otimes\mathrm{gr}^{\prime}\YD^0(\mathfrak{q}_n)\otimes\mathrm{gr}^{\prime}\YD^+(\mathfrak{q}_n)\longrightarrow\mathrm{gr}^{\prime}\YD(\mathfrak{q}_n)$$
is also surjective.

We first consider the subalgebra $\mathrm{gr}^{\prime}\YD^0(\mathfrak{q}_n)$. 
Comparing the coefficients of $u^{-r}v^{-s}$ in both sides of \eqref{eq:Dr:haha}, we obtain
\begin{align*}
&\left[h_a^{(1)}, h_a^{(s)}\right]=0,\\
&\left[h_a^{(2)}, h_a^{(s)}\right]=\bar{h}_a^{(1)}\bar{h}_a^{(s)}+(-1)^s\bar{h}_a^{(s)}\bar{h}_a^{(1)},\\
&\left[h_a^{(r+2)},h_a^{(s)}\right]
-\left[h_a^{(r)},h_a^{(s+2)}\right]
-\left[h_a^{(r+1)},h_a^{(s)}\right]
-\left[h_a^{(r)},h_a^{(s+1)}\right]\\
=&\left(\bar{h}_a^{(r+1)}\bar{h}_a^{(s)}+(-1)^{r+s}\bar{h}_a^{(r)}\bar{h}_a^{(s+1)}\right)-\left(\bar{h}_a^{(r)}\bar{h}_a^{(s+1)}+(-1)^{r+s}\bar{h}_a^{(s+1)}\bar{h}_a^{(r)}\right),
\end{align*}
which hold in $\YD^0(\mathfrak{q}_n)$. They yield the following identities in $\mathrm{gr}^{\prime}\YD^0(\mathfrak{q}_n)$:
\begin{equation}
\left[\widehat{\pi}\left(h_a^{(r+1)}\right), \widehat{\pi}\left(h_a^{(s+1)}\right)\right]=0,\quad r,s\geqslant0. \label{eq:grhat:hh}
\end{equation}

Comparing the coefficients of $u^{-r}v^{-s}$ on both sides of \eqref{eq:Dr:hahta}, we obtain
\begin{align*}
&\left[h_a^{(1)}, \bar{h}_a^{(s)}\right]=0,\\
&\left[h_a^{(2)}, \bar{h}_a^{(s)}\right]=-2\bar{h}_a^{(s+1)}
+h_a^{(1)}\bar{h}_a^{(s)}-(-1)^sh_a^{(s)}\bar{h}_a^{(1)}+\bar{h}_a^{(1)}h_a^{(s)}-\bar{h}_a^{(s)}h_a^{(1)},\\
&\left[h_a^{(r+2)},\bar{h}_a^{(s)}\right]
-\left[h_a^{(r)},\bar{h}_a^{(s+2)}\right]=h_a^{(r+1)}\bar{h}_a^{(s)}-h_a^{(r)}\bar{h}_a^{(s+1)}+(-1)^{r+s}\left(h_a^{(s+1)}\bar{h}_a^{(r)}-h_a^{(s)}\bar{h}_a^{(r+1)}\right)\\
&\qquad\qquad\qquad\quad\qquad\qquad\quad\quad\quad +\bar{h}_a^{(r+1)}h_a^{(s)}-\bar{h}_a^{(s)}h_a^{(r+1)}+\bar{h}_a^{(r)}h_a^{(s+1)}-\bar{h}_a^{(s+1)}h_a^{(r)},
\end{align*}
which hold in $\YD^0(\mathfrak{q}_n)$. They yield the following identities in $\mathrm{gr}^{\prime}\YD^0(\mathfrak{q}_n)$:
\begin{equation}
\left[\widehat{\pi}\left(h_a^{(r+1)}\right), \widehat{\pi}\left(\bar{h}_a^{(s+1)}\right)\right]=\left((-1)^r-1\right)\widehat{\pi}\left(\bar{h}_a^{(r+s+1)}\right),\quad r,s\geqslant0. \label{eq:grhat:hht}
\end{equation}
A similar consideration on \eqref{eq:Dr:htahta} yields that
\begin{equation}
\left[\widehat{\pi}\left(\bar{h}_a^{(r+1)}\right), \widehat{\pi}\left(\bar{h}_a^{(s+1)}\right)\right]=\left((-1)^r+(-1)^s\right)\widehat{\pi}\left(h_a^{(r+s+1)}\right),
\quad r,s\geqslant0.
\label{eq:grhat:htht}
\end{equation}
According to the relations \eqref{eq:grhat:hh},~\eqref{eq:grhat:hht} and ~\eqref{eq:grhat:htht}, there is a surjective homomorphism $\widehat{\psi}^0: \U^0(\mathfrak{q}_n^{tw})\rightarrow\mathrm{gr}^{\prime}\YD^0(\mathfrak{q}_n)$ such that
$$\widehat{\psi}^0\left(\mathsf{g}_{a,a}^{(r)}\right)=\widehat{\pi}\left(h_a^{(r+1)}\right),\quad \widehat{\psi}^0\left(\mathsf{g}_{-a,a}^{(r)}\right)=\widehat{\pi}\left(\bar{h}_a^{(r+1)}\right),\quad a=1,2,\ldots,n.$$

Next, we consider the subalgebra $\mathrm{gr}^{\prime}\YD^+(\mathfrak{q}_n)$. In order to find a homomorphism from $\U^+\left(\mathfrak{q}_n^{tw}\right)$ to $\mathrm{gr}^{\prime}\YD(\mathfrak{q}_n)$, we inductively define the following elements in $\YD(\mathfrak{q}_n)$: 
\begin{align*}
e_{a,a+1}^{(r)}:=&e_a^{(r)},
&e_{a,b}^{(r)}:=&\left[e_{a,b-1}^{(r)},e_{b-1}^{(1)}\right], \text{ if }b>a+1,\\
\bar{e}_{a,a+1}^{(r)}:=&\bar{e}_a^{(r)},
&\bar{e}_{a,b}^{(r)}:=&\left[\bar{e}_{a,b-1}^{(r)},e_{b-1}^{(1)}\right], \text{ if }b>a+1,
\end{align*}
for $r\geqslant1$. Based on the relations~\eqref{eq:Dr:eaeb}-\eqref{eq:Dr:eserre4}, it can be verified that 
\begin{align}
\left[\widehat{\pi}\left(e_{ab}^{(r+1)}\right),
\widehat{\pi}\left(e_{cd}^{(s+1)}\right)\right]
=&\delta_{bc}\widehat{\pi}\left(e_{ad}^{(r+s+1)}\right)
-\delta_{ad}\widehat{\pi}\left(e_{cb}^{(r+s+1)}\right),
\label{eq:grhat:ee}\\
\left[\widehat{\pi}\left(e_{ab}^{(r+1)}\right),
\widehat{\pi}\left(\bar{e}_{cd}^{(s+1)}\right)\right]
=&\delta_{bc}(-1)^r\widehat{\pi}\left(\bar{e}_{ad}^{(r+s+1)}\right)
-\delta_{ad}\widehat{\pi}\left(\bar{e}_{cb}^{(r+s+1)}\right),
\label{eq:grhat:eet}\\
\left[\widehat{\pi}\left(\bar{e}_{ab}^{(r+1)}\right),
\widehat{\pi}\left(\bar{e}_{cd}^{(s+1)}\right)\right]
=&\delta_{bc}(-1)^r\widehat{\pi}\left(e_{ad}^{(r+s+1)}\right)
-\delta_{ad}(-1)^s\widehat{\pi}\left(e_{cb}^{(r+s+1)}\right),
\label{eq:grhat:etet}
\end{align}
for $1\leqslant a<b\leqslant n$, $1\leqslant c<d\leqslant n$ and $r,s\geqslant0$. The details for verifying \eqref{eq:grhat:ee} and \eqref{eq:grhat:eet} are given in Lemmas~\ref{lem:grYD} and Lemmas~\ref{lem:grYD1} below. Then \eqref{eq:grhat:etet} follows from \eqref{eq:Dr:eaea1} and \eqref{eq:grhat:ee}. Hence, there is a surjective homomorphism $\widehat{\psi}^+: \U^+(\mathfrak{q}_n^{tw})\rightarrow\mathrm{gr}^{\prime}\YD^+(\mathfrak{q}_n)$ such that 
$$\widehat{\psi}^+\left({\mathsf{g}_{ab}^{(r)}}\right)=\widehat{\pi}\left(e_{ab}^{(r+1)}\right), 
\text{ and }\widehat{\psi}^+\left({\mathsf{g}_{-a,b}^{(r)}}\right)=\widehat{\pi}\left(\bar{e}_{ab}^{(r+1)}\right),\quad 
1\leqslant a<b\leqslant n.$$
By Proposition \ref{antiinvolutionomega}, we know that there is a surjective homomorphism $\widehat{\psi}^-: \U^-(\mathfrak{q}_n^{tw})\rightarrow\mathrm{gr}^{\prime}\YD^-(\mathfrak{q}_n)$ such that 
$$\widehat{\psi}^-\left({\mathsf{g}_{b+1,b}^{(r)}}\right)=\widehat{\pi}\left(f_b^{(r+1)}\right), 
\text{ and }\widehat{\psi}^-\left({\mathsf{g}_{-b-1,b}^{(r)}}\right)=\widehat{\pi}\left(\bar{f}_b^{(r+1)}\right),\quad 
b=1,2,\ldots,n-1.$$

Combining $\widehat{\psi}^0, \widehat{\psi}^+, \widehat{\psi}^-$, we obtain a surjective homomorphism $\widehat{\psi}: \U(\mathfrak{q}_n^{tw})\rightarrow\mathrm{gr}^{\prime}\YD(\mathfrak{q}_n)$ such that $\psi\circ\mathrm{gr}^{\prime}\circ\widehat{\psi}$ is the identity map on $\U(\mathfrak{q}^{tw})$. This completes the proof.
\end{proof}

\begin{lemma}
\label{lem:grYD}
The identity \eqref{eq:grhat:ee} holds in $\mathrm{gr}^{\prime}\YD(\mathfrak{q}_n)$.
\end{lemma}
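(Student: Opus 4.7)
The plan is to proceed by induction on $(b-a)+(d-c)\geqslant 2$, with the base case $b=a+1$, $d=c+1$ requiring separate analysis of $[\widehat{\pi}(e_a^{(r+1)}), \widehat{\pi}(e_c^{(s+1)})]$ in $\mathrm{gr}_{r+s}^{\prime}\YD(\mathfrak{q}_n)$ according to the value of $|a-c|$, while the inductive step reduces via the recursive definition $e_{a,b}^{(r+1)}=[e_{a,b-1}^{(r+1)}, e_{b-1}^{(1)}]$ together with the super-Jacobi identity.

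\textbf{Base case.} When $|a-c|>1$, relation \eqref{eq:Dr:eaeb} gives $[e_a^{(r+1)}, e_c^{(s+1)}]=0$, matching the vanishing of both Kronecker deltas. When $a=c$ (so both $\delta_{bc}$ and $\delta_{ad}$ vanish), I would extract the coefficient of $u^{-r-1}v^{-s-1}$ from \eqref{eq:Dr:eaea}: every monomial on the right is a product $e_a^{(p)}e_a^{(q)}$ or $\bar{e}_a^{(p)}\bar{e}_a^{(q)}$ with $p+q=r+s+1$, hence of loop-filtration degree $p+q-2=r+s-1$. Therefore $[e_a^{(r+1)}, e_a^{(s+1)}]\in\mathrm{L}_{r+s-1}\YD(\mathfrak{q}_n)$, and its image in $\mathrm{gr}_{r+s}^{\prime}$ vanishes. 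When $c=a+1$ (so $\delta_{bc}=1$, $\delta_{ad}=0$, $d=a+2$), the coefficient of $u^{-r}v^{-s}$ in \eqref{eq:Dr:ecircle} yields
\[
\bigl[e_a^{(r+1)}, e_{a+1}^{(s)}\bigr] - \bigl[e_a^{(r)}, e_{a+1}^{(s+1)}\bigr] = e_a^{(r)}e_{a+1}^{(s)} - (-1)^r \bar{e}_a^{(r)}\bar{e}_{a+1}^{(s)},
\]
whose right side lies in $\mathrm{L}_{r+s-2}\YD(\mathfrak{q}_n)$. Passing to $\mathrm{gr}_{r+s-1}^{\prime}$ gives the shift identity $[\widehat{\pi}(e_a^{(r+1)}), \widehat{\pi}(e_{a+1}^{(s)})] = [\widehat{\pi}(e_a^{(r)}), \widehat{\pi}(e_{a+1}^{(s+1)})]$. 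Iterating this $s$ times to move the pair $(r+1, s+1)$ to $(r+s+1, 1)$, and invoking the definition $e_{a, a+2}^{(r+s+1)} = [e_a^{(r+s+1)}, e_{a+1}^{(1)}]$, produces $[\widehat{\pi}(e_a^{(r+1)}), \widehat{\pi}(e_{a+1}^{(s+1)})] = \widehat{\pi}(e_{a, a+2}^{(r+s+1)})$ in $\mathrm{gr}_{r+s}^{\prime}\YD(\mathfrak{q}_n)$. The case $c=a-1$ follows by antisymmetry.

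\textbf{Inductive step.} For $b>a+1$, super-Jacobi applied to the even elements $e_{a,b-1}^{(r+1)}$ and $e_{b-1}^{(1)}$ gives
\[
\bigl[\widehat{\pi}(e_{ab}^{(r+1)}), \widehat{\pi}(e_{cd}^{(s+1)})\bigr] = \bigl[\widehat{\pi}(e_{a, b-1}^{(r+1)}), \bigl[\widehat{\pi}(e_{b-1}^{(1)}), \widehat{\pi}(e_{cd}^{(s+1)})\bigr]\bigr] - \bigl[\widehat{\pi}(e_{b-1}^{(1)}), \bigl[\widehat{\pi}(e_{a, b-1}^{(r+1)}), \widehat{\pi}(e_{cd}^{(s+1)})\bigr]\bigr].
\]
Each inner commutator has strictly smaller $(b'-a')+(d'-c')$, so the inductive hypothesis applies. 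The ensuing Kronecker-delta bookkeeping, using $a<b$ and $c<d$ to rule out spurious diagonal terms with equal indices, collapses to $\delta_{bc}\widehat{\pi}(e_{ad}^{(r+s+1)}) - \delta_{ad}\widehat{\pi}(e_{cb}^{(r+s+1)})$. The case $d>c+1$ is handled analogously using $e_{cd}^{(s+1)} = [e_{c, d-1}^{(s+1)}, e_{d-1}^{(1)}]$.

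\textbf{Main obstacle.} The most delicate step is the base case $|a-c|=1$: the iterated shift must produce $\widehat{\pi}(e_{a, a+2}^{(r+s+1)})$ exactly, with no lower-order correction. This relies crucially on the right side of \eqref{eq:Dr:ecircle} having strictly smaller loop-filtration degree than the left side, which the loop filtration guarantees. The argument parallels the corresponding computation for $\mathrm{Y}(\mathfrak{gl}_n)$ in \cite{BK05}; the novelty is that the odd generators $\bar{e}_a, \bar{e}_{a+1}$ appear only in lower-filtration correction terms and do not influence the top-degree structure constants of $\mathrm{gr}^{\prime}\YD(\mathfrak{q}_n)$.
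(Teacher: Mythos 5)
Your overall strategy (reduce to low rank via the recursive definition $e_{ab}^{(r)}=[e_{a,b-1}^{(r)},e_{b-1}^{(1)}]$, Jacobi identities, and the base relations \eqref{eq:Dr:eaeb}, \eqref{eq:Dr:eaea}, \eqref{eq:Dr:ecircle}) is the same as the paper's, and your base cases are handled correctly. However, there is a genuine gap in the inductive step: the claim that ``the ensuing Kronecker-delta bookkeeping \ldots collapses'' is false as stated, because in the configurations where both deltas vanish but the index intervals overlap, the Jacobi identity does not reduce the quantity to instances of the inductive hypothesis with strictly smaller complexity --- it merely reshuffles it. Concretely, take $a<c$, $b=d=c+1$. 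Writing $e_{a,c+1}^{(r+1)}=[e_{ac}^{(r+1)},e_c^{(1)}]$ and applying Jacobi together with the inductive hypothesis gives
\begin{equation*}
\left[\widehat{\pi}\left(e_{a,c+1}^{(r+1)}\right),\widehat{\pi}\left(e_c^{(s+1)}\right)\right]
=\left[\widehat{\pi}\left(e_{a,c+1}^{(r+s+1)}\right),\widehat{\pi}\left(e_c^{(1)}\right)\right],
\end{equation*}
which is the same statement with $(r+s,0)$ in place of $(r,s)$; your induction parameter $(b-a)+(d-c)$ has not decreased and the process terminates in a tautology. The same happens for $a=c$, $d=b+1=a+2$, where one is led to $\left[\widehat{\pi}(e_a^{(\cdot)}),\left[\widehat{\pi}(e_a^{(\cdot)}),\widehat{\pi}(e_{a+1}^{(\cdot)})\right]\right]$, i.e.\ to $\mathrm{ad}\big(\widehat{\pi}(e_a)\big)^2\widehat{\pi}(e_{a+1})$-type expressions.

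These residual terms do not vanish as a consequence of the quadratic relations alone: killing them is exactly the role of the Serre relations \eqref{eq:Dr:eserre1}--\eqref{eq:Dr:eserre4}, which your argument never invokes. (Without them, the subalgebra generated by the $\widehat{\pi}(e_b^{(r)})$, $\widehat{\pi}(\bar{e}_b^{(r)})$ would be strictly larger than $\U^+(\mathfrak{q}_n^{tw})$, so no purely formal induction can succeed.) The paper's proof isolates precisely these cases (its Case~3, with four subcases) and derives $\left[\left[e_{c-1}^{(r+1)},e_c^{(1)}\right],e_c^{(1)}\right]=0$ from the coefficients of \eqref{eq:Dr:eserre1} and \eqref{eq:Dr:eserre2}, then bootstraps the remaining overlapping-interval configurations from that. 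To repair your proof you would need to add, as an extra family of base cases for your induction, the vanishing of $\left[\widehat{\pi}(e_{ab}^{(r+1)}),\widehat{\pi}(e_c^{(s+1)})\right]$ for $a\leqslant c<b$, established from the Serre relations; the rest of your inductive bookkeeping is then sound.
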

\begin{proof}
Since the bracket is antisymmetric, we may assume that $a\leqslant c$. Then it suffices to verify \eqref{eq:grhat:ee} in the following eight cases.

\medskip

{\bf Case 1:} $a<b<c<d$. The identity
$\left[\widehat{\pi}\left(e_{ab}^{(r+1)}\right), 
\widehat{\pi}\left(e_{cd}^{(s+1)}\right)\right]
=0$ in $\mathrm{gr}^{\prime}\YD(\mathfrak{q}_n)$ follows from the identity  $\left[e_{ab}^{(r+1)}, e_{cd}^{(s+1)}\right]=0$ by \eqref{eq:Dr:eaeb}.

\medskip

{\bf Case 2:} $a<b=c<d$. If $a=b-1<b=c<b+1=d$, we compare the coefficients of $u^{-r}v^{-s}$ in \eqref{eq:Dr:ecircle} and obtain the following relation in $\YD(\mathfrak{q}_n)$:
$$\left[e_{b-1}^{(r+1)},e_b^{(s)}\right]
-\left[e_{b-1}^{(r)},e_b^{(s+1)}\right]
=e_{b-1}^{(r)}e_b^{(s)}-(-1)^r\bar{e}_{b-1}^{(r)}\bar{e}_b^{(s)}.$$
It yields that
$$\left[\widehat{\pi}\left(e_{b-1}^{(r+1)}\right),\widehat{\pi}\left(e_b^{(s+1)}\right)\right]
=\left[\widehat{\pi}\left(e_{b-1}^{(r+2)}\right),\widehat{\pi}\left(e_b^{(s)}\right)\right]
=\left[\widehat{\pi}\left(e_{b-1}^{(r+s+1)}\right),\widehat{\pi}\left(e_b^{(1)}\right)\right]
=\widehat{\pi}\left(e_{b-1,b+1}^{(r+s+1)}\right).$$

If $a<a+1<b=c<b+1=d$. By Case 1, we compute that
\begin{align*}
\left[\widehat{\pi}\left(e_{ab}^{(r+t+1)}\right),
\widehat{\pi}\left(e_{b}^{(s+1)}\right)\right]
=&\left[\left[\widehat{\pi}\left(e_{a,b-1}^{(r+1)}\right),
\widehat{\pi}\left(e_{b-1}^{(t+1)}\right)\right],
\widehat{\pi}\left(e_b^{(s+1)}\right)\right]\\
=&\left[\widehat{\pi}\left(e_{a,b-1}^{(r+1)}\right),\left[
\widehat{\pi}\left(e_{b-1}^{(t+1)}\right),
\widehat{\pi}\left(e_b^{(s+1)}\right)\right]\right]\\
=&\left[\widehat{\pi}\left(e_{a,b-1}^{(r+1)}\right),
\widehat{\pi}\left(e_{b-1,b+1}^{(s+t+1)}\right)\right].
\end{align*}
It follows that
\begin{align*}
\left[\widehat{\pi}\left(e_{ab}^{(r+1)}\right),
\widehat{\pi}\left(e_{b}^{(s+1)}\right)\right]
=&\left[\widehat{\pi}\left(e_{a,b-1}^{(r+1)}\right),
\widehat{\pi}\left(e_{b-1,b+1}^{(s+1)}\right)\right]\\
=&\left[\widehat{\pi}\left(e_{ab}^{(r+s+1)}\right),
\widehat{\pi}\left(e_{b}^{(1)}\right)\right]
=\widehat{\pi}\left(e_{a,b+1}^{(r+s+1)}\right).
\end{align*}

In general, if $b+1<d$, we conclude by induction on $d-b$ that
\begin{align*}
\left[\widehat{\pi}\left(e_{ab}^{(r+1)}\right),
\widehat{\pi}\left(e_{bd}^{(s+1)}\right)\right]
=\widehat{\pi}\left(e_{ad}^{(r+s+1)}\right).
\end{align*}

\medskip

{\bf Case 3: } $d=c+1$ and $c<b$. We have to show that
\begin{equation}\label{eq:inj:eabec}
\left[\widehat{\pi}\left(e_{ab}^{(r+1)}\right),\widehat{\pi}\left(e_c^{(s+1)}\right)\right]
=0,
\end{equation}
provided that $a\leqslant c$ and $c<b$. We consider the following subcases. 

Subcase 3-1: $a=c$ and $b=c+1$. In this situation, we have
\begin{equation}
\left[\widehat{\pi}\left(e_{c}^{(r+1)}\right),
\widehat{\pi}\left(e_{c}^{(s+1)}\right)\right]=0,
\label{eq:inj:ecec}
\end{equation}
by comparing the coefficients of $u^{-r}v^{-s}$ on both sides of \eqref{eq:Dr:eaea}.

Subcase 3-2: $a<c$ and $b=c+1$. By comparing the coefficients of $u^{-2}v^{-r-1}$ on both sides of the Serre relations \eqref{eq:Dr:eserre1} and \eqref{eq:Dr:eserre2}, we obtain
\begin{align*}
&\left[\left[e_{c-1}^{(r+1)}, e_c^{(1)}\right], e_c^{(1)}\right]-\left[\left[e_{c-1}^{(r+1)}, \bar{e}_c^{(1)}\right], \bar{e}_c^{(1)}\right]=0,\\
&\left[\left[e_{c-1}^{(r+1)}, e_c^{(1)}\right], e_c^{(1)}\right]+\left[\left[e_{c-1}^{(r+1)}, \bar{e}_c^{(1)}\right], \bar{e}_c^{(1)}\right]=0,
\end{align*}
which yields that $\left[\left[e_{c-1}^{(r+1)}, e_c^{(1)}\right], e_c^{(1)}\right]=0$. Hence,
$$\left[\widehat{\pi}\left(e_{c-1,c+1}^{(r+1)}\right),\widehat{\pi}\left(e_c^{(1)}\right)\right]=\left[\left[\widehat{\pi}\left(e_{c-1}^{(r+1)}\right), \widehat{\pi}\left(e_c^{(1)}\right)\right], \widehat{\pi}\left(e_c^{(1)}\right)\right]=0,$$
for $r\geqslant0$. If $s>0$, it follows from Case 2 and \eqref{eq:inj:ecec} that 
\begin{align*}
\left[\widehat{\pi}\left(e_{c-1,c+1}^{(r+1)}\right), \widehat{\pi}\left(e_c^{(s+1)}\right)\right]
=&\left[\left[\widehat{\pi}\left(e_{c-1}^{(r+1)}\right), \widehat{\pi}\left(e_c^{(1)}\right)\right], \widehat{\pi}\left(e_c^{(s+1)}\right)\right]\\
=&\left[ \widehat{\pi}\left(e_{c-1,c+1}^{(r+s+1)}\right), \widehat{\pi}\left(e_c^{(1)}\right)\right]=0.
\end{align*}
If $a<c-1$, 
$$\left[\widehat{\pi}\left(e_{a,c+1}^{(r+1)}\right), \widehat{\pi}\left(e_c^{(s+1)}\right)\right]\\
=\left[\left[\widehat{\pi}\left(e_{a,c-1}^{(r+1)}\right), \widehat{\pi}\left(e_{c-1,c+1}^{(1)}\right)\right], \widehat{\pi}\left(e_c^{(s+1)}\right)\right]=0,$$
since $\left[\widehat{\pi}\left(e_{a,c-1}^{(r+1)}\right), \widehat{\pi}\left(e_c^{(s+1)}\right)\right]=0$ by Case 1, and $\left[\widehat{\pi}\left(e_{c-1,c+1}^{(1)}\right), \widehat{\pi}\left(e_c^{(s+1)}\right)\right]=0$ as we proved above.

Subcase 3-3: $a=c$ and $b>c+1$. If $b=c+2$, a similar argument as in Subcase 3-2 shows that
\begin{align*}
\left[\widehat{\pi}\left(e_{c,c+2}^{(r+1)}\right),\widehat{\pi}\left(e_{c}^{(s+1)}\right)\right]
=&\left[\left[\widehat{\pi}\left(e_{c}^{(1)}\right),\widehat{\pi}\left(e_{c+1}^{(s+1)}\right)\right],\widehat{\pi}\left(e_{c}^{(s+1)}\right)\right]\\
=&-\left[\widehat{\pi}\left(e_{c}^{(1)}\right),\left[\widehat{\pi}\left(e_{c}^{(1)}\right),\widehat{\pi}\left(e_{c+1}^{(r+s+1)}\right)\right]\right]=0.
\end{align*}
For $b>c+2$, it follows from Case 1 that
\begin{align}\label{eq:inj:ecbec}
    \left[\widehat{\pi}\left(e_{cb}^{(r+1)}\right),\widehat{\pi}\left(e_{c}^{(s+1)}\right)\right]
    =&\left[\left[\widehat{\pi}\left(e_{c,c+2}^{(r+1)}\right),
    \widehat{\pi}\left(e_{c+2,b}^{(1)}\right)\right],
    \widehat{\pi}\left(e_{c}^{(s+1)}\right)\right]\nonumber\\
    =&\left[\left[\widehat{\pi}\left(e_{c,c+2}^{(r+1)}\right),
    \widehat{\pi}\left(e_{c}^{(s+1)}\right)\right],
    \widehat{\pi}\left(e_{c+2,b}^{(1)}\right)\right]=0.
\end{align}

Subcase 3-4: $a<c$ and $b>c+1$.
\begin{align*}
    \left[\widehat{\pi}\left(e_{ab}^{(r+1)}\right),\widehat{\pi}\left(e_{c}^{(s+1)}\right)\right]
    =&\left[\widehat{\pi}\left(e_{a,c+1}^{(r+1)}\right),
    \left[\widehat{\pi}\left(e_{c+1,b}^{(1)}\right),
    \widehat{\pi}\left(e_{c}^{(s+1)}\right)\right]\right]\\
    =&-\left[\widehat{\pi}\left(e_{a,c+1}^{(r+1)}\right),
    \widehat{\pi}\left(e_{c,b}^{(s+1)}\right)\right]\\
    =&-\left[\left[\widehat{\pi}\left(e_{a,c}^{(r+1)}\right),
    \widehat{\pi}\left(e_{c,b}^{(s+1)}\right)\right],
    \widehat{\pi}\left(e_{c}^{(1)}\right)\right]\\
    =&-  \left[\widehat{\pi}\left(e_{ab}^{(r+s+1)}\right),\widehat{\pi}\left(e_{c}^{(1)}\right)\right].
\end{align*}
Thus, we have $ \left[\widehat{\pi}\left(e_{ab}^{(r+1)}\right),\widehat{\pi}\left(e_{c}^{(1)}\right)\right]=0$. Consequently,  $ \left[\widehat{\pi}\left(e_{ab}^{(r+1)}\right),\widehat{\pi}\left(e_{c}^{(s+1)}\right)\right]=0$.
\medskip

{\bf Case 4:} Let $b=d$ and $d>c+1$. Since
\begin{align*}
\left[\widehat{\pi}\left(e_{ad}^{(r+1)}\right), \widehat{\pi}\left(e_{cd}^{(s+1)}\right)\right]
=&\left[\widehat{\pi}\left(e_{ad}^{(r+1)}\right),
\left[\widehat{\pi}\left(e_{c}^{(1)}\right),\widehat{\pi}\left(e_{c+1,d}^{(s+1)}\right)\right]\right]\\
=&
\left[\widehat{\pi}\left(e_{c}^{(1)}\right),\left[\widehat{\pi}\left(e_{ad}^{(r+1)}\right),\widehat{\pi}\left(e_{c+1,d}^{(s+1)}\right)\right]\right].
\end{align*}
Then an induction on $d-c$ starting with \eqref{eq:inj:eabec} yields that
\begin{align}\label{eq:inj:eadecd}
\left[\widehat{\pi}\left(e_{ad}^{(r+1)}\right), \widehat{\pi}\left(e_{cd}^{(s+1)}\right)\right]=0.
\end{align}
\medskip

{\bf Case 5:} Let $a=c$ and $b,d>a+1$. We shall show that
\begin{equation}
\left[\widehat{\pi}\left(e_{ab}^{(r+1)}\right), \widehat{\pi}\left(e_{ad}^{(s+1)}\right)\right]=0.
\label{eq:inj:eabead}
\end{equation}
We may assume $b<d$ and deduce by \eqref{eq:inj:eabec} that
\begin{align*}
\left[\widehat{\pi}\left(e_{ab}^{(r+1)}\right), \widehat{\pi}\left(e_{ad}^{(s+1)}\right)\right]
=&\left[\left[\widehat{\pi}\left(e_{a,b-1}^{(1)}\right),\widehat{\pi}\left(e_{b-1,b}^{(r+1)}\right)\right]
\widehat{\pi}\left(e_{ad}^{(s+1)}\right)\right]\\
=&\left[\left[\widehat{\pi}\left(e_{a,b-1}^{(1)}\right),\widehat{\pi}\left(e_{ad}^{(s+1)}\right)\right],
\widehat{\pi}\left(e_{b-1,b}^{(r+1)}\right)\right].
\end{align*}
Hence, \eqref{eq:inj:eabead} follows by induction on $b-a$. The case where $b-a=1$ is verified in \eqref{eq:inj:ecbec}.
\medskip

{\bf Case 6:} Let $a=c$, $b=d$, and $b>a+1$. Since the elements $\widehat{\pi}\left(e_{a,b-1}^{(1)}\right)$ and $\widehat{\pi}\left(e_{b-1,b}^{(r+1)}\right)$ commute with $\widehat{\pi}\left(e_{ab}^{(s+1)}\right)$ by \eqref{eq:inj:eabec}, \eqref{eq:inj:ecbec} and Case 5, we conclude that
$$\left[\widehat{\pi}\left(e_{ab}^{(r+1)}\right),\widehat{\pi}\left(e_{ab}^{(s+1)}\right)\right]=0,$$
since $\left[\widehat{\pi}\left(e_{a,b-1}^{(1)}\right),\widehat{\pi}\left(e_{b-1,b}^{(r+1)}\right)\right]=
\widehat{\pi}\left(e_{ab}^{(r+1)}\right)$.
\medskip

{\bf Case 7:} Let $a<c<c+1<d<b$. In this situation, it follows from \eqref{eq:inj:eabec} that
 \begin{align*}
\left[\widehat{\pi}\left(e_{ab}^{(r+1)}\right), \widehat{\pi}\left(e_{cd}^{(s+1)}\right)\right]
=&\left[\widehat{\pi}\left(e_{ab}^{(r+1)}\right),
\left[\widehat{\pi}\left(e_{c,c+1}^{(1)}\right),\widehat{\pi}\left(e_{c+1,d}^{(s+1)}\right)\right]\right]\\
=&
\left[\widehat{\pi}\left(e_{c,c+1}^{(1)}\right),\left[\widehat{\pi}\left(e_{ab}^{(r+1)}\right),\widehat{\pi}\left(e_{c+1,d}^{(s+1)}\right)\right]\right].
\end{align*}
Then it follows that $\left[\widehat{\pi}\left(e_{ab}^{(r+1)}\right), \widehat{\pi}\left(e_{cd}^{(s+1)}\right)\right]=0$ by induction on $d-c$.
\medskip

{\bf Case 8:} Let $a<c<b<d$. By Case 2, we have
\begin{align*}
\left[\widehat{\pi}\left(e_{ab}^{(r+1)}\right), \widehat{\pi}\left(e_{cd}^{(s+1)}\right)\right]
=&\left[\left[\widehat{\pi}\left(e_{ac}^{(r+1)}\right), \widehat{\pi}\left(e_{cb}^{(1)}\right)\right], \widehat{\pi}\left(e_{cd}^{(s+1)}\right)\right]\\
=&\left[\left[\widehat{\pi}\left(e_{ac}^{(r+1)}\right), \widehat{\pi}\left(e_{cd}^{(s+1)}\right)\right], \widehat{\pi}\left(e_{cb}^{(1)}\right)\right]
=\left[\widehat{\pi}\left(e_{ad}^{(r+s+1)}\right), \widehat{\pi}\left(e_{cb}^{(1)}\right)\right],
\end{align*}
which vanishes by Case 7.
\end{proof}

\begin{lemma}
\label{lem:grYD1}
The identity \eqref{eq:grhat:eet} holds in $\mathrm{gr}^{\prime}\YD(\mathfrak{q}_n)$.
\end{lemma}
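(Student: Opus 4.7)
The plan is to mirror the eight-case decomposition used in the proof of Lemma~\ref{lem:grYD}, with the only structural difference being that exactly one of the two factors in the commutator now carries a bar. Throughout, I will freely use the identities already established in Lemma~\ref{lem:grYD}, so that all bracket relations among the $\widehat{\pi}(e_{ab}^{(r+1)})$'s are known.

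First, I would establish the base cases. Extracting the coefficient of $u^{-r-1}v^{-s-1}$ in \eqref{eq:Dr:eaeta} and passing to $\mathrm{gr}'\YD(\mathfrak{q}_n)$ yields
$$\left[\widehat{\pi}\bigl(e_a^{(r+1)}\bigr),\, \widehat{\pi}\bigl(\bar{e}_a^{(s+1)}\bigr)\right]=0,$$
which handles the situation $a=c$, $b=d=a+1$ (here $\delta_{bc}=\delta_{ad}=0$). For the situation $b=c$ with $a=b-1$ and $d=b+1$, I would extract coefficients from \eqref{eq:Dr:etcircle} and run an induction on $r$ parallel to Case 2 of Lemma~\ref{lem:grYD} to obtain
$$\left[\widehat{\pi}\bigl(e_{b-1}^{(r+1)}\bigr),\, \widehat{\pi}\bigl(\bar{e}_b^{(s+1)}\bigr)\right]=(-1)^r\,\widehat{\pi}\bigl(\bar{e}_{b-1,b+1}^{(r+s+1)}\bigr),$$
together with its counterpart for $a=d$ obtained from \eqref{eq:Dr:ecircle} and the antisymmetry of the bracket. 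The sign $(-1)^r$ here is precisely what appears in the target formula; it originates from the odd parity of $\bar{e}$ combined with the filtration shift.

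Next, the extension from the adjacent base cases to arbitrary $1\le a<b\le n$ and $1\le c<d\le n$ proceeds by a nested induction on $b-a$ and $d-c$. Writing
$$\bar{e}_{cd}^{(s+1)}=\left[\cdots\left[\bar{e}_c^{(s+1)},e_{c+1}^{(1)}\right],\ldots,e_{d-1}^{(1)}\right], \qquad e_{ab}^{(r+1)}=\left[\cdots\left[e_a^{(r+1)},e_{a+1}^{(1)}\right],\ldots,e_{b-1}^{(1)}\right],$$
I would repeatedly apply the Jacobi identity, moving each $e_k^{(1)}$ across the other factor using Lemma~\ref{lem:grYD}. In the vanishing cases (when neither $\delta_{bc}$ nor $\delta_{ad}$ applies) the reduction scheme is identical to Cases 1, 3, 7, 8 of Lemma~\ref{lem:grYD}; the only new input needed is the vanishing of $[[\widehat{\pi}(e_a^{(r+1)}),\widehat{\pi}(\bar{e}_a^{(s+1)})],\widehat{\pi}(e_b^{(1)})]$ and $[[\widehat{\pi}(\bar{e}_a^{(r+1)}),\widehat{\pi}(e_a^{(s+1)})],\widehat{\pi}(e_b^{(1)})]$ for $|a-b|=1$, which I would read off directly from the simplified Serre relations \eqref{eq:Dr:eserre3} and \eqref{eq:Dr:eserre4} by taking coefficients of suitable monomials in $u$ and reducing modulo lower filtration.

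The main obstacle I anticipate is the careful tracking of the sign $(-1)^r$ through each Jacobi expansion: when a degree-$r$ even element $e$ is moved past an odd $\bar{e}$, the filtered versus the graded commutators pick up different signs, and the coefficient has to land on the single generator $\widehat{\pi}(\bar{e}_{ad}^{(r+s+1)})$ with the correct $(-1)^r$. Once these signs are bookkept consistently (for which a single parity audit in the $b=c$, $d=b+1$ base case suffices, since the inductive steps all move $e_k^{(1)}$ which is even), each of the eight cases reduces mechanically to the base cases above in combination with Lemma~\ref{lem:grYD}.
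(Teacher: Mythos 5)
Your proposal is correct and follows essentially the same route as the paper's proof: the same eight-case reduction as in Lemma~\ref{lem:grYD}, with the base cases read off from \eqref{eq:Dr:eaeta}, \eqref{eq:Dr:ecircle}--\eqref{eq:Dr:etcircle} together with \eqref{eq:Dr:eaea1}--\eqref{eq:Dr:eaeta1} (which supply the sign $(-1)^r$), the key nilpotency input taken from the Serre relations \eqref{eq:Dr:eserre3}--\eqref{eq:Dr:eserre4}, and the general case handled by nested induction via the Jacobi identity using the already-established relations among the $\widehat{\pi}\bigl(e_{ab}^{(r+1)}\bigr)$. No gaps beyond the routine sign bookkeeping you already flag.
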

\begin{proof}
The proof is similar to Lemma~\ref{lem:grYD1}. According to the equations \eqref{eq:Dr:eaeb}, \eqref{eq:Dr:eaeta}, \eqref{eq:Dr:etaeta}, \eqref{eq:Dr:eaea1}
 and \eqref{eq:Dr:eaeta1}, we know that the following relations hold in $\mathrm{gr}^{\prime}\YD(\mathfrak{q}_n)$.
 \begin{align}
&\left[\widehat{\pi}\left(e_a^{(r)}\right), \widehat{\pi}\left(\bar{e}_a^{(s)}\right)\right]=\left[\widehat{\pi}\left(\bar{e}_a^{(r)}\right), \widehat{\pi}\left(e_a^{(s)}\right)\right]=0, \label{eq:inj:eaeat}\\
&\left[\widehat{\pi}\left(e_a^{(r)}\right), \widehat{\pi}\left(\bar{e}_b^{(s)}\right)\right]=\left[\widehat{\pi}\left(\bar{e}_a^{(r)}\right), \widehat{\pi}\left(e_b^{(s)}\right)\right]=0,\qquad \text{if } |a-b|>1, \label{eq:inj:eaebt}\\
&\left[\widehat{\pi}\left(e_a^{(r)}\right), \widehat{\pi}\left(e_{a+1}^{(s)}\right)\right]=(-1)^r\left[\widehat{\pi}\left(\bar{e}_a^{(r)}\right), \widehat{\pi}\left(\bar{e}_{a+1}^{(s)}\right)\right], \label{eq:inj:eatea1t}\\
&\left[\widehat{\pi}\left(e_a^{(r)}\right), \widehat{\pi}\left(\bar{e}_{a+1}^{(s)}\right)\right]=(-1)^{r+1}\left[\widehat{\pi}\left(\bar{e}_a^{(r)}\right), \widehat{\pi}\left(e_{a+1}^{(s)}\right)\right]. \label{eq:inj:eaea1t}
 \end{align}
 
 {\bf Case 1:}  $a<b<c<d$. The identity
$\left[\widehat{\pi}\left(\bar{e}_{ab}^{(r+1)}\right), 
\widehat{\pi}\left(e_{cd}^{(s+1)}\right)\right]
=0$ in $\mathrm{gr}^{\prime}\YD(\mathfrak{q}_n)$ follows from the identity  \eqref{eq:inj:eaebt} and the definition of $\bar{e}_{a,b}^{(r)}$.

{\bf Case 2:} $a<b=c<d$. If $a=b-1<b=c<b+1=d$, we compare the coefficients of $u^{-r}v^{-s}$ in \eqref{eq:Dr:etcircle} and obtain the following relation in $\YD(\mathfrak{q}_n)$:
$$\left[\bar{e}_{b-1}^{(r+1)},e_b^{(s)}\right]
-\left[\bar{e}_{b-1}^{(r)},e_b^{(s+1)}\right]
=\bar{e}_{b-1}^{(r)}e_b^{(s)}+(-1)^re_{b-1}^{(r)}\bar{e}_b^{(s)}.$$
It yields that
$$\left[\widehat{\pi}\left(\bar{e}_{b-1}^{(r+1)}\right),\widehat{\pi}\left(e_b^{(s+1)}\right)\right]
=\left[\widehat{\pi}\left(\bar{e}_{b-1}^{(r+2)}\right),\widehat{\pi}\left(e_b^{(s)}\right)\right]
=\left[\widehat{\pi}\left(\bar{e}_{b-1}^{(r+s+1)}\right),\widehat{\pi}\left(e_b^{(1)}\right)\right]
=\widehat{\pi}\left(\bar{e}_{b-1,b+1}^{(r+s+1)}\right).$$
If $a<a+1<b=c<b+1=d$. By Case 9, we compute that
\begin{align*}
\left[\widehat{\pi}\left(\bar{e}_{ab}^{(r+t+1)}\right),
\widehat{\pi}\left(e_{b}^{(s+1)}\right)\right]
=&\left[\left[\widehat{\pi}\left(\bar{e}_{a,b-1}^{(r+1)}\right),
\widehat{\pi}\left(e_{b-1}^{(t+1)}\right)\right],
\widehat{\pi}\left(e_b^{(s+1)}\right)\right]\\
=&\left[\widehat{\pi}\left(\bar{e}_{a,b-1}^{(r+1)}\right),\left[
\widehat{\pi}\left(e_{b-1}^{(t+1)}\right),
\widehat{\pi}\left(e_b^{(s+1)}\right)\right]\right]\\
=&\left[\widehat{\pi}\left(\bar{e}_{a,b-1}^{(r+1)}\right),
\widehat{\pi}\left(e_{b-1,b+1}^{(s+t+1)}\right)\right].
\end{align*}
It follows that
\begin{align*}
\left[\widehat{\pi}\left(\bar{e}_{ab}^{(r+1)}\right),
\widehat{\pi}\left(e_{b}^{(s+1)}\right)\right]
=&\left[\widehat{\pi}\left(\bar{e}_{a,b-1}^{(r+1)}\right),
\widehat{\pi}\left(e_{b-1,b+1}^{(s+1)}\right)\right]\\
=&\left[\widehat{\pi}\left(\bar{e}_{ab}^{(r+s+1)}\right),
\widehat{\pi}\left(e_{b}^{(1)}\right)\right]
=\widehat{\pi}\left(\bar{e}_{a,b+1}^{(r+s+1)}\right).
\end{align*}

In general, if $b+1<d$, we conclude by induction on $d-b$ that
\begin{align}\label{eq:inj:beabebd}
\left[\widehat{\pi}\left(\bar{e}_{ab}^{(r+1)}\right),
\widehat{\pi}\left(e_{bd}^{(s+1)}\right)\right]
=\widehat{\pi}\left(\bar{e}_{ad}^{(r+s+1)}\right).
\end{align}

{\bf Case 3: } $d=c+1$ and $c<b$. We have to show that
\begin{equation}\label{eq:inj:beabec}
\left[\widehat{\pi}\left(\bar{e}_{ab}^{(r+1)}\right),\widehat{\pi}\left(e_c^{(s+1)}\right)\right]
=0,
\end{equation}
provided that $a\leqslant c$ and $c<b$. We consider the following subcases. 

Subcase 3-1: $a=c$ and $b=c+1$. In this situation, we have
\begin{equation}
\left[\widehat{\pi}\left(\bar{e}_{c}^{(r+1)}\right),
\widehat{\pi}\left(e_{c}^{(s+1)}\right)\right]=0,
\label{eq:inj:becec}
\end{equation}
by comparing the coefficients of $u^{-r}v^{-s}$ on both sides of \eqref{eq:Dr:eaeta}.

Subcase 3-2: $a<c$ and $b=c+1$. Deduce from 
$$\left[e_{a}^{(r)}, \bar{e}_{a+1}^{(s)}\right]=(-1)^{r+1}\left[\bar{e}_{a}^{(r)}, e_{a+1}^{(s)}\right],$$
we know that 
$$\bar{e}_{ab}^{(r)}:=\left[\bar{e}_{a,b-1}^{(r)}, e_{b-1}^{(1)}\right]=(-1)^{r+1}\left[e_{a,b-1}^{(r)}, \bar{e}_{b-1}^{(1)}\right].$$

The Serre relations \eqref{eq:Dr:eserre3} and \eqref{eq:Dr:eserre4} imply 
$$\left[\left[e_{c-1}^{(r+1)}, \bar{e}_c^{(1)}\right], e_c^{(1)}\right]=\left[\left[e_{c-1}^{(r+1)}, e_c^{(1)}\right], \bar{e}_c^{(1)}\right]=0.$$
Hence,
$$\left[\widehat{\pi}\left(\bar{e}_{c-1,c+1}^{(r+1)}\right),\widehat{\pi}\left(e_c^{(1)}\right)\right]=(-1)^{r+2}\left[\left[\widehat{\pi}\left(e_{c-1}^{(r+1)}\right), \widehat{\pi}\left(\bar{e}_c^{(1)}\right)\right], \widehat{\pi}\left(e_c^{(1)}\right)\right]=0,$$
for $r\geqslant0$. If $s>0$, it follows from Case 10 and \eqref{eq:inj:becec} that 
\begin{align*}
\left[\widehat{\pi}\left(\bar{e}_{c-1,c+1}^{(r+1)}\right), \widehat{\pi}\left(e_c^{(s+1)}\right)\right]
=&(-1)^{r+2}\left[\left[\widehat{\pi}\left(e_{c-1}^{(r+1)}\right), \widehat{\pi}\left(\bar{e}_c^{(1)}\right)\right], \widehat{\pi}\left(e_c^{(s+1)}\right)\right]\\
=&(-1)^{r+2}\left[ \widehat{\pi}\left(e_{c-1,c+1}^{(r+s+1)}\right), \widehat{\pi}\left(\bar{e}_c^{(1)}\right)\right]\\
=&(-1)^{s}\left[ \widehat{\pi}\left(\bar{e}_{c-1,c+1}^{(r+s+1)}\right), \widehat{\pi}\left(e_c^{(1)}\right)\right]=0.
\end{align*}
If $a<c-1$, 
$$\left[\widehat{\pi}\left(\bar{e}_{a,c+1}^{(r+1)}\right), \widehat{\pi}\left(e_c^{(s+1)}\right)\right]\\
=(-1)^{r+2}\left[\left[\widehat{\pi}\left(e_{a,c-1}^{(r+1)}\right), \widehat{\pi}\left(\bar{e}_{c-1,c+1}^{(1)}\right)\right], \widehat{\pi}\left(e_c^{(s+1)}\right)\right]=0,$$
since $\left[\widehat{\pi}\left(e_{a,c-1}^{(r+1)}\right), \widehat{\pi}\left(e_c^{(s+1)}\right)\right]=0$ by Case 1, and $\left[\widehat{\pi}\left(\bar{e}_{c-1,c+1}^{(1)}\right), \widehat{\pi}\left(e_c^{(s+1)}\right)\right]=0$ as we proved above.

Subcase 3-3: $a=c$ and $b>c+1$. If $b=c+2$, a similar argument as in Subcase 11-2 shows that
\begin{align*}
\left[\widehat{\pi}\left(\bar{e}_{c,c+2}^{(r+1)}\right),\widehat{\pi}\left(e_{c}^{(s+1)}\right)\right]
=&\left[\left[\widehat{\pi}\left(e_{c}^{(1)}\right),\widehat{\pi}\left(\bar{e}_{c+1}^{(r+1)}\right)\right],\widehat{\pi}\left(e_{c}^{(s+1)}\right)\right]\\
=&-\left[\widehat{\pi}\left(e_{c}^{(1)}\right),\left[\widehat{\pi}\left(\bar{e}_{c}^{(1)}\right),\widehat{\pi}\left(e_{c+1}^{(r+s+1)}\right)\right]\right]=0.
\end{align*}
For $b>c+2$, it follows from Case 8 that
\begin{align}\label{eq:inj:becbec}
    \left[\widehat{\pi}\left(\bar{e}_{cb}^{(r+1)}\right),\widehat{\pi}\left(e_{c}^{(s+1)}\right)\right]
    =&\left[\left[\widehat{\pi}\left(\bar{e}_{c,c+2}^{(r+1)}\right),
    \widehat{\pi}\left(e_{c+2,b}^{(1)}\right)\right],
    \widehat{\pi}\left(e_{c}^{(s+1)}\right)\right]\nonumber\\
    =&\left[\left[\widehat{\pi}\left(\bar{e}_{c,c+2}^{(r+1)}\right),
    \widehat{\pi}\left(e_{c}^{(s+1)}\right)\right],
    \widehat{\pi}\left(e_{c+2,b}^{(1)}\right)\right]=0.
\end{align}

Subcase 3-4: $a<c$ and $b>c+1$.
\begin{align*}
    \left[\widehat{\pi}\left(\bar{e}_{ab}^{(r+1)}\right),\widehat{\pi}\left(e_{c}^{(s+1)}\right)\right]
    =&\left[\widehat{\pi}\left(\bar{e}_{a,c+1}^{(r+1)}\right),
    \left[\widehat{\pi}\left(e_{c+1,b}^{(1)}\right),
    \widehat{\pi}\left(e_{c}^{(s+1)}\right)\right]\right]\\
    =&-\left[\widehat{\pi}\left(\bar{e}_{a,c+1}^{(r+1)}\right),
    \widehat{\pi}\left(e_{cb}^{(s+1)}\right)\right]\\
    =&-\left[\left[\widehat{\pi}\left(\bar{e}_{ac}^{(r+1)}\right),
    \widehat{\pi}\left(e_{cb}^{(s+1)}\right)\right],
    \widehat{\pi}\left(e_{c}^{(1)}\right)\right]\\
    =&-  \left[\widehat{\pi}\left(\bar{e}_{ab}^{(r+s+1)}\right),\widehat{\pi}\left(e_{c}^{(1)}\right)\right].
\end{align*}
Thus, we have $ \left[\widehat{\pi}\left(\bar{e}_{ab}^{(r+1)}\right),\widehat{\pi}\left(e_{c}^{(1)}\right)\right]=0$. Consequently,  $ \left[\widehat{\pi}\left(\bar{e}_{ab}^{(r+1)}\right),\widehat{\pi}\left(e_{c}^{(s+1)}\right)\right]=0$.
\medskip

{\bf Case 4:} Let $b=d$ and $d>c+1$. Since
\begin{align*}
\left[\widehat{\pi}\left(\bar{e}_{ad}^{(r+1)}\right), \widehat{\pi}\left(e_{cd}^{(s+1)}\right)\right]
=&\left[\widehat{\pi}\left(\bar{e}_{ad}^{(r+1)}\right),
\left[\widehat{\pi}\left(e_{c}^{(1)}\right),\widehat{\pi}\left(e_{c+1,d}^{(s+1)}\right)\right]\right]\\
=&
\left[\widehat{\pi}\left(e_{c}^{(1)}\right),\left[\widehat{\pi}\left(\bar{e}_{ad}^{(r+1)}\right),\widehat{\pi}\left(e_{c+1,d}^{(s+1)}\right)\right]\right].
\end{align*}
Then an induction on $d-c$ starting with \eqref{eq:inj:becbec} yields that
\begin{align}\label{eq:inj:beadecd}
\left[\widehat{\pi}\left(\bar{e}_{ad}^{(r+1)}\right), \widehat{\pi}\left(e_{cd}^{(s+1)}\right)\right]=0.
\end{align}
\medskip

{\bf Case 5:} Let $a=c$ and $b,d>a+1$. We shall show that
\begin{equation}
\left[\widehat{\pi}\left(\bar{e}_{ab}^{(r+1)}\right), \widehat{\pi}\left(e_{ad}^{(s+1)}\right)\right]=0.
\label{eq:inj:beabead}
\end{equation}
We may assume $b<d$ and deduce by \eqref{eq:inj:beabebd} that
\begin{align*}
\left[\widehat{\pi}\left(\bar{e}_{ab}^{(r+1)}\right), \widehat{\pi}\left(e_{ad}^{(s+1)}\right)\right]
=&\left[\left[\widehat{\pi}\left(\bar{e}_{a,b-1}^{(1)}\right),\widehat{\pi}\left(e_{b-1,b}^{(r+1)}\right)\right]
\widehat{\pi}\left(e_{ad}^{(s+1)}\right)\right]\\
=&\left[\left[\widehat{\pi}\left(\bar{e}_{a,b-1}^{(1)}\right),\widehat{\pi}\left(e_{ad}^{(s+1)}\right)\right],
\widehat{\pi}\left(e_{b-1,b}^{(r+1)}\right)\right].
\end{align*}
Hence, \eqref{eq:inj:beabead} follows by induction on $b-a$. The case where $b-a=1$ is verified in \eqref{eq:inj:becbec}.
\medskip

{\bf Case 6:} Let $a=c$, $b=d$, and $b>a+1$. Since $\widehat{\pi}\left(e_{a,b-1}^{(1)}\right)$ and $\widehat{\pi}\left(e_{b-1,b}^{(r+1)}\right)$ commute with $\widehat{\pi}\left(\bar{e}_{ab}^{(s+1)}\right)$ by \eqref{eq:inj:beabec}, \eqref{eq:inj:becbec} and Case 5, we conclude that
$$\left[\widehat{\pi}\left(\bar{e}_{ab}^{(r+1)}\right),\widehat{\pi}\left(e_{ab}^{(s+1)}\right)\right]=0,$$
since $\left[\widehat{\pi}\left(e_{a,b-1}^{(1)}\right),\widehat{\pi}\left(\bar{e}_{b-1,b}^{(r+1)}\right)\right]=
\widehat{\pi}\left(\bar{e}_{ab}^{(r+1)}\right)$.
\medskip

{\bf Case 7:} Let $a<c<c+1<d<b$. In this situation, it follows from \eqref{eq:inj:beabec} that
 \begin{align*}
\left[\widehat{\pi}\left(\bar{e}_{ab}^{(r+1)}\right), \widehat{\pi}\left(e_{cd}^{(s+1)}\right)\right]
=&\left[\widehat{\pi}\left(\bar{e}_{ab}^{(r+1)}\right),
\left[\widehat{\pi}\left(e_{c,c+1}^{(1)}\right),\widehat{\pi}\left(e_{c+1,d}^{(s+1)}\right)\right]\right]\\
=&
\left[\widehat{\pi}\left(e_{c,c+1}^{(1)}\right),\left[\widehat{\pi}\left(\bar{e}_{ab}^{(r+1)}\right),\widehat{\pi}\left(e_{c+1,d}^{(s+1)}\right)\right]\right].
\end{align*}
Then it follows that $\left[\widehat{\pi}\left(\bar{e}_{ab}^{(r+1)}\right), \widehat{\pi}\left(e_{cd}^{(s+1)}\right)\right]=0$ by induction on $d-c$.
\medskip

{\bf Case 8:} Let $a<c<b<d$. By Case 10, we have
\begin{align*}
\left[\widehat{\pi}\left(\bar{e}_{ab}^{(r+1)}\right), \widehat{\pi}\left(e_{cd}^{(s+1)}\right)\right]
=&\left[\left[\widehat{\pi}\left(\bar{e}_{ac}^{(r+1)}\right), \widehat{\pi}\left(e_{cb}^{(1)}\right)\right], \widehat{\pi}\left(e_{cd}^{(s+1)}\right)\right]\\
=&\left[\left[\widehat{\pi}\left(\bar{e}_{ac}^{(r+1)}\right), \widehat{\pi}\left(e_{cd}^{(s+1)}\right)\right], \widehat{\pi}\left(e_{cb}^{(1)}\right)\right]
=\left[\widehat{\pi}\left(\bar{e}_{ad}^{(r+s+1)}\right), \widehat{\pi}\left(e_{cb}^{(1)}\right)\right],
\end{align*}
which vanishes by Case 7.
\end{proof}

\begin{remark}
    It follows from \eqref{eq:Dr:haea}-\eqref{eq:Dr:haeta} and \eqref{eq:Dr:hafa}-\eqref{eq:Dr:hafta} that
\begin{align*}
    &\left[H_a^1(u), E_a^2(v)\right]=\tilde{J}^1\left[H_a^1(u), E_a^2(v)\right]\tilde{J}^2,\\
    &\left[H_a^1(u), F_a^2(v)\right]=\tilde{J}^1\left[H_a^1(u), F_a^2(v)\right]\tilde{J}^2,
\end{align*}
where $\tilde{J}=\sum\limits_{i\in I_{n|n}}E_{i,-i}$.
\end{remark}

\begin{remark}
Let $\Y^+(\mathfrak{q}_n)$ (resp. $\Y^-(\mathfrak{q}_n)$) be the subalgebra of $\Y(\mathfrak{q}_n)$ generated by the coefficients of $e_b(u)$ and $\bar{e}_b(u)$ (resp. $f_b(u)$ and $\bar{f}_b(u)$) for $b=1,2,\ldots,n$. Let $\Y^0(\mathfrak{q}_n)$ be the subalgebra of $\Y(\mathfrak{q}_n)$ generated by the coefficients of $h_a(u)$ and $\bar{h}_a(u)$ for $a=1,2,\ldots,n$. The relations \eqref{eq:Dr:haea}-\eqref{eq:Dr:etafta} show that the multiplication gives a surjective homomorphism
$$\Y^-(\mathfrak{q}_n)\otimes\Y^0(\mathfrak{q}_n)\otimes\Y^+(\mathfrak{q}_n)\rightarrow \Y(\mathfrak{q}_n).$$
It is indeed an isomorphism by the PBW theorem (see \cite[Corollary~2.4]{N99}). Moreover, it follows from \eqref{eq:Dr:hahb}-\eqref{eq:Dr:htahta} that the diagonal part $\Y^0(\mathfrak{q}_n)$ is isomorphic to $\Y(\mathfrak{q}_1)\otimes \Y(\mathfrak{q}_1)\otimes\cdots\otimes\Y(\mathfrak{q}_1)$, that is, the tensor product of $n$ copies of $\Y(\mathfrak{q}_1)$. We observe that $\Y^0(\mathfrak{q}_n)$ has a nontrivial odd part. 
\end{remark}

\section{The Center }\label{se:qCenter}

As an application of the Drinfeld presentation of $\Y(\mathfrak{q}_n)$ given in Theorem~\ref{thm:isomorphism}, we deduce an alternative expression of the generators of the center of $\Y(\mathfrak{q}_n)$ given in \cite[Theorem~3.4]{N99} in terms of Drinfeld generators.

We first review the generators of the center of $\Y(\mathfrak{q}_n)$ given in \cite{N99}. We set 
\begin{align}
\Lambda=&\left(\mathrm{id}\otimes\tau\right)(\mathsf{P})=\sum\limits_{i,j\in I_{n|n}}(-1)^{|i||j|}\mathsf{E}_{ij}\otimes\mathsf{E}_{ij},\label{eq:ct:lmd}\\
\Theta=&\left(\mathrm{id}\otimes\tau\right)(\mathsf{Q})=\sum\limits_{i,j\in I_{n|n}}
(-1)^{|i||j|+|i|+|j|}\mathsf{E}_{ij}\otimes\mathsf{E}_{-i,-j},\label{eq:ct:tht}
\end{align}
where $\tau$ is the anti-automorphism of the associative superalgebra $\mathrm{End}\left(\mathbb{C}^{n|n}\right)$ defined by $$\tau:\ \mathsf{E}_{ij}\mapsto(-1)^{|i||j|+|i|}\mathsf{E}_{ji},\quad i,j\in I_{n|n}.$$ It is shown in \cite{N99} that there is a series $z(u)=1+\sum\limits_{r\geqslant1}z_ru^{-r}\in\Y(\mathfrak{q}_n)[[u^{-1}]]$ such that
\begin{equation}
(\Lambda\otimes1)T^1(u)\tau_2\left(\widetilde{T}^2(u)\right)=\Lambda\otimes z(u),\label{eq:ct:TT}
\end{equation}
where $\widetilde{T}(u)$ is the inverse of $T(u)$, and $\tau_2$ denotes the action of $\tau$ on the second tensor factor. Moreover, $z(u)=z(-u)$ and elements $z_2, z_4, \ldots$ are free generators of the center of $\Y(\mathfrak{q}_n)$.

It is known from \cite{N22} that the images of $\Lambda$ and $\Theta$ acting on $\left(\mathbb{C}^{n|n}\right)$ are both one-dimensional\footnote{Here, $\Lambda$ and $\Theta$ correspond to $K$ and $-L$ in \cite[Section VI]{N22}, respectively.}. These operators satisfy the following relation:
\begin{equation}
\Lambda^2=\Theta^2=0,\qquad \Lambda\Theta=\Theta\Lambda=0.\label{eq:ct:ltrelation}
\end{equation}
Moreover, we have the following lemma.
\begin{lemma}
\label{lem:ct:lmdmt}
If $A(u)=\sum\limits_{i,j\in I_{n|n}}\mathsf{E}_{ij}\otimes a_{ij}(u)\in\mathrm{End}(\mathbb{C}^{n|n})\otimes\Y(\mathfrak{q}_n)$ satisfies $a_{ij}(u)=a_{-i,-j}(-u)$, then
$$(\Lambda\otimes1)\tau_2(A^2(u))=(\Lambda\otimes1) A^1(u),\qquad \tau_2(A^2(u))(\Theta\otimes1)=A^1(-u)(\Theta\otimes1),$$
and 
$$(\Lambda\otimes1)A^1(u)(\Lambda\otimes1)=\mathrm{str}(A(u)),$$
where $\mathrm{str}(A(u))=\sum\limits_{i\in I_{n|n}}(-1)^{|i|}a_{ii}(u).$
\end{lemma}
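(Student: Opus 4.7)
The plan is to prove all three identities by direct componentwise expansion in $\mathrm{End}(\mathbb{C}^{n|n})^{\otimes 2}\otimes\Y(\mathfrak{q}_n)$, using the super-tensor product convention $(a_1\otimes b_1)(a_2\otimes b_2)=(-1)^{|a_2||b_1|}a_1a_2\otimes b_1 b_2$ iterated to three factors, together with the matrix unit identity $\mathsf{E}_{ij}\mathsf{E}_{kl}=\delta_{jk}\mathsf{E}_{il}$.

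For the first identity, I substitute $\Lambda=\sum_{i,j}(-1)^{|i||j|}\mathsf{E}_{ij}\otimes\mathsf{E}_{ij}$ and $\tau_2(A^2(u))=\sum_{k,l}(-1)^{|k||l|+|k|}\,1\otimes\mathsf{E}_{lk}\otimes a_{kl}(u)$. On the left the product in the second slot collapses via $\mathsf{E}_{ij}\mathsf{E}_{lk}=\delta_{jl}\mathsf{E}_{ik}$, giving a triple sum indexed by $i,j,k$ with $a_{kj}(u)$ in the Yangian slot. On the right the first-slot product $\mathsf{E}_{ij}\mathsf{E}_{kl}=\delta_{jk}\mathsf{E}_{il}$ forces $k=j$, and moving $\mathsf{E}_{kl}$ past $\mathsf{E}_{ij}$ in the second slot contributes a sign $(-1)^{(|k|+|l|)(|i|+|j|)}$. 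After relabeling $l\leftrightarrow k$, both exponents reduce modulo $2$ to $|i||j|+|j||k|+|k|$, and the two sums coincide; note that the symmetry hypothesis on $A$ is not needed for this identity.

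For the second identity, the analogous expansion of $\tau_2(A^2(u))(\Theta\otimes 1)$ uses $\mathsf{E}_{lk}\mathsf{E}_{-i,-j}=\delta_{k,-i}\mathsf{E}_{l,-j}$, which forces $k=-i$ and produces $a_{-i,l}(u)$ as the Yangian component. At this stage I invoke the hypothesis $a_{ij}(u)=a_{-i,-j}(-u)$ to rewrite $a_{-i,l}(u)=a_{i,-l}(-u)$, and then substitute $l\mapsto -l$ to convert the second-slot factor $\mathsf{E}_{l,-j}$ into $\mathsf{E}_{-l,-j}$ and the Yangian entry into $a_{i,l}(-u)$. On the right, the parallel expansion of $A^1(-u)(\Theta\otimes 1)$ via $\mathsf{E}_{kl}\mathsf{E}_{pq}=\delta_{lp}\mathsf{E}_{kq}$ produces the matching form, and term-by-term comparison completes the identity.

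For the third identity, I first compute $(\Lambda\otimes 1)A^1(u)$ as in the first identity, obtaining a triple sum of the shape $\sum_{i,j,l}c_{ijl}\,\mathsf{E}_{il}\otimes\mathsf{E}_{ij}\otimes a_{jl}(u)$. Right-multiplying by a second copy of $\Lambda\otimes 1=\sum_{p,q}(-1)^{|p||q|}\mathsf{E}_{pq}\otimes\mathsf{E}_{pq}\otimes 1$, the two matrix slots each acquire a Kronecker delta: $\mathsf{E}_{il}\mathsf{E}_{pq}$ forces $l=p$ while $\mathsf{E}_{ij}\mathsf{E}_{pq}$ forces $j=p$, so jointly $j=l=p$ in the surviving sum. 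The Yangian factor then collapses to $\sum_{j}(-1)^{|j|}a_{jj}(u)=\mathrm{str}(A(u))$, while the remaining matrix factor reduces to $\Lambda\otimes 1$; the displayed equality should therefore be read as $(\Lambda\otimes 1)A^1(u)(\Lambda\otimes 1)=\mathrm{str}(A(u))\cdot(\Lambda\otimes 1)$, consistent with the interpretation of \eqref{eq:ct:TT}. The main obstacle throughout is purely bookkeeping, namely carefully tracking the super-signs when matrix units $\mathsf{E}_{ij}\otimes\mathsf{E}_{kl}$ cross the Yangian entries $a_{pq}(u)$ of parity $|p|+|q|$; once the contributions on each side are collected modulo $2$, the three identities follow by inspection.
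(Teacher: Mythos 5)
Your proposal is correct and takes exactly the route the paper intends: the paper simply states that the lemma follows ``by a direct computation'' and omits the details, and your componentwise expansion with careful tracking of the super-signs supplies precisely that computation (I checked the first and third identities in full and the second in structure, and the signs work out). Your remark that the third identity must be read as $(\Lambda\otimes 1)A^1(u)(\Lambda\otimes 1)=\Lambda\otimes\mathrm{str}(A(u))$ rather than as a literal scalar is also right, and is consistent with how the lemma is invoked in the proof of the theorem that follows.
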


\begin{proof}
The proof can be completed by a direct computation. We omit the details here. 
\end{proof}

We first express the central series using the supertrace. Similar formulas for the Yangian $\Y(\mathfrak{gl}_n)$ can be found in \cite[Section~1.9]{M07}, for the quantum affine queer superalgebra have recently been provided in \cite{LMZ25}.

\begin{theorem}
Let $T(u)$ be the generator matrix of $\Y(\mathfrak{q}_n)$, and $z(u)$ be the central series given in \eqref{eq:ct:TT}. Then
$$z(u)=1-\mathrm{str}\left(T(u)\partial \widetilde{T}(u)\right),$$
where $\widetilde{T}(u)$ is the inverse of $T(u)$, and $\partial\widetilde{T}(u)$ is the formal derivative of $\widetilde{T}(u)$ with respect to $u$.
\end{theorem}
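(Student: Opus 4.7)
The plan is to reduce the identity to a component-level calculation using two ingredients: the inverse RTT component relation displayed just after~\eqref{eq:TRT} and the matrix identity $\widetilde T(u) T(u) = I$. The derivative $\partial\widetilde T(u)$ will appear as an indeterminate $0/0$ limit.

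First I will unpack the defining relation~\eqref{eq:ct:TT} into a scalar formula. Expanding $\Lambda$, $\tau_2(\widetilde T^2(u))$ and $T^1(u)$ into their matrix-unit components and tracking the Koszul signs through the triple tensor product $\mathrm{End}(\mathbb{C}^{n|n})^{\otimes 2}\otimes \Y(\mathfrak{q}_n)$, matching the coefficient of $\mathsf{E}_{pq}\otimes\mathsf{E}_{pq}$ on each side yields the ``per-column'' identity
$$z(u) \;=\; \sum_{i\in I_{n|n}} t_{iq}(u)\,\widetilde t_{qi}(u)\qquad\text{for every }q\in I_{n|n}.$$
The off-diagonal matching ($j\ne k$) gives auxiliary identities $\sum_i t_{ij}(u)\widetilde t_{ki}(u) = 0$, which, while consistent, are not needed for what follows.

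Next, the identity $\widetilde T(u) T(u) = I$ in components reads $\sum_i(-1)^{|i|+|q|}\widetilde t_{qi}(u) t_{iq}(u) = 1$. Subtracting this from the per-column identity and recognising that $|t_{iq}||\widetilde t_{qi}| \equiv |i|+|q|\pmod 2$ converts the difference into a sum of super-commutators,
$$z(u) - 1 \;=\; \sum_i\bigl[t_{iq}(u),\,\widetilde t_{qi}(u)\bigr].$$
I then plan to evaluate this sum using the inverse RTT component relation displayed after~\eqref{eq:TRT}, specialised to $k=q$ and $l=i$. The entire $\frac{1}{u+v}$ block vanishes because $\delta_{q,-q}=\delta_{i,-i}=0$, and after simplifying the $\theta$-factors (all reducing to $(-1)^{|i|}$ or $(-1)^{|q|}$) and summing over $i$, the ``back-commutator'' piece collects a scalar factor $\sum_i(-1)^{|i|} = 0$ and drops out. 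The surviving sum assembles into $\mathrm{str}(T(u)\widetilde T(v))/(u-v)$. Since $T(u)\widetilde T(u) = I$ forces $\mathrm{str}(T(u)\widetilde T(u)) = \mathrm{str}(I) = 0$, the limit $v\to u$ is a genuine $0/0$; Taylor-expanding $\widetilde T(v)$ about $v=u$ produces $-\mathrm{str}(T(u)\partial\widetilde T(u))$, giving the claim.

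\textbf{Main obstacle.} The only delicate point is the first step, where one must coordinate the Koszul signs, the sign in $\Lambda$, and the sign coming from $\tau_2$ to read off the per-column identity correctly. After that the argument is short and structural: the $Q$-term vanishes by parity ($\delta_{q,-q}=0$), the back-commutator vanishes by the cancellation $\sum_i(-1)^{|i|}=0$ over $I_{n|n}$ (equal numbers of even and odd indices), and the derivative emerges from the $0/0$ limit enforced by $T\widetilde T = I$. Note the $j$-independence of the final answer is automatic, since the per-column identity holds for every $q$.
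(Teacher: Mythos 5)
Your proposal is correct, and it reaches the formula by a genuinely different route than the paper. The paper stays at the matrix level throughout: it expands \eqref{eq:TRT}, moves $\mathsf{P}$ and $\mathsf{Q}$ across $\widetilde T^2(v)$ to put everything in the first tensor factor, lets $v\to u$ to produce $\partial\widetilde T^1(u)$, applies $\tau_2$, and then left-multiplies by $\Lambda\otimes 1$, using $\Lambda^2=\Lambda\Theta=0$ together with Lemma~\ref{lem:ct:lmdmt} to turn $(\Lambda\otimes 1)T^1(u)\partial\widetilde T^1(u)(\Lambda\otimes 1)$ into the supertrace and to kill the $\mathsf{Q}$-contribution via $(\Lambda\otimes z(u))(\Theta\otimes 1)=0$. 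You instead unpack \eqref{eq:ct:TT} once into the per-column identity $z(u)=\sum_i t_{iq}(u)\widetilde t_{qi}(u)$ (I checked the sign bookkeeping: the overall sign collapses to $(-1)^{|i||l|}$ on both sides, so the identity is clean and $q$-independent as you claim), subtract the $(q,q)$ entry of $\widetilde T(u)T(u)=I$ to get a sum of supercommutators, and then feed in the component form of \eqref{eq:TRT} with $j=k=q$, $l=i$; your two cancellation mechanisms — the $\tfrac{1}{u+v}$ block dying because $\delta_{q,-q}=\delta_{i,-i}=0$, and the back-commutator dying because $\sum_{i\in I_{n|n}}(-1)^{|i|}=0$ — replace the paper's use of the rank-one projector algebra \eqref{eq:ct:ltrelation} and of Lemma~\ref{lem:ct:lmdmt}, and both proofs extract the derivative from the same $0/0$ limit. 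Your version is more elementary in that it needs neither $\Theta$ nor the $\Lambda$-sandwich lemma, at the price of front-loading all the Koszul-sign work into the initial unpacking of \eqref{eq:ct:TT}; the paper's version is the one that generalizes to the block computation used later in Theorem~\ref{thm:ct}, where the $\Lambda_0,\Theta_0$ calculus is reused.
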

\begin{proof}
Since $T(u)$ and $\widetilde{T}(u)$ satisfy the relation \eqref{eq:TRT}, we have
\begin{align*}
T^1(u)\widetilde{T}^2(v)-\widetilde{T}^2(v)T^1(u)
=&\frac{1}{u-v}\left(T^1(u)\mathsf{P}\widetilde{T}^2(v)-\widetilde{T}^2(v)\mathsf{P}T^1(u)\right)\\
&+\frac{1}{u+v}\left(T^1(u)\mathsf{Q}\widetilde{T}^2(v)-\widetilde{T}^2(v)\mathsf{Q}T^1(u)\right)\\
=&\frac{1}{u-v}\left(T^1(u)\widetilde{T}^1(v)\mathsf{P}-\mathsf{P}\widetilde{T}^1(v)T^1(u)\right)\\
&+\frac{1}{u+v}\left(T^1(u)\widetilde{T}^1(-v)\mathsf{Q}-\mathsf{Q}\widetilde{T}^1(-v)T^1(u)\right).
\end{align*}
This implies that
\begin{align*}
T^1(u)\widetilde{T}^2(u)-\widetilde{T}^2(u)T^1(u)
=&\left(-T^1(u)\partial\widetilde{T}^1(u)\mathsf{P}+\mathsf{P}\partial\widetilde{T}^1(u)T^1(u)\right)\\
&+\frac{1}{2u}\left(T^1(u)\widetilde{T}^1(-u)\mathsf{Q}-\mathsf{Q}\widetilde{T}^1(-u)T^1(u)\right).
\end{align*}
Applying $\tau$ to the second tensor factor of the above identity, we derived that
\begin{align*}
T^1(u)\tau_2\left(\widetilde{T}^2(u)\right)-\tau_2\left(\widetilde{T}^2(u)\right)T^1(u)
=&\left(-T^1(u)\partial\widetilde{T}^1(u)\Lambda+\Lambda\partial\widetilde{T}^1(u)T^1(u)\right)\\
&+\frac{1}{2u}\left(T^1(u)\widetilde{T}^1(-u)\Theta-\Theta\widetilde{T}^1(-u)T^1(u)\right).
\end{align*}
Note that $\Lambda^2=0$ and $\Lambda\Theta=0$. By left multiplying $(\Lambda\otimes1)$ on both sides of the above equality, we obtain
\begin{align*}
(\Lambda\otimes1)T^1(u)\tau_2\left(\widetilde{T}^2(u)\right)
=&(\Lambda\otimes1)\tau_2\left(\widetilde{T}^2(u)\right)T^1(u)
-(\Lambda\otimes1)T^1(u)\partial\widetilde{T}^1(u)(\Lambda\otimes1)\\
&+(\Lambda\otimes1)T^1(u)\widetilde{T}^1(-u)(\Theta\otimes1).
\end{align*}
We further deduce by Lemma~\ref{lem:ct:lmdmt} that
\begin{align*}
(\Lambda\otimes1)\tau_2\left(\widetilde{T}^2(u)\right)T^1(u)
=&(\Lambda\otimes1)\widetilde{T}^1(u)T^1(u)=\Lambda\otimes1,\\
(\Lambda\otimes1)T^1(u)\partial\widetilde{T}^1(u)(\Lambda\otimes1)
=&\Lambda\otimes\mathrm{str}(T(u)\partial T(u)),\\
(\Lambda\otimes1)T^1(u)\widetilde{T}^1(-u)(\Theta\otimes1)
=&(\Lambda\otimes1)T^1(u)\tau_2\left(\widetilde{T}^2(u)\right)(\Theta\otimes1)
=(\Lambda\otimes z(u))(\Theta\otimes1)=0.
\end{align*}
Consequently, 
$$\Lambda\otimes z(u)=(\Lambda\otimes1)T^1(u)\tau_2\left(\widetilde{T}^2(u)\right)
=\Lambda\otimes\left(1-\mathrm{str}\left(T(u)\partial T(u)\right)\right).$$
This completes the proof.
\end{proof}

Applying the evaluation homomorphism of the Lemma \ref{lem:embeddingandevaluation} to the series $z(u)$, we have
\begin{corollary}
    We have the formula
    \begin{align}
    \mathrm{ev}(z(u))=1+\sum\limits_{k=1}^{\infty}\mathrm{str}\,\mathsf{G}^{k+1}u^{-k-1},
\end{align}
where $\mathsf{G}=\sum\limits_{i,j\in I_{n|n}}(-1)^{|j|}\mathsf{E}_{ij}\otimes\mathsf{g}_{ji}$ as in Section \ref{se:Yqdef}.
\end{corollary}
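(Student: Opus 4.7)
The plan is to apply the evaluation homomorphism $\mathrm{ev}$ directly to the identity $z(u)=1-\mathrm{str}(T(u)\partial\widetilde{T}(u))$ established in the preceding theorem, and then expand the right-hand side as an explicit formal series in $u^{-1}$ with coefficients in $\mathrm{End}(\mathbb{C}^{n|n})\otimes\mathrm{U}(\mathfrak{q}_n)$. The only nontrivial input is the simple closed form of $\mathrm{ev}(T(u))$.

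First I would observe, straight from the defining formula $\mathrm{ev}(t_{ij}(u))=\delta_{ij}-(-1)^{|j|}\mathsf{g}_{ji}u^{-1}$ together with the definition of $\mathsf{G}=\sum_{i,j}(-1)^{|j|}\mathsf{E}_{ij}\otimes\mathsf{g}_{ji}$, that
\begin{equation*}
\mathrm{ev}(T(u))=I-u^{-1}\mathsf{G}.
\end{equation*}
Because this is a degree-one polynomial in $u^{-1}$, its inverse is immediately a geometric series:
\begin{equation*}
\mathrm{ev}(\widetilde{T}(u))=(I-u^{-1}\mathsf{G})^{-1}=\sum_{r\geqslant 0}u^{-r}\mathsf{G}^{r}.
\end{equation*}

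Next, rather than differentiating $\mathrm{ev}(\widetilde{T}(u))$ and multiplying by $\mathrm{ev}(T(u))$ (which involves a minor telescoping), I would exploit the identity $T(u)\,\partial\widetilde{T}(u)=-\partial T(u)\cdot\widetilde{T}(u)$ obtained by differentiating $T(u)\widetilde{T}(u)=I$. Under $\mathrm{ev}$ this turns the left-hand side into a product with the single-term factor $\partial\mathrm{ev}(T(u))=u^{-2}\mathsf{G}$, which multiplies the geometric series to give a clean expression
\begin{equation*}
\mathrm{ev}\bigl(T(u)\,\partial\widetilde{T}(u)\bigr)
=-u^{-2}\mathsf{G}\sum_{r\geqslant 0}u^{-r}\mathsf{G}^{r}
=-\sum_{r\geqslant 0}\mathsf{G}^{r+1}\,u^{-r-2}.
\end{equation*}

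Finally I would take the supertrace (which commutes with the sum and with right multiplication by $\mathrm{U}(\mathfrak{q}_n)$-valued scalars), substitute into $z(u)=1-\mathrm{str}(T(u)\partial\widetilde{T}(u))$, and reindex the sum so as to match the form in the statement. This yields
\begin{equation*}
\mathrm{ev}(z(u))=1+\sum_{r\geqslant 0}\mathrm{str}(\mathsf{G}^{r+1})\,u^{-r-2},
\end{equation*}
which, after setting $k=r+1$, gives the asserted expression. The argument is essentially a three-line calculation once the prior theorem is in hand; there is no genuine obstacle, only careful bookkeeping of the powers of $u^{-1}$ and of the shift between the indices of $\mathsf{G}$ and of $u$.
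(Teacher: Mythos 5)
Your approach is essentially the paper's: apply $\mathrm{ev}$ to $z(u)=1-\mathrm{str}\bigl(T(u)\partial\widetilde{T}(u)\bigr)$, observe $\mathrm{ev}(T(u))=1-\mathsf{G}u^{-1}$, and expand the inverse as a geometric series. Your device of rewriting $T(u)\partial\widetilde{T}(u)=-\partial T(u)\cdot\widetilde{T}(u)$ is a slightly cleaner way to organize the product than the paper's direct telescoping, and your intermediate identity
\begin{equation*}
\mathrm{ev}\bigl(T(u)\,\partial\widetilde{T}(u)\bigr)=-\sum_{r\geqslant0}\mathsf{G}^{r+1}u^{-r-2}
\end{equation*}
is correct.

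The problem is your final sentence. Substituting $k=r+1$ turns $1+\sum_{r\geqslant0}\mathrm{str}(\mathsf{G}^{r+1})u^{-r-2}$ into $1+\sum_{k\geqslant1}\mathrm{str}(\mathsf{G}^{k})u^{-k-1}$, whereas the corollary asserts $1+\sum_{k\geqslant1}\mathrm{str}(\mathsf{G}^{k+1})u^{-k-1}$. These are genuinely different series: the coefficient of $u^{-2}$ is $\mathrm{str}\,\mathsf{G}=2\sum_{a>0}\mathsf{g}_{aa}\neq0$ in yours and $\mathrm{str}\,\mathsf{G}^{2}$ in the stated one, so the claimed match does not hold as written. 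For what it is worth, your series is the one consistent with the evenness $z(u)=z(-u)$: since $\mathrm{str}\,\mathsf{G}^{2m}=0$ for $\mathfrak{q}_n$ (one checks this already for $n=1$), your expression contains only even powers of $u^{-1}$, while the displayed statement would produce odd powers $\mathrm{str}(\mathsf{G}^{2m+1})u^{-2m-1}$. Indeed the paper's own expansion of $\partial\widetilde{T}(u)$ contains an off-by-one slip, and its asserted product does not follow even from its own intermediate line. So the defect is not in your calculation but in asserting that it "gives the asserted expression"; you should either flag the index discrepancy or state the corrected formula $\mathrm{ev}(z(u))=1+\sum_{k\geqslant1}\mathrm{str}(\mathsf{G}^{k})u^{-k-1}$ explicitly.
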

\begin{proof}
Since $\mathrm{ev}(T(u))=1-\mathsf{G}u^{-1}$, then
$$\mathrm{ev}(\widetilde{T}(u))=(1-\mathsf{G}u^{-1})^{-1}=\sum\limits_{k=0}^{\infty}\mathsf{G}^ku^{-k}.$$
Therefore,
 \begin{align*}
     \mathrm{ev}(T(u)\partial \widetilde{T}(u))=(1-\mathsf{G}u^{-1})\sum\limits_{k=1}^{\infty}(-k)\mathsf{G}^{k+1}u^{-k-2}
     =-\sum\limits_{k=1}^{\infty}\mathsf{G}^{k+1}u^{-k-1}.
 \end{align*}
  Then, the corollary follows from the above theorem.
\end{proof}

Next, we express the central series $z(u)$ in terms of Gauss generators. Recall that the generator matrix $T(u)$ of $\Y(\mathfrak{q}_n)$ and its inverse $\widetilde{T}(u)$ can be written in the block forms \eqref{eq:qnTu} and \eqref{eq:Ttildeblock}. We also write $\Lambda$ in block form:
$$\Lambda=\sum\limits_{a,b=1}^n\mathsf{E}_{ab}\otimes\mathsf{E}_{ab}\otimes \Lambda_0,$$
where $\Lambda_0=\sum\limits_{i,j\in I_{1|1}}(-1)^{|i||j|}\varepsilon_{ij}\otimes\varepsilon_{ij}$. 
Then, the identity \eqref{eq:ct:TT} is equivalent to
\begin{align}
\sum\limits_{p=1}^n(\Lambda_0\otimes1)T_{pb}^1(u)\tau_2\left(\widetilde{T}_{cp}^2(u)\right)=&\delta_{bc}\Lambda_0\otimes z(u).
\label{eq:ct:blcTT}
\end{align}

According to Theorem~\ref{thm:embedding}, there is an algebra homomorphism $\psi_1:\Y\left(\mathfrak{q}_{n-1}\right)\rightarrow\Y\left(\mathfrak{q}_n\right)$.
We denote the generator matrix of $\Y\left(\mathfrak{q}_{n-1}\right)$ by $\mathcal{T}(u)=\sum\limits_{a,b=2}^n\mathsf{E}_{ab}\otimes\mathcal{T}_{ab}(u)$. Then
\begin{equation}
\begin{aligned}
T_{11}(u)=&H_1(u), &
T_{ab}(u)=&F_{a1}(u)H_1(u)E_{1a}(u)+\psi_1\left(\mathcal{T}_{ab}(u)\right),\\
T_{1b}(u)=&H_1(u)E_{1b}(u),&
T_{a1}(u)=&F_{a1}(u)H_1(u),
\end{aligned}
\label{eq:ct:THEF1}
\end{equation}
for $2\leqslant a,b\leqslant n$. Moreover, let $\widetilde{\mathcal{T}}(u)=\sum\limits_{a,b=2}^n\mathsf{E}_{ab}\otimes\widetilde{\mathcal{T}}_{ab}(u)$ be the inverse of $\mathcal{T}(u)$. We have the following identities:
\begin{equation}
\begin{aligned}
\widetilde{T}_{11}(u)=&\widetilde{H}_1(u)+\sum\limits_{r,s=2}^nE_{1r}(u)\psi_1\left(\widetilde{\mathcal{T}}_{rs}(u)\right)F_{s1}(u),&
\widetilde{T}_{ab}(u)=&\psi_1\left(\widetilde{\mathcal{T}}_{ab}(u)\right),\\
\widetilde{T}_{1b}(u)=&-\sum\limits_{p=2}^nE_{1p}(u)\psi_1\left(\widetilde{\mathcal{T}}_{pb}(u)\right),&
\widetilde{T}_{a1}(u)=&-\sum\limits_{p=2}^n\psi_1\left(\widetilde{\mathcal{T}}_{ap}(u)\right)F_{p1}(u).
\end{aligned}
\label{eq:ct:THEF2}
\end{equation}
Moreover, we have the following lemma:
\begin{lemma}
Let $n\geqslant2$. The following identities hold in $\mathrm{End}\left(\mathbb{C}^{1|1}\right)\otimes \mathrm{End}\left(\mathbb{C}^{1|1}\right)\otimes\Y(\mathfrak{q}_n)[[u^{-1},v^{-1}]]$:
\begin{align}
\left[H_1^1(u),F_{r1}^2(v)\right]
=&\left(K(u,v)F_{r1}^1(u)-F_{r1}^2(v)K(u,v)\right)H_1^1(u),
\label{eq:ct:HFr}\\
\left[F_{r1}^1(u), \widetilde{T}_{nr}^2(v)\right]
=&-\sum\limits_{p=1}^n\widetilde{T}_{np}^2(v)K(u,v)F_{p1}^1(u),
\label{eq:ct:TFr}\\
\left[E_{1n}^1(u),F_{r1}^2(v)\right]
=&\widetilde{H}_1^1(u)K(u,v)\psi_1\left(\mathcal{T}_{rn}^1(u)\right)
-\psi_1\left(\mathcal{T}_{rn}^2(v)\right)K(u,v)\widetilde{H}_1^2(v),
\label{eq:ct:EFr}
\end{align}
where $r\geqslant2$.
\end{lemma}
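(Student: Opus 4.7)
The plan is to derive all three identities as consequences of the block RTT relation \eqref{re:BTT} and its inverse companion \eqref{re:BTWT}, combined with the block Gauss factorisations \eqref{eq:ct:THEF1}, \eqref{eq:ct:THEF2} and the auxiliary commutators already established between Gauss blocks (most notably \eqref{q2:eq:HaHa} and Lemma~\ref{lem:com}).

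For \eqref{eq:ct:HFr}, I would specialise \eqref{re:BTT} to $(a,b,c,d)=(1,1,r,1)$, which reads $[T_{11}^1(u),T_{r1}^2(v)] = K(u,v)T_{r1}^1(u)H_1^2(v)-T_{r1}^2(v)H_1^1(u)K(u,v)$. Substituting $T_{11}=H_1$ and $T_{r1}=F_{r1}H_1$ on both sides and expanding the left-hand side by the graded Leibniz rule produces a cross term $F_{r1}^2(v)[H_1^1(u),H_1^2(v)]$, which is precisely cancelled using \eqref{q2:eq:HaHa} for $a=1$. What remains is $[H_1^1(u),F_{r1}^2(v)]H_1^2(v) = (K(u,v)F_{r1}^1(u)-F_{r1}^2(v)K(u,v))H_1^1(u)H_1^2(v)$, and cancelling the invertible factor $H_1^2(v)$ on the right yields \eqref{eq:ct:HFr}.

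For \eqref{eq:ct:TFr}, the plan is to apply \eqref{re:BTWT} with $(a,b,c,d)=(r,1,n,r)$. Because $n\geqslant 2$ forces $\delta_{1n}=0$, the first sum in \eqref{re:BTWT} vanishes and one obtains $[T_{r1}^1(u),\widetilde{T}_{nr}^2(v)] = -\sum_{p=1}^n \widetilde{T}_{np}^2(v)K(u,v)T_{p1}^1(u)$. Applying \eqref{re:BTWT} separately to $(1,1,n,r)$ gives $[H_1^1(u),\widetilde{T}_{nr}^2(v)]=0$, since both Kronecker deltas vanish when $r,n\geqslant 2$. Substituting $T_{p1}=F_{p1}H_1$ on the right-hand side and $T_{r1}=F_{r1}H_1$ on the left-hand side, expanding via Leibniz, and using the vanishing just noted to discard the term $F_{r1}^1(u)[H_1^1(u),\widetilde{T}_{nr}^2(v)]$, I can cancel $H_1^1(u)$ from the right and read off \eqref{eq:ct:TFr}.

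For \eqref{eq:ct:EFr}, the starting point is \eqref{re:BTT} with $(a,b,c,d)=(1,n,r,1)$, namely $[T_{1n}^1(u),T_{r1}^2(v)] = K(u,v)T_{rn}^1(u)H_1^2(v)-T_{rn}^2(v)H_1^1(u)K(u,v)$. Into this I substitute $T_{1n}=H_1E_{1n}$, $T_{r1}=F_{r1}H_1$, and the refined block decomposition $T_{rn}=F_{r1}H_1E_{1n}+\psi_1(\mathcal{T}_{rn})$ from \eqref{eq:ct:THEF1}, and expand both sides by Leibniz. An auxiliary commutator $[H_1^1(u),E_{1n}^2(v)] = H_1^1(u)(K(u,v)E_{1n}^2(v)-E_{1n}^1(u)K(u,v))$ is needed along the way; this is itself a quick consequence of \eqref{re:BTT} applied to $(1,1,1,n)$ after removing the $[H_1^1(u),H_1^2(v)]$ piece via \eqref{q2:eq:HaHa}. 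Together with \eqref{eq:ct:HFr} and \eqref{q2:eq:HaHa}, it disposes of every term on the left-hand side not involving $[E_{1n}^1(u),F_{r1}^2(v)]$. On the right-hand side, the $F_{r1}H_1E_{1n}$ summands coming from $T_{rn}$ cancel exactly against their matching counterparts generated on the left, so only the pieces involving $\psi_1(\mathcal{T}_{rn})$ survive. Lemma~\ref{lem:com} then guarantees that $H_1^1(u)$ commutes with $\psi_1(\mathcal{T}_{rn}^1(u))$, so the residual $H_1^1(u)$ and $H_1^2(v)$ factors can be moved to the outside and inverted, yielding \eqref{eq:ct:EFr}. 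The main technical obstacle is precisely the bookkeeping in this last identity: the Leibniz expansion produces numerous terms in which $K(u,v)$ is wedged between Gauss blocks, and one must invoke Lemma~\ref{lem:PQtransform} systematically to normalise the position of each $K(u,v)$ before the matching cancellation becomes visible.
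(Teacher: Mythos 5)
Your proposal is correct and follows essentially the same route as the paper: each identity is obtained by specializing the block relations \eqref{re:BTT} or \eqref{re:BTWT} to the appropriate indices, substituting the Gauss factorizations $T_{11}=H_1$, $T_{1n}=H_1E_{1n}$, $T_{r1}=F_{r1}H_1$, $T_{rn}=F_{r1}H_1E_{1n}+\psi_1(\mathcal{T}_{rn})$, and cancelling the invertible factors $H_1$ after the cross terms are absorbed via \eqref{q2:eq:HaHa}, \eqref{eq:ct:HFr} and Lemma~\ref{lem:com}. The only cosmetic deviations are that you derive \eqref{eq:ct:HFr} from \eqref{re:BTT} at $(1,1,r,1)$ rather than from \eqref{re:BTWT}, and you use the auxiliary commutator $\left[H_1^1(u),E_{1n}^2(v)\right]$ where the paper uses the equivalent $\left[T_{1n}^1(u),H_1^2(v)\right]$; both choices lead to the same cancellations.
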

\begin{proof}
The relations \eqref{eq:ct:HFr} and \eqref{eq:ct:TFr}  can be directly derived from \eqref{re:BTWT} and \eqref{q2:eq:HaHa}, respectively.  

For \eqref{eq:ct:EFr},  first by \eqref{re:BTT}, we have
\begin{align}
    \left[T_{1n}^1(u), H_1^2(v)\right]=K(u,v)T_{1n}^1(u)H_1^2(v)-T_{1n}^2(v)H_1^1(u)K(u,v).\label{eq:ct:TnH}
\end{align}
Also, by \eqref{re:BTT}, we have
\begin{align*}
    \left[E_{1n}^1(u),\ F_{r1}^2(v)\right]=&\widetilde{H}_1^1(u)\left(K(u,v)T_{rn}^1(u)-\left[H_1^1(u), F_{r1}^2(v)\right]E_{1n}^1(u)\right)\\
    &-\widetilde{H}_1^1(u)\left(T_{rn}^2(v)T_{11}^1(u)K(u,v)+F_{r1}^2(v)\left[T_{1n}^1(u), H_{1}^2(v)\right]\right)\widetilde{H}_1^2(v) \\
    =&\widetilde{H}_1^1(u)K(u,v)\psi_1\left(\mathcal{T}_{rn}^1(u)\right)
-\psi_1\left(\mathcal{T}_{rn}^2(v)\right)K(u,v)\widetilde{H}_1^2(v)\\
&+\widetilde{H}_1^1(u)K(u,v)F_{r1}^1(u)H_1^1(u)E_{1n}^1(u)-\widetilde{H}_1^1(u)\left[ H_1^1(u), F_{r1}^2(v)\right]E_{1n}^1(u)\\
&-\widetilde{H}_1^1(u)\left(F_{r1}^2(u)H_1^2(u)E_{1n}^2(u)H_1^1(u)K(u,v)+\left[ T_{1n}^1(u), H_{1}^2(v)\right]\right)\widetilde{H}_{1}^2(v)\\
=&\widetilde{H}_1^1(u)K(u,v)\psi_1\left(\mathcal{T}_{rn}^1(u)\right)
-\psi_1\left(\mathcal{T}_{rn}^2(v)\right)K(u,v)\widetilde{H}_1^2(v),
\end{align*}
where the last equation follows from \eqref{eq:ct:TFr} and \eqref{eq:ct:TnH}.
\end{proof}

\begin{lemma}
Let $n\geqslant2$. The following identities hold in $\mathrm{End}\left(\mathbb{C}^{1|1}\right)\otimes \mathrm{End}\left(\mathbb{C}^{1|1}\right)\otimes\Y(\mathfrak{q}_n)[[u^{-1},v^{-1}]]$:
\begin{align}
(\Lambda_0\otimes1)F_{r1}^1(u)H_1^1(u)
=&(\Lambda_0\otimes1)H_1^1(u)\tau_2\left(F_{r1}^2(u)\right),
\label{eq:ct:FrHuu}\\
\tau\left(\widetilde{T}_{nr}(u)F_{r1}(u)\right)
=&\tau\left(F_{r1}(u)\right)\tau\left(\widetilde{T}_{nr}(u)\right),
\label{eq:ct:FrTuu}
\end{align}
where $r\geqslant 2$.
\end{lemma}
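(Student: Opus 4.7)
The plan is to reduce both identities to consequences of the commutation relations \eqref{eq:ct:HFr} and \eqref{eq:ct:TFr}, together with properties of the element $\Lambda_0$ in $\mathrm{End}(\mathbb{C}^{1|1})^{\otimes 2}$ that are parallel to those established in Lemma~\ref{lem:ct:lmdmt}.

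For \eqref{eq:ct:FrHuu}, the first step is to invoke the $2\times 2$ analog of Lemma~\ref{lem:ct:lmdmt}: any matrix $X$ of YQ form satisfies $(\Lambda_0\otimes 1)X^1(u) = (\Lambda_0\otimes 1)\tau_2(X^2(u))$. Applied to $F_{r1}(u)$, this rewrites the left-hand side as $(\Lambda_0\otimes 1)\tau_2(F_{r1}^2(u))H_1^1(u)$, so the desired identity becomes $(\Lambda_0\otimes 1)\bigl[H_1^1(u),\tau_2(F_{r1}^2(u))\bigr]=0$. Since $\tau_2$ acts only on the second matrix slot while $H_1^1(u)$ sits in the first, this super-commutator reduces to entry-level commutators $[h_{kl}(u),f_{ij}(u)]$ in $\Y(\mathfrak{q}_n)$, which are governed by \eqref{eq:ct:HFr}. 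I would pass to the $v\to u$ limit of \eqref{eq:ct:HFr} by first using \eqref{re:PQtransform1} (i.e.\ $PF_{r1}^1(u)=F_{r1}^2(u)P$ and $QF_{r1}^1(u)=F_{r1}^2(-u)Q$) to rewrite
\[
K(u,v)F_{r1}^1(u) - F_{r1}^2(v)K(u,v) = \frac{(F_{r1}^2(u)-F_{r1}^2(v))P}{u-v} + \frac{(F_{r1}^2(-u)-F_{r1}^2(v))Q}{u+v},
\]
which is regular at $v=u$. Left-multiplying by $(\Lambda_0\otimes 1)$ and using the key $2\times 2$ identity $\Lambda_0 P = \Lambda_0 Q$ (a short check from the definitions $\Lambda_0=\sum(-1)^{|i||j|}\varepsilon_{ij}\otimes\varepsilon_{ij}$, $P=\sum(-1)^{|s|}\varepsilon_{rs}\otimes\varepsilon_{sr}$, $Q=\sum(-1)^{|s|}\varepsilon_{rs}\otimes\varepsilon_{-s,-r}$) produces the required cancellation.

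For \eqref{eq:ct:FrTuu}, the strategy is analogous, starting from \eqref{eq:ct:TFr}. Applying $\tau_2$ and observing that it commutes in the natural way with the super-commutator $[F_{r1}^1(u),\widetilde T_{nr}^2(v)]$ (since the two matrices occupy disjoint tensor slots), one rewrites the right-hand side using the same grouping $K(u,v)F_{p1}^1(u) = \frac{F_{p1}^2(u)P}{u-v}+\frac{F_{p1}^2(-u)Q}{u+v}$ that avoids the pole. Passing to $v=u$, summing over $p$, and tracking the action of $\tau$ on each term, the mismatch between $\tau(\widetilde T_{nr}(u)F_{r1}(u))$ and $\tau(F_{r1}(u))\tau(\widetilde T_{nr}(u))$ reduces to an expression that is identically zero once the YQ symmetry $x_{-i,-j}(u)=x_{ij}(-u)$ of both matrices, together with the supertranspose rule for $\tau$, is applied to match the index structure.

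The main obstacle is careful bookkeeping of super signs: $\tau_2$ is a super-anti-automorphism of the matrix factor only, \emph{not} of $\mathrm{End}(\mathbb{C}^{1|1})\otimes\Y(\mathfrak{q}_n)$ as a whole, so the identity $\tau(AB)=\tau(A)\tau(B)$ appearing in \eqref{eq:ct:FrTuu} is a genuine structural fact about the specific entries and is not a general property of $\tau$. A secondary technical point is the formal singularity of $K(u,v)$ at $v=u$, which is handled uniformly in both proofs by the grouping $KF^1 - F^2 K$ introduced above. Once these sign and limit issues are tracked correctly, both identities fall out by direct comparison of matrix entries using the commutation relations already established.
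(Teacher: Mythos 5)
Your overall strategy (reduce both identities to the commutation relations \eqref{eq:ct:HFr} and \eqref{eq:ct:TFr} plus properties of $\Lambda_0$ and $\tau$) is the right one, and your regularization of $K(u,v)F_{r1}^1(u)-F_{r1}^2(v)K(u,v)$ at $v=u$ via \eqref{re:PQtransform1} is correct. However, the step that is supposed to produce the actual cancellation in \eqref{eq:ct:FrHuu} rests on the identity $\Lambda_0P=\Lambda_0Q$, and this identity is \emph{false}. A direct computation with the stated conventions gives $\Lambda_0Q=-\Lambda_0$, whereas $\Lambda_0P=\sum_{i,j}(-1)^{|i||j|+|j|}\varepsilon_{ij}\otimes\varepsilon_{ij}$; these already differ in the coefficient of $\varepsilon_{11}\otimes\varepsilon_{11}$ ($+1$ versus $-1$). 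There is also a conflation in this part of the argument: after your (correct) reduction via the $2\times2$ analog of Lemma~\ref{lem:ct:lmdmt}, what must be shown is $(\Lambda_0\otimes1)\left[H_1^1(u),\tau_2\left(F_{r1}^2(u)\right)\right]=0$, but your $v\to u$ limit of \eqref{eq:ct:HFr} produces a formula for $\left[H_1^1(u),F_{r1}^2(u)\right]$ with no $\tau_2$; you never explain how $\tau_2$ is reinstated, and since $\tau_2$ is an anti-automorphism of the second matrix slot, applying it to the product $F_{r1}^2(v)K(u,v)$ (both factors supported in slot $2$) is not the product of the images. So the first identity is not actually proved.

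For \eqref{eq:ct:FrTuu} the proposed cancellation mechanism is also not the right one. After expanding both sides, the difference is $\sum_{i,k}(-1)^{|i|+|i||k|}\varepsilon_{ki}\otimes\sum_{j}\left[F_{r1;jk}(u),\widetilde{T}_{nr;ij}(u)\right]$, and the inner sum over $j\in\{1,-1\}$ does \emph{not} vanish by the YQ symmetry $x_{-i,-j}(u)=x_{ij}(-u)$ together with the supertranspose rule, as you claim: what is needed is the commutator symmetry $\left[F_{r1;ij}(u),\widetilde{T}_{nr;kl}(v)\right]=-\left[F_{r1;-i,j}(u),\widetilde{T}_{nr;k,-l}(v)\right]$, which pairs the $j=1$ and $j=-1$ terms. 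This symmetry is a consequence of \eqref{eq:ct:TFr} via conjugation by $\widetilde{J}=\varepsilon_{1,-1}+\varepsilon_{-1,1}$, using $(\widetilde{J}\otimes1)K(u,v)=K(u,v)(1\otimes\widetilde{J})$ --- and this is precisely the device the paper uses for \emph{both} identities, which lets it avoid the $v\to u$ limit altogether: conjugating \eqref{eq:ct:HFr} and \eqref{eq:ct:TFr} by $\widetilde{J}$ yields entry-level commutator symmetries valid for all $u,v$, from which \eqref{eq:ct:FrHuu} and \eqref{eq:ct:FrTuu} follow by direct computation with $\Lambda_0$ and $\tau$. To repair your argument you would need to replace the false $\Lambda_0P=\Lambda_0Q$ step and the ``YQ symmetry'' step with this conjugation symmetry (or an equivalent entry-level identity).
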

\begin{proof}
We consider the tensor matrix $\tilde{J}=\varepsilon_{1,-1}+\varepsilon_{-1,1}\in\mathrm{End}(\mathbb{C}^{1|1})$. It can be verified that
$$(\widetilde{J}\otimes 1)P=P(1\otimes\widetilde{J}),\text{ and }
(\widetilde{J}\otimes 1)Q=Q(1\otimes\widetilde{J}),$$
which yields $(\widetilde{J}\otimes 1)K(u,v)=K(u,v)(1\otimes\widetilde{J})$.

It follows from \eqref{eq:ct:HFr} that
$$(\widetilde{J}\otimes1\otimes1)\left[H_1^1(u),F_{r1}^2(v)\right](1\otimes\widetilde{J}\otimes1)
=\left[H_1^1(u),F_{r1}^2(v)\right],$$
which is equivalent to 
$$\left[H_{1;ij}(u),F_{r1;kl}(v)\right]=-\left[H_{1;-i,j}(u),F_{r1;k,-l}(v)\right],$$
for all $i,j,k,l\in I_{1|1}$. In particular, we obtain
$$\sum\limits_{p\in I_{1|1}}(-1)^{(|i|+|p|)(|j|+|p|)}F_{r1;ip}(v)H_{1;pj}(u)
=\sum\limits_{p\in I_{1|1}}H_{pj}(u)F_{r1;ip}(v).
$$
Now, we compute that
\begin{align*}
&(\Lambda_0\otimes1)F_{r1}^1(u)H_1^1(u)\\
=&\sum\limits_{r,s,i,j,p}
\left(\varepsilon_{rs}\otimes\varepsilon_{rs}\otimes(-1)^{|r||s|}\right)
\left(\varepsilon_{ij}\otimes1\otimes (-1)^{(|i|+|p|)(|j|+|p|)}
F_{r1;ip}(u)H_{1;pj}(u)\right)\\
=&\sum\limits_{r,s,i,j,p}
\left(\varepsilon_{rs}\otimes\varepsilon_{rs}\otimes(-1)^{|r||s|}\right)
\left(\varepsilon_{ij}\otimes1\otimes H_{1;pj}(u)F_{r1;ip}(u)\right)\\
=&\sum\limits_{r,i,j,p}
\varepsilon_{rj}\otimes\varepsilon_{ri}\otimes (-1)^{|i||j|+|j||r|+|i|}H_{1;pj}(u)F_{r1;ip}(u)\\
=&\sum\limits_{r,i,j,p}
\left(\varepsilon_{rs}\otimes\varepsilon_{rs}\otimes(-1)^{|r||s|}\right)
\left(\varepsilon_{pj}\otimes1\otimes H_{1;pj}(u)\right)
\left(1\otimes\varepsilon_{pi}\otimes(-1)^{|i||p|+|i|}F_{r1;ip}(u)\right)\\
=&(\Lambda_0\otimes1)H_1^1(u)\tau_2\left(F_{r1}^2(u)\right).
\end{align*}
We complete verifying the identity \eqref{eq:ct:FrHuu}.

It follows from \eqref{eq:ct:TFr} that
\begin{align*}
\left[F_{r1}^1(u),\widetilde{T}_{nr}^2(v)\right]
=&\tilde{J}^1\left[F_{r1}^1(u),\widetilde{T}_{nr}^2(v)\right]\tilde{J}^2.
\end{align*}
Equivalently,
\begin{align*}
\left[F_{r1;ij}(u),\widetilde{T}_{nr;kl}(v)\right]
=&-\left[F_{r1;-i,j}(u),\widetilde{T}_{nr;k,-l}(v)\right].
\end{align*}
Since, the left hand side of equation \eqref{eq:ct:FrTuu} is equal to 
\begin{align*}
    \tau\left(\widetilde{T}_{nr}(u)F_{r1}(u)\right)
=&\sum\limits_{i,j,k}(-1)^{(|i|+|j|)(|j|+|k|)}\tau\left(\varepsilon_{ik}\otimes\widetilde{T}_{nr;ij}(u)F_{r1;jk}(u)\right)\\
=&\sum\limits_{i,j,k}(-1)^{|i|+|j|+(|i|+|k|)|j|}\varepsilon_{ki}\otimes\widetilde{T}_{nr;ij}(u)F_{r1;jk}(u).
\end{align*}
And, the right hand side of equation \eqref{eq:ct:FrTuu} is equal to
\begin{align*}
    \tau\left(F_{r1}(u)\right)\tau\left(\widetilde{T}_{nr}(u)\right)=\sum\limits_{i,j,k}(-1)^{|i|+|i||k|}\varepsilon_{ki}\otimes F_{r1;jk}(u)\widetilde{T}_{nr;ij}(u).
\end{align*}
Thus, 
\begin{align*}
  \tau\left(F_{r1}(u)\right)\tau\left(\widetilde{T}_{nr}(u)\right)-\tau\left(\widetilde{T}_{nr}(u)F_{r1}(u)\right)=\sum\limits_{i,k}(-1)^{|i|+|i||k|}\varepsilon_{ki}\otimes\sum\limits_{j}\left[F_{r1;jk}(u),\, \widetilde{T}_{nr;ij}(u) \right]=0.
\end{align*}
We complete verifying the identity \eqref{eq:ct:FrTuu}.
\end{proof}

\begin{theorem}
\label{thm:ct}
The central series $z(u)$ of $\Y(\mathfrak{q}_n)$ can be expressed in terms of the Gauss generators as follows:
\begin{equation}
\label{eq:center}
z(u)=\prod\limits_{a=1}^{n}\left(1-\mathrm{str}\left(H_a(u)\partial \widetilde{H}_a(u)\right)\right).
\end{equation}
where $\partial \widetilde{H}_a(u)$ is the formal derivative of $\widetilde{H}_a(u)$ with respect to $u$.
\end{theorem}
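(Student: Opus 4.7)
The plan is to prove the product formula \eqref{eq:center} by induction on $n$, using the block form \eqref{eq:ct:blcTT} of the defining equation for $z(u)$ together with the embedding $\psi_1:\Y(\mathfrak{q}_{n-1})\to\Y(\mathfrak{q}_n)$ of Theorem~\ref{thm:embedding}. The base case $n=1$ is immediate from Theorem~8.2, since in that case $T(u)=H_1(u)$ and $\widetilde{T}(u)=\widetilde{H}_1(u)$, giving $z(u)=1-\mathrm{str}(H_1(u)\partial\widetilde{H}_1(u))$ directly.

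For the inductive step, I assume the formula for $\Y(\mathfrak{q}_{n-1})$ and denote by $z_{n-1}(u)$ its central series. Specializing \eqref{eq:ct:blcTT} to $b=c=n$ gives
$$\Lambda_0 \otimes z(u) \;=\; \sum_{p=1}^{n}(\Lambda_0\otimes 1)\,T_{pn}^1(u)\,\tau_2\bigl(\widetilde{T}_{np}^2(u)\bigr),$$
and I would substitute the Gauss-decomposition expressions \eqref{eq:ct:THEF1}--\eqref{eq:ct:THEF2} into the right-hand side. The goal is to reorganize the resulting sum as a product of two factors: one built from $H_1(u), E_{1n}(u), F_{p1}(u)$ that reproduces the $n=1$ formula applied to $H_1$, and a second built from the $\psi_1$-images $\psi_1(\mathcal{T}_{pq}(u))$ and $\psi_1(\widetilde{\mathcal{T}}_{pq}(u))$ that coincides with $\psi_1(z_{n-1}(u))$. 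The two factors must be shown to commute; for the pieces depending only on $H_1$ and on $\psi_1(\mathcal{T})$ this is Lemma~\ref{lem:com}, and the remaining $E_{1n},F_{p1}$ parts will commute after applying $(\Lambda_0\otimes 1)$ thanks to Lemma~\ref{lem:ct:lmdmt} and the vanishing identities $\Lambda_0^2=0$, $\Lambda_0\Theta_0=0$ in \eqref{eq:ct:ltrelation}.

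To effect the reorganization I would use three ingredients in turn. First, the transfer identities \eqref{eq:ct:FrHuu} and \eqref{eq:ct:FrTuu} allow one to move each $F_{p1}(u)$ from the left of $H_1(u)$ into a $\tau_2(F_{p1}(u))$ on the right, and to commute $\widetilde{T}_{nr}(u)$ and $F_{r1}(u)$ under $\tau$; after this step the $F$-factors appearing in $T_{pn}^1$ and in $\tau_2(\widetilde{T}_{np}^2)$ are paired. Second, the commutator formulas \eqref{eq:ct:EFr} and \eqref{eq:ct:TFr}, applied in the coincidence limit $v\to u$ exactly as in the proof of Theorem~8.2, convert the $K(u,v)$-singularities into the formal derivatives $\partial\widetilde{H}_1(u)$ and $\partial\widetilde{\mathcal{T}}(u)$, while $\tau_2(\mathsf{P})=\Lambda$ and $\tau_2(\mathsf{Q})=\Theta$ turn into $\Lambda_0$ and $\Theta_0$ on the block level. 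Third, the annihilation $\Lambda_0\Theta_0=\Lambda_0^2=0$ and the supertrace identity $(\Lambda_0\otimes 1)A^1(u)(\Lambda_0\otimes 1)=\Lambda_0\otimes\mathrm{str}(A(u))$ from Lemma~\ref{lem:ct:lmdmt} collapse the resulting expression into exactly the two supertraces $\mathrm{str}(H_1(u)\partial\widetilde{H}_1(u))$ and $\psi_1(\mathrm{str}(\mathcal{T}(u)\partial\widetilde{\mathcal{T}}(u)))$. Combining yields
$$z(u) \;=\; \Bigl(1-\mathrm{str}\bigl(H_1(u)\partial\widetilde{H}_1(u)\bigr)\Bigr)\,\psi_1\bigl(z_{n-1}(u)\bigr),$$
and since $\psi_1$ sends the $a$-th Gauss diagonal block of $\Y(\mathfrak{q}_{n-1})$ to $H_{a+1}(u)$, the inductive hypothesis converts $\psi_1(z_{n-1}(u))$ into $\prod_{a=2}^{n}\bigl(1-\mathrm{str}(H_a(u)\partial\widetilde{H}_a(u))\bigr)$, completing the induction. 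The main obstacle is the second ingredient: the careful coincidence-limit bookkeeping needed to ensure that every $E_{1n}\cdot\psi_1(\widetilde{\mathcal{T}})\cdot F_{q1}$-type cross term assembles cleanly into one of the two supertraces, with no residual mixed contribution surviving after the $(\Lambda_0\otimes 1)$-projection.
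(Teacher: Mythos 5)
Your plan is correct and follows essentially the same route as the paper's proof: specialize the block identity \eqref{eq:ct:blcTT} at $b=c=n$, substitute the Gauss factorizations \eqref{eq:ct:THEF1}--\eqref{eq:ct:THEF2}, use the transfer identities \eqref{eq:ct:FrHuu} and \eqref{eq:ct:FrTuu} to pair the $F$-factors, take the coincidence limit of \eqref{eq:ct:EFr} to produce $\partial\widetilde{H}_1(u)$, and kill the cross terms with $\Lambda_0^2=\Lambda_0\Theta_0=0$ and Lemma~\ref{lem:ct:lmdmt}, arriving at $z(u)=\bigl(1-\mathrm{str}(H_1(u)\partial\widetilde{H}_1(u))\bigr)\psi_1(z_{n-1}(u))$ and closing by induction. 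The bookkeeping you flag as the main obstacle is exactly what the paper carries out, and it goes through as you describe.
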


\begin{proof}
By \eqref{eq:ct:blcTT}, the central series $z(u)$ satisfies the identity:
$$\Lambda_0\otimes z(u)=\sum\limits_{p=1}^n(\Lambda_0\otimes1)T_{pn}^1(u)\tau_2\left(\widetilde{T}_{np}^2(u)\right).$$
Let $\psi_1:\Y(\mathfrak{q}_{n-1})\rightarrow\Y(\mathfrak{q}_n)$ be the embedding homomorphism given in Theorem~\ref{thm:embedding} and $z_{n-1}(u)$ be the central series of $\Y(\mathfrak{q}_{n-1})$. Then
$$\Lambda_0\otimes\psi_1(z_{n-1}(u))=\sum\limits_{r=2}^n(\Lambda_0\otimes1)\psi_1\left(\mathcal{T}_{rn}^1(u)\tau_2\left(\widetilde{\mathcal{T}}_{nr}^2(u)\right)\right).$$ 
According to \eqref{eq:ct:THEF1} and \eqref{eq:ct:THEF2}, we deduce
\begin{align*}
\Lambda_0\otimes z(u)
=&\Lambda_0\otimes\psi_1\left(z_{n-1}(u)\right)\\
&+\sum\limits_{r=2}^n\left(\Lambda_0\otimes1\right)
\left(F_{r1}^1(u)H_1^1(u)E_{1n}^1(u)\tau_2\left(\widetilde{T}_{nr}^2(u)\right)
-H_1^1(u)E_{1n}^1(u)\tau_2\left(\widetilde{T}_{nr}^2(u)F_{r1}^2(u)\right)\right).
\end{align*}
By \eqref{eq:ct:FrHuu} and \eqref{eq:ct:FrTuu}, we obtain
\begin{align*}
\Lambda_0\otimes z(u)
=&\Lambda_0\otimes\psi_1\left(z_{n-1}(u)\right)\\
&-\sum\limits_{r=2}^n\left(\Lambda_0\otimes1\right)H_1^1(u)
\left[E_{1n}^1(u),\tau_2(F_{r1}^2(u))\right]\tau_2\left(\widetilde{T}_{nr}^2(u)\right).
\end{align*}

Note that $\left[H_1^1(u),\psi_1\left(\widetilde{\mathcal{T}}_{nr}(v)\right)\right]=0$ for $r\geqslant2$. The identity \eqref{eq:ct:EFr} can be written as
\begin{align*}
\left[E_{1n}^1(u), F_{r1}^2(v)\right]
=&\frac{1}{u-v}
\left(\widetilde{H}_1^1(u)P\psi_1\left(\mathcal{T}_{rn}^1(u)\right)
-\widetilde{H}_1^1(v)P\psi_1\left(\mathcal{T}_{rn}^1(v)\right)\right)\\
&+\frac{1}{u+v}
\left(\widetilde{H}_1^1(u)Q\psi_1\left(\mathcal{T}_{rn}^1(u)\right)
-\widetilde{H}_1^1(-v)Q\psi_1\left(\mathcal{T}_{rn}^1(-v)\right)\right).
\end{align*}
Applying $\tau$ to the second tensor factor, we deduce that
\begin{align*}
\left[E_{1n}^1(u), \tau_2\left(F_{r1}^2(v)\right)\right]
=&\frac{1}{u-v}
\left(\widetilde{H}_1^1(u)\Lambda_0\psi_1\left(\mathcal{T}_{rn}^1(u)\right)
-\widetilde{H}_1^1(v)\Lambda_0\psi_1\left(\mathcal{T}_{rn}^1(v)\right)\right)\\
&+\frac{1}{u+v}
\left(\widetilde{H}_1^1(u)\Theta_0\psi_1\left(\mathcal{T}_{rn}^1(u)\right)
-\widetilde{H}_1^1(-v)\Theta_0\psi_1\left(\mathcal{T}_{rn}^1(-v)\right)\right),
\end{align*}
where $\Theta_0=\sum\limits_{i,j\in I_{1|1}}
(-1)^{|i||j|+|i|+|j|}\varepsilon_{ij}\otimes\varepsilon_{-i,-j}$. 
Let $v\rightarrow u$, then we have
\begin{align*}
\left[E_{1n}^1(u), \tau_2\left(F_{r1}^2(u)\right)\right]
=&\partial\widetilde{H}_1^1(u)\Lambda_0\psi_1\left(\mathcal{T}_{rn}^1(u)\right)
+\widetilde{H}_1^1(u)\Lambda_0\psi_1\left(\partial\mathcal{T}_{rn}^1(u)\right)\\
&+\frac{1}{2u}
\left(\widetilde{H}_1^1(u)\Theta_0\psi_1\left(\mathcal{T}_{rn}^1(u)\right)
-\widetilde{H}_1^1(-u)\Theta_0\psi_1\left(\mathcal{T}_{rn}^1(-u)\right)\right).
\end{align*}

Hence,
\begin{align*}
\Lambda_0\otimes z(u)
=&\Lambda_0\otimes\psi_1\left(z_{n-1}(u)\right)\\
&-\sum\limits_{r=2}^n\left(\Lambda_0\otimes1\right)H_1^1(u)\partial\widetilde{H}_1^1(u)(\Lambda_0\otimes1)
\psi_1\left(\mathcal{T}_{rn}^1(u)\tau_2(\widetilde{\mathcal{T}}_{nr}^2(u))\right)\\
&-\sum\limits_{r=2}^n\left(\Lambda_0\otimes1\right)H_1^1(u)\widetilde{H}_1^1(u)(\Lambda_0\otimes1)
\psi_1\left(\mathcal{T}_{rn}^1(u)\tau_2(\widetilde{\mathcal{T}}_{nr}^2(u))\right)\\
&-\frac{1}{2u}\sum\limits_{r=2}^n\left(\Lambda_0\otimes1\right)H_1^1(u)\widetilde{H}_1^1(u)(\Theta_0\otimes1)
\psi_1\left(\mathcal{T}_{rn}^1(u)\tau_2(\widetilde{\mathcal{T}}_{nr}^2(u))\right)\\
&+\frac{1}{2u}\sum\limits_{r=2}^n\left(\Lambda_0\otimes1\right)H_1^1(u)\widetilde{H}_1^1(-u)(\Theta_0\otimes1)
\psi_1\left(\mathcal{T}_{rn}^1(-u)\tau_2(\widetilde{\mathcal{T}}_{nr}^2(u))\right).
\end{align*}
Now, $\widetilde{H}_1(u)$ is the inverse of $H_1(u)$. It follows from Lemma~\ref{lem:ct:lmdmt} that
\begin{align*}
\Lambda_0\otimes z(u)
=&\left(1-\mathrm{str}\left(H_1(u)\partial \widetilde{H}_1(u)\right)\right)\Lambda_0\otimes\psi_1\left(z_{n-1}(u)\right).
\end{align*}
Consequently, the identity \eqref{eq:center} can be obtained by induction on $n$.
\end{proof}

\begin{remark}
The subalgebra of $\Y(\mathfrak{q}_n)$ generated by the coefficients of all entries of $H_a(u)$ is isomorphic to $\Y(\mathfrak{q}_1)$. Nazarov introduced the quantum Berezinian $C_a(u)=h_a(u)\widetilde{h}_a(-u)$ for $\Y(\mathfrak{q}_1)$, see \cite[Section VIII]{N22} for more details. Furthermore, the central series of $\Y(\mathfrak{q}_1)$ can be expressed as
$$z_a(u)=1-\mathrm{str}(H_a(u)\partial H_a(u))=C_a(u)C_a(-u)-D_a(u)D_a(-u),$$
where $C_a(u)-C_a(-u)=4uD_a(u)$. By Theorem~\ref{thm:ct}, the central series $z(u)$ of $\Y(\mathfrak{q}_n)$ can be expressed as:
$$z(u)=\prod\limits_{a=1}^n\left(C_a(u)C_a(-u)-D_a(u)D_a(-u)\right).$$
\end{remark}

\bigskip

The authors have no conflicts of interest to declare that are relevant to this article.

\end{document}